\newtheorem{thm}{Theorem}[section]
\newtheorem{lem}[thm]{Lemma}
\newtheorem{cor}[thm]{Corollary}
\newtheorem{prop}[thm]{Proposition}
\newtheorem{ques}{Question}
\newtheorem{theorem}{Theorem}
\theoremstyle{definition}
\newtheorem{defn}[thm]{Definition}
\theoremstyle{remark}
\newtheorem{rem}[thm]{Remark}
\newtheorem{ex}[thm]{Example}
\def\P{\mathbb{P}}
\def\Z{\mathbb{Z}}
\def\G{\mathbb{G}}
\def\C{\mathcal{C}}
\def\O{\mathcal{O}}
\def\<{\langle}
\def\>{\rangle}
\DeclareMathOperator{\GL}{GL}
\DeclareMathOperator{\SL}{SL}
\DeclareMathOperator{\Hom}{Hom}
\DeclareMathOperator{\Aut}{Aut}
\DeclareMathOperator{\Gal}{Gal}
\DeclareMathOperator{\op}{op}
\DeclareMathOperator{\End}{End}
\DeclareMathOperator{\Spec}{Spec}
\DeclareMathOperator{\M}{M}
\DeclareMathOperator{\Pic}{Pic}
\DeclareMathOperator{\CDiv}{CDiv}
\DeclareMathOperator{\Br}{Br}
\DeclareMathOperator{\R}{R}
\DeclareMathOperator{\Bl}{Bl}
\DeclareMathOperator{\Res}{Res}
\DeclareMathOperator{\Ext}{Ext}
\DeclareMathOperator{\perf}{perf}
\title{Toric surfaces over an arbitrary field}
\author{Fei Xie}
\address{Fakult\"{a}t f\"{u}r Mathematik, Universit\"{a}t Bielefeld, Bielefeld, 33615, Germany}
\email{fxie@math.uni-bielefeld.de}
\subjclass[2010]{14M25, 14J20, 14L30, 11E72, 16H05, 16E35}
\keywords{toric variety, motivic category, separable algebra, exceptional collection}
\thanks{The author was supported in part by the NSF research grant DMS-\#1160206.}
\begin{document}
\maketitle
\begin{abstract}
We study toric varieties over an arbitrary field with an emphasis on toric surfaces in the Merkurjev-Panin motivic category of ``K-motives". We explore the decomposition of certain toric varieties as K-motives into products of central simple algebras, the geometric and topological information encoded in these central simple algebras, and the relationship between the decomposition of the K-motives and the semiorthogonal decomposition of the derived categories. We obtain the information mentioned above for toric surfaces by explicitly classifying all minimal smooth projective toric surfaces using toric geometry.
\end{abstract}
\section{Introduction}
Throughout, we fix an arbitrary base field $k$. Let $X$ be a scheme over $k$ and let $K/k$ be a field extension. We say a scheme $Y$ over $k$ is a $K/k$-\textit{form} of $X$ if the schemes $X_K:= X\otimes_k K$ and $Y_K$ are isomorphic as schemes over $K$ \cite[Chapter III $\S 1$]{galoiscoh}. Let $k^s$ be the separable closure of $k$. A $k^s/k$-form is simply called a \textit{form} or \textit{twisted form}. The scheme $X_{k^s}$ has a natural $\Gamma=\Gal(k^s/k)$-action.

We will focus on the study of toric varieties over $k$. Let $X$ be a normal geometrically irreducible variety over $k$ and let $T$ be an algebraic torus acting on $X$ over $k$. The variety $X$ is a toric $T$-variety if there is an open orbit $U$ such that $U$ is a principal homogeneous space or torsor over $T$. A toric $T$-variety is called split if the torus $T$ is split. The case of split toric varieties have been extensively studied, for example in \cite{dantoric}\cite{ftoric}\cite{cox}. Since any toric variety $X$ has a torus action over $k$ and is a twisted form of a split toric variety, the study of $X$ is equivalent to the study of the split toric variety $X_{k^s}$ with a $\Gamma$-action on the fan structure as well as the study of the open orbit $U$, see \S\ref{tv}. 

In \cite{isksurf}, Iskovskih classified minimal rational surfaces over arbitrary fields. Focusing on the cases of toric surfaces, we give an explicit description of minimal toric surfaces via toric geometry. In addition, the explicit nature of the classification of minimal toric surfaces made it possible for us to fully understand toric surfaces in aspects such as affirming Merkurjev-Panin's question (Question \ref{ques1}) in dimension $2$, decomposing toric surfaces as K-motives into products of central simple algebras, and providing full exceptional collections for the derived categories of toric surfaces, etc. 
\begin{theorem}[Theorem \ref{ms}]
The surface $X$ is a minimal smooth projective toric surface if and only if X is \hypertarget{i}{(i)} a $\P^1$-bundle over a smooth conic curve but not a form of $F_1=\text{Proj}(\O_{\P^1}\oplus\O_{\P^1}(1))$; \hypertarget{ii}{(ii)} the Severi-Brauer surface; \hypertarget{iii}{(iii)} an involution surface; \hypertarget{iv}{(iv)} the del Pezzo surface of degree $6$ with Picard rank $1$.
\end{theorem}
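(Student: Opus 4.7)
I would pass to $k^s$ and translate the statement into a combinatorial problem on the smooth complete fan $\Sigma$ of $X_{k^s}$ in $N_{\mathbb{R}} = \mathbb{R}^2$, equipped with the $\Gamma$-action on the cocharacter lattice $N$ that permutes the cyclically ordered rays $v_1,\dots,v_n$. Writing the standard relations $v_{i-1}+v_{i+1}=a_i v_i$, the torus-invariant divisor $D_i$ has self-intersection $-a_i$, and $\sum_i a_i = 3n-12$. A $(-1)$-curve among the $D_i$ corresponds to $a_i=1$, and blowing down a $\Gamma$-orbit $O$ of such rays produces another smooth projective toric $k$-surface precisely when the fan obtained by removing $O$ is still smooth and complete. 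Hence $X$ is minimal over $k$ iff no such $\Gamma$-orbit exists, and I would proceed by case analysis on $n = \rho(X_{k^s})+2$.

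For $n=3$, $X_{k^s} = \P^2$ and $X$ is a Severi-Brauer surface (case (ii)), necessarily minimal. For $n=4$, $X_{k^s}$ is a Hirzebruch surface $F_a$. The case $a=1$ is excluded since the unique $(-1)$-section is $\Gamma$-fixed and blows down over $k$. For $a\geq 2$ the negative section distinguishes the two rulings, so $\Gamma$ preserves the $\P^1$-fibration to a smooth conic $C$, giving case (i). For $a=0$, either $\Gamma$ preserves each ruling (case (i)) or swaps them, yielding an involution surface (case (iii)).

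For $n\geq 5$, I would enumerate the admissible $a$-patterns up to the dihedral symmetry of the cyclic sequence $v_1,\dots,v_n$, and for each pattern identify the full combinatorial symmetry subgroup of $\GL_2(\Z)$ preserving $\Sigma$. For each candidate $\Gamma$-subgroup I would verify that either a fixed $(-1)$-curve exists or some non-trivial $\Gamma$-orbit of $(-1)$-curves simultaneously contracts to a smooth complete fan. The claim is that the only configuration admitting no equivariant contraction is the symmetric hexagon at $n=6$ (the split del Pezzo surface of degree $6$) equipped with a $\Gamma$-action that is transitive on $\Sigma(1)$ and preserves neither of the two alternating triples whose simultaneous contraction would give $\P^2$; by the exact sequence $0\to M\to \Z^n\to \Pic(X_{k^s})\to 0$ this is equivalent to $\Pic(X)=\Z$, producing case (iv).

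\textbf{Main obstacle.} The hard step is the analysis for $n\geq 5$. The principal subtlety is tracking how removing a $\Gamma$-orbit of rays updates the $a$-values of the neighboring rays, so as to verify that smoothness of the remaining fan is preserved; this must be combined with a systematic inspection of finite subgroups of $\GL_2(\Z)$ preserving each admissible fan, showing that outside the hexagonal $(1,1,1,1,1,1)$ pattern every such subgroup must leave invariant some contractible set of $(-1)$-curves. I expect the most delicate sub-case to be separating the genuinely minimal del Pezzo hexagon of Picard rank one from its non-minimal twists—for example those where $\Gamma$ contains the $180^{\circ}$-rotation $-I$ and hence three contractible pairs of opposite rays, or the order-$3$ rotation that preserves the two alternating triples and contracts one to a Severi-Brauer surface.
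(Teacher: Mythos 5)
Your combinatorial translation (Galois action on the fan; minimality iff no $\Gamma$-invariant configuration of disjoint $(-1)$-rays whose deletion leaves a smooth complete fan) is essentially the paper's Lemma~\ref{minsurf}, and your treatment of $n=3,4$ lines up with the paper's Lemma~\ref{cb} and the identification of forms of $\P^2$ and $\P^1\times\P^1$. Up to that point the approach is sound.

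The gap is in the $n\geq 5$ step, which you correctly flag as the hard part but do not carry out, and for which your plan as stated does not obviously terminate. You propose to ``enumerate the admissible $a$-patterns up to dihedral symmetry'' for each $n\geq 5$, but you give no a priori bound on $n$, and for a general finite $G\subseteq\GL(2,\Z)$ the list of smooth complete $G$-fans is infinite (e.g.\ the $F_a$ family for $G$ generated by a single reflection). The paper resolves this by reversing the order of quantifiers: it fixes $G$ first, using the classical fact that there are exactly $13$ conjugacy classes of finite subgroups of $\GL(2,\Z)$ (Table~\ref{table:subgroups}), and then for each $G$ it bounds or pins down the fan. The key technical input you are missing is the paper's Lemma~\ref{blow down}: if $G\cap\SL(2,\Z)=C_t$ with $t\geq 2$, the rotation by a generator of $C_t$ sends any ray $y_i$ with $a_i=-1$ to a nonadjacent ray once the number of rays exceeds $\max\{4,t\}$, producing a $C_t$-invariant set of pairwise disjoint $(-1)$-curves; this caps $n\leq 6$. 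For the remaining groups $D_2,D_2'$ (no nontrivial rotation) the paper uses Oda's local criterion (Lemma~\ref{odalem}) to show the fan must already be that of $\P^2$, $\P^1\times\P^1$, or $F_a$. Without something playing the role of Lemma~\ref{blow down}, your ``claim'' that only the symmetric hexagon survives at $n\geq 5$ is unproved, and this claim is precisely the content of Proposition~\ref{surface}.

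Two smaller points. First, your identification of case (iv) with ``$\Pic(X)=\Z$'' via the sequence $0\to M\to\Z^n\to\Pic(X_{k^s})\to 0$ conflates $\Pic(X)$ with $\Pic(X_{k^s})^{\Gamma}$; the paper instead cites the standard fact that a minimal del Pezzo surface of degree $\neq 8$ has Picard rank $1$. Second, in the $n=4$, $a=0$ case, both sub-cases (rulings preserved or swapped) are involution surfaces in the paper's sense, so the alternative you draw is not a dichotomy of the theorem's cases (i)/(iii) but an overlap; this is harmless but worth stating precisely.
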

This paper is motivated by ideas in \cite{mp}, which studies toric varieties over an arbitrary field in the motivic category $\C$ defined in \textit{loc. cit.}, and in particular by the following question:
\begin{ques}\label{ques1}
If $X$ is a smooth projective toric variety over $k$, is $K_0(X_{k^s})$ always a permutation $\Gamma$-module?
\end{ques}
\begin{defn}
A $\Gamma$-module $M$ is a \textit{permutation $\Gamma$-module} if there exists a $\Gamma$-invariant $\Z$-basis of $M$. We call such a basis a \textit{permutation $\Gamma$-basis} or \textit{$\Gamma$-basis}.
\end{defn}
The reason that we care about the $\Gamma$-action on $K_0(X_{k^s})$ is that it in some way determines $X$, see \S\ref{sa}. For example, if $X$ has a rational point and $K_0(X_{k^s})$ is a permutation $\Gamma$-module, then $X$ is isomorphic to the \'{e}tale algebra corresponding to any $\Gamma$-basis of $K_0(X_{k^s})$ in the motivic category $\C$ \cite[Proposition 4.5]{mp}. In general, if $K_0(X_{k^s})$ has a permutation $\Gamma$-basis of line bundles over $X_{k^s}$, then the variety $X$ decomposes into a finite product of central simple algebras (over separable field extensions of $k$) in the motivic category $\C$ completely described by this $\Gamma$-basis as follows:
\begin{theorem}[Theorem \ref{basis}]\label{introbasis}
Let $X$ be a smooth projective toric $T$-variety over $k$ that splits over $l$ and $G=\Gal(l/k)$. Assume $K_0(X_l)$ has a permutation $G$-basis $P$ of line bundles on $X_l$. Let $\{P_i\}_{i=1}^t$ be $G$-orbits of $P$, and let $\pi: X_l\to X$ be the projection. For any $S_i\in P_i$, set $B_i=\End_{\O_X}(\pi_*(S_i))$ and $B=\prod_{i=1}^t B_i$. Then the map $u=\bigoplus_{i=1}^t \pi_*(S_i): X\to B$ gives an isomorphism in the motivic category $\C$.
\end{theorem}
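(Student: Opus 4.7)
The plan is to reduce the claim to the case where $X$ is split over $l$ and then descend. The key observation is that the data defining $u$---the pushforwards $\pi_*(S_i)$ together with their endomorphism algebras $B_i$---becomes entirely combinatorial over $l$, governed by the orbit decomposition $P=\coprod_i P_i$.

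First I would verify that $u$ is a well-defined morphism in $\C$. Each $\pi_*(S_i)$ is a locally free $\O_X$-module carrying a canonical right action of $B_i=\End_{\O_X}(\pi_*(S_i))$, and such an $(\O_X,B_i)$-bimodule is precisely the datum of a morphism $X\to B_i$ in the Merkurjev--Panin category. Taking the direct sum over $i$ yields $u:X\to B$.

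Next I would base change to $l$ and describe $u_l$ explicitly. Since $\pi:X_l\to X$ is a Galois cover with group $G$, its base change $\pi_l:X_l\otimes_k l\to X_l$ is a trivial $G$-cover isomorphic to $\coprod_{g\in G}X_l\to X_l$, so by flat base change
\[
\pi_*(S_i)\otimes_k l \;\cong\; \bigoplus_{g\in G}g^*S_i,
\]
which as a direct sum of line bundles on $X_l$ is a multiple of $\bigoplus_{S\in P_i}S$. Consequently $B_i\otimes_k l$ is the endomorphism algebra of this locally free sheaf, and in $\C_l$ (up to Morita equivalence) is the split \'etale $l$-algebra with one factor per element of $P_i$. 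Summing over $i$, the morphism $u_l$ in $\C_l$ becomes the map $X_l\to l^{|P|}$ induced by the collection of line bundles in $P$.

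Under the hypothesis that $P$ is a $\Z$-basis of $K_0(X_l)$ consisting of line bundles, I would then invoke the split-case analogue---essentially \cite[Proposition 4.5]{mp}---which says that a line-bundle basis of $K_0(X_l)$ yields an isomorphism in $\C_l$ between the split toric variety $X_l$ and the corresponding split \'etale algebra. Finally, since the whole construction is $G$-equivariant (the $G$-action on $P$ permutes the factors within each orbit $P_i$, matching the natural action on $B\otimes_k l$), the isomorphism $u_l$ descends to an isomorphism $u$ in $\C$ by the Galois descent properties of $\C$. The main obstacle I anticipate is the bookkeeping of Morita equivalences and $G$-equivariant structures in the base change step, together with the invocation of descent---which requires knowing that $\C$ satisfies descent for finite Galois extensions, a property that must be located in the foundational setup of \cite{mp}.
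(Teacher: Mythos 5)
Your local computation over $l$ is essentially correct: $\pi_*(S_i)\otimes_k l\cong\bigoplus_{g\in G}g^*S_i$, and using Lemma \ref{vanish} one sees that $B_i\otimes_k l$ is Morita equivalent to $l^{|P_i|}$, so that $u_l$ is, up to Morita equivalence, the morphism $X_l\to l^{|P|}$ determined by the line-bundle basis, which is an isomorphism in $\C_l$ by the split case of \cite[Proposition 4.5]{mp}.

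The gap is your final step: you claim $u_l$ being an isomorphism in $\C_l$ forces $u$ to be an isomorphism in $\C_k$ ``by the Galois descent properties of $\C$.'' No such property is available, and it is in fact false in the generality you need. Morphisms in $\C_k$ are elements of $K_0(X\times Y, A^{\op}\otimes_k B)$, and the base change map $K_0(X\times Y, A^{\op}\otimes_k B)\to K_0((X\times Y)_l,(A^{\op}\otimes_k B)_l)^G$ is in general neither surjective (so the inverse $v'$ of $u_l$ need not descend to a candidate inverse $v$) nor injective (so even if you found $v$ with $v_l=v'$ you could not conclude $u\circ v=1$ and $v\circ u=1$ from the equalities over $l$). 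The Severi--Brauer example makes the problem vivid: $K_0(\text{SB}(A))\hookrightarrow K_0(\P^n)^\Gamma$ is a proper inclusion with cokernel controlled by the orders of $[A^{\otimes i}]$ in the Brauer group, so morphisms over $l$ simply do not descend to $k$ in general. This failure is precisely why $\text{SB}(A)\not\cong k^{n+1}$ in $\C$ even though $\P^n\cong l^{n+1}$ in $\C_l$. Your $u$ does encode the right twist data through $\pi_*(S_i)$, but the proof that it is invertible cannot be reduced to its behaviour over $l$.

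The paper sidesteps this by passing through the associated toric model $X^*$, which has a rational point. For $X^*$, one \emph{does} have $K_0(X^*)\cong K_0(X^*_{k^s})^\Gamma$ (\cite[Corollary 5.8]{mp}), and that, together with the splitting principle of \cite[Proposition 6.1]{mp}, is what makes the lemma following Lemma \ref{lb} work. The isomorphism $X^*\cong A$ represented by a line bundle $Q\in\Pic(X^*_A)$ is then fed into the twisting construction (\ref{endo}), and the machinery of \cite[Theorem 7.6, Proposition 7.9]{mp} is what produces an isomorphism $X\to B'$ in $\C$; the remaining content of the paper's proof of Theorem \ref{basis} is the identification $B'\cong B$ via Lemma \ref{vanish} and ordinary Galois descent for finite-dimensional algebras. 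So the route through $X^*$ and the explicit twist of a line-bundle-represented isomorphism is not a stylistic choice but the mechanism that replaces the descent you would need; without it, the argument does not close.
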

Using the classification of minimal toric surfaces, we obtain that any smooth projective toric surface satisfies the conditions of the above theorem:
\begin{theorem}[Theorem \ref{K0-surface}]
Let $X$ be a smooth projective toric $T$-surface over $k$ that splits over $l$ and $G=\Gal(l/k)$. Then $K_0(X_l)$ has a permutation $G$-basis of line bundles on $X_l$.
\end{theorem}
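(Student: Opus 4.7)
The plan is to combine the classification of minimal smooth projective toric surfaces from the previous theorem with a reduction via $G$-equivariant blowups. Every smooth projective toric $T$-surface is obtained from a toric-minimal one by a sequence of $G$-equivariant blowups centered at $G$-orbits of torus-invariant closed points (at the level of fans, each step inserts a $G$-orbit of rays). It therefore suffices to check (a) that a permutation $G$-basis of line bundles extends across such a blowup, and (b) that one exists on each of the four minimal types in the classification.

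For (a), let $\pi\colon X' \to X$ be the blowup along a $G$-orbit of $l$-points $\{p_1,\ldots,p_r\}$ with $G$-permuted exceptional divisors $E_1,\ldots,E_r$. Orlov's blowup formula yields
\[
K_0(X'_l) = \pi^*K_0(X_l) \oplus \bigoplus_{i=1}^r \Z \cdot [\O_{E_i}].
\]
Via $[\O_{E_i}] = [\O_{X'_l}] - [\O_{X'_l}(-E_i)]$, the line bundle classes $[\O_{X'_l}(-E_i)]$ give a $G$-equivariant upper-triangular change of basis. Thus starting from any permutation $G$-basis of line bundles $\{b_j\}$ for $K_0(X_l)$, the set $\{\pi^*b_j\} \cup \{[\O_{X'_l}(-E_i)]\}_{i=1}^r$ is a permutation $G$-basis of line bundles for $K_0(X'_l)$.

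For (b), the easy cases are \hyperlink{ii}{(ii)} and \hyperlink{iii}{(iii)}: for a Severi-Brauer surface $X_l \cong \P^2$, the $G$-action on $\Pic$ is trivial, so $\{\O, \O(1), \O(2)\}$ is a (trivially permuted) basis; for an involution surface $X_l \cong \P^1 \times \P^1$, the set $\{\O, \O(1,0), \O(0,1), \O(1,1)\}$ is permuted by any factor-swap. In case \hyperlink{i}{(i)}, both the fiber class $f$ of the projection $X \to C$ and a section class $s$ (e.g.\ the relative $\O(1)$, or the unique negative section when $X_l \cong F_n$ with $n \geq 2$) are $G$-invariant because the $\P^1$-bundle structure is defined over $k$, so $\{\O, \O(f), \O(s), \O(f+s)\}$ is a $G$-invariant basis.

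The main obstacle is case \hyperlink{iv}{(iv)}, the del Pezzo surface of degree $6$ with Picard rank $1$. Over $l$ the six $(-1)$-curves form a hexagon and $\Pic(X_l)$ has rank $4$; the condition of Picard rank $1$ on descent forces the image of $G$ in the hexagon symmetry group to be one of a short list of ``large'' subgroups (those having one-dimensional invariants on the hexagon lattice). My approach is to enumerate these admissible $G$-actions and, for each, exhibit an explicit $\Z$-basis of $K_0(X_l)$ of line bundles built from $\O$ together with line bundles associated to $G$-orbit sums of the torus-invariant divisors, then verify that the chosen set is both a basis and $G$-permuted. This combinatorial case analysis for the admissible actions on the hexagon is where the bulk of the work lies.
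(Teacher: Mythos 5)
Your overall strategy is the same as the paper's: reduce to minimal surfaces via equivariant blow-ups (each blow-up step replaces the old permutation basis by its pullback together with the line bundles associated to the new exceptional divisors), then check the four minimal types case by case. Your blow-up step and cases \hyperlink{i}{(i)}--\hyperlink{iii}{(iii)} are sound. (One small point in the blow-up step: the standard complementary generators are the classes $[\O_{E_i}(-1)]$, which equal $[\O_{X'_l}(E_i)]-[\O_{X'_l}]$, so the clean choice is $\{\pi^*b_j\}\cup\{\O_{X'_l}(E_i)\}$; your variant with $\O_{X'_l}(-E_i)$ uses a different change of basis and needs a word of justification, but this is minor.)

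The genuine gap is case \hyperlink{iv}{(iv)}. You explicitly defer it (``the bulk of the work lies'' there) and never produce the required basis, so as written the proof is incomplete precisely at the only nontrivial case. The paper resolves this by exhibiting a permutation basis invariant under the \emph{full} fan automorphism group $\Aut_\Sigma\cong D_{12}$ of the split degree-$6$ del Pezzo surface, namely $\{1,R_1,R_2,R_3,Q_1,Q_2\}$ with $R_1=J_1J_2'$, $R_2=J_2J_3'$, $R_3=J_3J_1'$, $Q_1=J_1J_2J_3'$, $Q_2=J_1'J_2'J_3$ in the notation of the proof (this is Blunk's Theorem~4.2). Proving $\Aut_\Sigma$-invariance at once disposes of all subgroups $G\subseteq\Aut_\Sigma$, so the paper also sidesteps your proposed enumeration of ``admissible'' $G$-actions, which is unnecessary (and also slightly mis-framed: after the blow-down reduction you must handle exactly the $G$-minimal actions on $S$, i.e.\ $G\in\{C_6,D_6',D_{12}\}$ per the classification, but you do not need to restrict by Picard-rank computations if you simply produce a $D_{12}$-invariant basis). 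To complete your argument, either import Blunk's basis or carry out your promised combinatorial verification; without one of these the proposal does not prove the theorem.
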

The original motivation for finding the decomposition of a smooth projective variety over $k$ into a product of central simple algebras in $\C$ is to compute higher algebraic K-theory of the variety. Quillen \cite{quillen} computed higher algebraic K-theory for Severi-Brauer varieties, see Example \ref{sb}, and Swan \cite{swanqdric} for quadric hypersurfaces. Panin \cite{pflag} generalized their results by finding the decomposition in $\C$ for twisted flag varieties. 

As a matter of fact, these central simple algebras also encode arithmetic/geometric information about the variety, and in nice cases, classify its twisted forms. Blunk investigated del Pezzo surfaces of degree $6$ over $k$ in \cite{mb} in this direction, see Example \ref{dp}. He showed that a del Pezzo surface of degree $6$ is determined by a pair of Azumaya algebras (over \'{e}tale quadratic and cubic extensions of the base field, respectively) and the surface has a rational point if and only if both Azumaya algebras in the pair are split. We will investigate the same information for all smooth projective toric surfaces over $k$, see $\S$\ref{sa-surface}. For example, we obtain that a $\P^1$-bundle over a smooth conic curve is isomorphic to $k\times Q\times k\times Q$ in $\C$ and the surface is determined by the quaternion algebra $Q$ corresponding to the conic curve. More generally, if the Picard group $\Pic(X_{k^s})$ of a smooth projective toric variety $X$ is a permutation $\Gamma$-module, then the open orbit $U$ is determined by a set of central simple algebras, each corresponding to a $\Gamma$-orbit of $\Pic(X_{k^s})$, see Corollary \ref{pic}. This implies that the toric variety $X$ has a rational point if and only if every central simple algebra in the set is split. 

Moreover, since Tabuada \cite[Theorem 6.10]{tamp} showed that the motivic category $\C$ is a part of the category of noncommutative motives $Hmo_0$, it implies that certain semiorthogonal decompositions of the derived category of a smooth projective variety will give a decomposition of the variety in $\C$ (Theorem \ref{derivedtomotivic}).

We will briefly discuss the possibility of lifting the motivic decomposition of a smooth projective toric variety to the derived category, see \S\ref{dcat}. By the classification of minimal toric surfaces and known results of semiorthogonal decomposition of rational surfaces, we can confirm the lifting for smooth projective toric surfaces. 
\begin{theorem}[Theorem \ref{dersurf}]
Let $X$ be a smooth projective toric surface over $k$ that splits over $l$ and $G=\Gal(l/k)$. Then $K_0(X_l)$ has a permutation $G$-basis $P$ of line bundles over $X_l$ such that each $G$-orbit is an exceptional block. Furthermore, there exists an ordering of the $G$-orbits $\{P_i\}_{i=1}^t$ of $P$ such that $\{P_1,\dots, P_t\}$ gives a full exceptional collection of $D^b(X_l)$. Therefore, for any $S_i\in P_i$, $\{\pi_*S_1$, $\dots, \pi_*S_t\}$ is a full exceptional collection of $D^b(X)$ where $\pi: X_l\to X$ is the projection.
\end{theorem}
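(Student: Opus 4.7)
My plan is to induct on the number of torus-equivariant blow-downs from $X$ to a minimal smooth projective toric surface, taking Theorem \ref{ms} as the base case. Because the blow-ups are $T$-equivariant, the centers are automatically $G$-stable finite collections of $T$-fixed $k^s$-points, and the resulting exceptional divisors fall into $G$-orbits of mutually disjoint $(-1)$-curves over $l$.

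For the base case I would exhibit, for each of the four minimal types, a $G$-stable full exceptional collection of line bundles on $X_l$ in which every $G$-orbit is an exceptional block. On the Severi-Brauer surface (ii), $X_l \cong \P^2_l$ and $\{\O, \O(1), \O(2)\}$ works with three singleton orbits since Galois fixes each $\O(n)$. For (i) and (iii), $X_l$ is $\P^1 \times \P^1$ or a Hirzebruch surface, the collection $\{\O, \O(1,0), \O(0,1), \O(1,1)\}$ is stabilized as a set by the possible swap-the-rulings action of $G$, and the block $\{\O(1,0), \O(0,1)\}$ is exceptional by the K\"unneth vanishing $H^*(\P^1 \times \P^1, \O(-1,1)) = 0$. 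For (iv), the split del Pezzo surface of degree $6$, the six torus-invariant prime divisors decompose into two $G$-stable triples matching the two rulings of the hexagonal fan, and together with $\O$ and an anticanonical twist they yield the desired collection by a direct cohomology computation on the toric model.

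For the inductive step, let $\sigma: X' \to X$ be a $T$-equivariant blow-up along a $G$-orbit of closed points with exceptional divisor $E = \bigsqcup_\alpha E_\alpha$. Orlov's blow-up formula gives
\[
D^b(X'_l) = \langle \sigma^* D^b(X_l), \; \O_{E_\alpha}(-1) \rangle,
\]
and a standard mutation replaces each $\O_{E_\alpha}(-1)$ by a line bundle on $X'_l$. The $\{E_\alpha\}$ form a single $G$-orbit, and since they are pairwise disjoint, the corresponding line bundles are mutually $\Ext$-orthogonal; we append this new block to the ordered collection pulled back from $X_l$.

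Finally, to descend from $D^b(X_l)$ to $D^b(X)$, we use that $\pi: X_l \to X$ is a finite \'etale Galois cover. Adjunction, together with the identification $\pi^* \pi_* S_i \cong \bigoplus_g g^* S_i$ over $l$ and the block-orthogonality of the $P_i$, yields $\Ext^*_X(\pi_* S_i, \pi_* S_j) = 0$ for $i < j$ in the chosen ordering, while $\End_X(\pi_* S_i)$ is a separable algebra concentrated in degree zero; fullness on $X$ follows from fullness on $X_l$ by faithfully flat base change. The main obstacle is case (iv) of the base step: choosing representatives among the many torus-invariant line bundles on the split del Pezzo of degree $6$ whose $G$-orbits simultaneously form exceptional blocks and admit a compatible ordering into a full exceptional collection requires careful bookkeeping with the toric fan and the $G$-action on $\Pic$.
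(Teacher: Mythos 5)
Your overall strategy matches the paper's proof: reduce to the minimal surfaces of Theorem \ref{ms}, exhibit an explicit Galois-stable full exceptional collection of line bundles in each minimal case, handle blow-ups by Orlov's formula plus a mutation that turns $\O_{E_\alpha}(-1)$ into $\O(E_\alpha)$, and descend along the finite flat projection $\pi: X_l \to X$ using $\pi^*\pi_* S_i \cong \bigoplus_g g^* S_i$. The descent step you describe is exactly Lemma \ref{ordering}.

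However, there is a genuine gap in your base case for the del Pezzo surface of degree $6$, which you yourself flag as ``the main obstacle.'' The collection you propose --- $\O$, ``an anticanonical twist,'' and the two $G$-stable triples of torus-invariant prime divisors --- does not give the right objects. The rank of $K_0$ here is $6$, and the actual collection used in the paper (following Blunk, and \cite[Proposition 9.1]{aberdel}) is $\{\O, R_1^\vee, R_2^\vee, R_3^\vee, Q_1^\vee, Q_2^\vee\}$ where $R_i = J_i J_{i+1}'$ are products of divisors from \emph{both} triples and $Q_1 = J_1J_2J_3'$, $Q_2 = J_1'J_2'J_3$ are triple products. These are precisely the line bundles for which the Galois orbits $\{R_i^\vee\}$ and $\{Q_j^\vee\}$ form exceptional blocks; the raw torus-invariant prime divisors $D_1, D_2, D_3$ do not, so your ``direct cohomology computation'' would fail without first identifying the correct twists. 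There is also a small confusion in case (i): for a Hirzebruch surface $F_a$ with $a \geq 2$, the fan automorphism group $S_2$ exchanges the two fibers (rays $(1,0)$ and $(-1,a)$), not ``the rulings,'' and since $J_1 = J_3$ in $K_0$ this action is trivial on the permutation basis $\{1, J_1, J_2, J_1J_2\}$ --- all four orbits are singletons. Nonsplit forms still arise because the line bundles, though Galois-fixed in $\Pic$, need not descend, which is a different mechanism from the ruling swap on $\P^1 \times \P^1$ in case (iii).
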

\subsection{Organization}
The organization of the paper is as follows: 

Sections \ref{mc} and \ref{tv} introduce the background on the motivic category $\C$ and toric varieties over $k$, including some basic facts and examples needed for the paper. For more details about $\C$, see \cite[$\S1$]{mp} or \cite[$\S3$]{mequiv}. Section \ref{mts} classifies minimal smooth projective toric surfaces over $k$ via toric geometry. Section \ref{k0s} verifies that $K_0(X_{k^s})$ has a permutation $\Gamma$-basis of line bundles for toric surfaces. In section \ref{sa}, we consider smooth projective toric varieties $X$ of all dimensions where $K_0(X_{k^s})$ has a permutation $\Gamma$-basis of line bundles. We decompose such $X$ into a product of central simple algebras in the motivic category by reinterpreting the construction of the separable algebra corresponding to a toric variety investigated in \cite{mp}. In section \ref{sa-surface}, we apply the construction in $\S$\ref{sa} to toric surfaces. Moreover, we relate the constructed algebras to the open orbit $U$ via Galois cohomology. For details on Galois cohomology, see \cite{galoiscoh}\cite{boi}\cite{csa}. In section \ref{dcat}, we discuss the relationship between the semiorthogonal decomposition of the derived category and the motivic decomposition of toric varieties via noncommutative motives and descent theory for derived categories.

Most of the time, instead of working with $X_{k^s}$ and $\Gamma$-action, we work with $X_l$ and $G=\Gal(l/k)$-action where $l$ is the splitting field of the torus $T$.
\subsection{Notation}
Fix the base field $k$ and a separable closure $k^s$ of $k$. Let $\Gamma=\Gal(k^s/k)$. Let $T$ denote an algebraic torus over $k$ with splitting field $l$ and $G=\Gal(l/k)$ unless otherwise stated. For any object $Z$ (algebraic groups, varieties, algebras, maps) over $k$ and any extension $K/k$, write $Z\otimes_k K$ as $Z_K$. 

For a split toric variety $Y$, we denote $\Sigma$ the fan structure and $\Aut_{\Sigma}$ the group of fan automorphisms. We will freely use the same notation for the ray in the fan, the minimal generator of the ray in the lattice and the Weil divisor corresponding to the ray when the context is clear.

For an algebra $A$, denote $A^{\op}$ its opposite algebra. Denote $S_n$ the permutation group of a set of $n$ elements.
\subsection{Acknowledgements}
I would like to thank my advisor, Christian Haesemeyer, for proposing this question and for his help, and to thank Alexander Merkurjev for useful conversations. I also want to thank the referees for numerous helpful comments and suggestions. Lastly, I want to thank Patrick McFaddin for pointing out a mistake in an earlier version. This paper is a version of my Ph.D. thesis at University of California, Los Angeles.
\section{The Motivic Category \texorpdfstring{$\C$}{C}}\label{mc}
\begin{defn} The \textit{motivic category} $\C=\C_k$ over a field $k$ has:
\begin{itemize}
\item Objects: Pairs $(X,A)$ where $X$ is a smooth projective variety over $k$ and $A$ is a finite separable $k$-algebra
\item Morphisms: $\Hom_{\C}((X,A),(Y,B))=K_0(X\times Y, A^{\op}\otimes_k B)$
\end{itemize}
\end{defn}
The Grothendieck group $K_0$ of a pair is defined below. A $k$-algebra $A$ is \textit{finite separable} if $\dim_k(A)$ is finite and for any field extension $K$ of $k$, the $K$-algebra $A_K$ is semisimple. Equivalently we have:
\begin{defn}\label{separable}
The algebra $A$ is a \textit{finite separable $k$-algebra} if it is a finite product of central simple $l_i$-algebras $A_i$ where $l_i$ is a finite separable field extension of $k$, i.e, $A_i$ is a matrix algebra over a finite dimensional division algebra with center $l_i$.
\end{defn}
Let $u: (X,A)\to (Y,B)$ and $v: (Y,B)\to (Z,C)$ be morphisms in $\C$. Since $u\in K_0(X\times Y, A^{\op}\otimes_k B)\cong K_0(Y\times X, B\otimes_k A^{\op})$, the map $u$ can also be viewed as $u^{\op}: (Y,B^{\op})\to (X,A^{\op})$. The \textit{composition} $v\circ u: (X,A)\to (Z,C)$ is given by  
\begin{equation*}
\pi_*(q^*v \otimes_B p^*u)
\end{equation*}
where $p: X\times Y\times Z\to X\times Y$, $q: X\times Y\times Z\to Y\times Z$, $\pi: X\times Y\times Z\to X\times Z$ are projections.

We write $X$ for $(X,k)$ and $A$ for $(\Spec k, A)$. Since the morphisms are defined in $K_0$, the category is also called the \textit{category of K-correspondences}. 
\subsection{Algebraic K-theory of a pair}
The algebraic K-theory of a pair $(X,A)$ is defined in the following way and it generalizes the Quillen K-theory of varieties:

Let $\mathcal{P}(X,A)$ be the exact category of left $\O_X\otimes_k A$-modules which are locally free $\O_X$-modules of finite rank and morphisms of $\O_X\otimes_k A$-modules. The group $K_n(X,A)$ of the pair $(X,A)$ is defined as $K_n^Q(\mathcal{P}(X,A))$, the Quillen $K$-theory of $\mathcal{P}$. Let $\mathcal{M}(X,A)$ be the exact category of left $\O_X\otimes_k A$-modules which are coherent $\O_X$-modules and morphisms of $\O_X\otimes_k A$-modules. The group $K_n'(X,A)$ of the pair $(X,A)$ is defined as $K_n^Q(\mathcal{M}(X,A))$. The embedding $\mathcal{P}\subset\mathcal{M}$ induces a map $K_n(X,A)\to K_n'(X,A)$ and it is an isomorphism if $X$ is regular (resolution theorem). Note that $K_n(X,k)$ is the usual $K_n(X)$ and $K_n(\Spec k, A)=K_n(\text{Rep}(A))$ is the $K$-theory of representations of $A$.

In fact, $K_n$ defines a functor $K_n: \C\to\textit{Ab}$ which sends $(X,A)$ to $K_n(X,A)$. For $u:(X,A)\to(Y,B)$, $x\in K_n(X,A)$, we can define 
\begin{equation*}
K_n(u)(x)=q_*(u\otimes_A p^*x)
\end{equation*}
where $p:X\times Y\to X, q:X\times Y\to Y$ are projections. 

Similarly we can define, for any variety $V$ over $k$, a functor $K_n^V:\C\to\textit{Ab}$ where on objects $K_n^V(X,A)=K_n'(V\times X,A)$.
\begin{ex}{\cite[Example 1.6(1)]{mp}}
$\M_n(k)\cong k$ in $\C$.
\end{ex}
\begin{ex}{\cite[Example 1.6(3)]{mp}, see also \cite[Theorem 9.1]{tamp}.}
Let $A$ and $B$ be two central simple $k$-algebras. Then $A\cong B$ in $\C$ if and only if $[A]=[B]\in\Br(k)$.
\end{ex}
\begin{proof}
The previous example indicates that Brauer equivalences give isomorphisms in $\C$. So $[A]=[B]\in\Br(k)$ implies $A\cong B$ in $\C$. 

For the opposite direction, since each central simple $k$-algebra is Brauer equivalent to a unique division $k$-algebra, we can assume $A, B$ are division algebras. Let $M: A\to B$ and $N: B\to A$ be inverse maps in $\C$. Since $K_0(A^{\op}\otimes_k B)\cong\Z R$ and $K_0(B^{\op}\otimes_k A)\cong\Z R^{\op}$ for $R$ the unique simple $B$-$A$-bimodule, we have $M=nR$ and $N=mR^{\op}$ for some $m,n\in\Z$. $N\circ M=N\otimes_B M\cong mn\, R^{\op}\otimes_B R\cong A$, $M\circ N=M\otimes_A N\cong mn\, R\otimes_A R^{\op}\cong B$. Since $A,B$ are simple modules, we have $mn=1$ and we can assume $M=R, N=R^{\op}$. As a right $A$-module and a left $B$-module respectively, we have $M_A\cong A^r$ and $_B M\cong B^s$. Similarly, $_A N\cong A^p$ and $N_B\cong B^q$. The left $A$-module isomorphism $N\otimes_B M\cong N\otimes_B B^s\cong N^s\cong A^{ps}\cong A$ implies that $p=s=1$. Similarly $r=q=1$. In particular, this implies $\dim_k A=\dim_k B$. 

Finally consider the $k$-algebra homomorphism $f: B\to \End_A(M_A)\cong A$ by sending $b$ to $l_b$ left multiplication by $b$. This is obviously injective, and it is surjective because $A, B$ have the same dimension, so $A\cong B$ as $k$-algebras.
\end{proof}
\section{Toric Varieties}\label{tv}
Let $T$ be an algebraic torus over $k$.
\begin{defn}
A \textit{toric $T$-variety} $X$ over $k$ is a normal geometrically irreducible variety with an action of the torus $T$ and an open orbit $U$ which is a principal homogeneous space over $T$.
\end{defn}
By definition, the torus $T_{k^s}\cong \G_{m, k^s}^n$ splits where $n=\dim X$. The torus $T$ corresponds to a cocycle class $[\rho]\in H^1(\Gamma, \Aut_{\text{gp}, k^s}(\G_{m, k^s}^n))= H^1(\Gamma, \GL(n,\Z))$ where $\Aut_{\text{gp}, k^s}$ denotes the group automorphism over $k^s$. Moreover, the torus $T$ splits over a finite Galois extension $l$ of $k$ ($T_l\cong \G_{m,l}^n$), which is called the \textit{splitting field} of $T$.

Explicitly, tori $T_{k^s}=T\otimes_k k^s$ and $\G_{m, k^s}^n=\G_{m,k}\otimes_k k^s$ have natural Galois actions with $\Gamma$ acting on the factor $k^s$. The Galois actions give group automorphisms of $T_{k^s}$ and $\G_{m, k^s}^n$ over $k$, but not over $k^s$ because $\Gamma$ also acts on the scalars $k^s$. Let $\sigma: \Gamma\to \Aut_k(T_{k^s})$ and $\tau: \Gamma\to \Aut_k(\G_{m, k^s}^n)$ be the respective natural Galois actions. Let $\phi: T_{k^s}\to\G_{m, k^s}^n$ be an isomorphism. Then we obtain $\rho:\Gamma\to\GL(n,\Z)$ by sending $g$ to $\phi \sigma(g) \phi^{-1}\tau(g)^{-1}$, and we have $\ker(\rho)=\Gal(k^s/l)$ where $l$ is the splitting field. 

Conversely, the torus $T$ can be constructed from $\rho: \Gamma\to\GL(n,\Z)$ as follows, see \cite[\S1]{vdemazure}. The map $\rho$ factors through $\rho': G=\Gal(l/k)\to\GL(n,\Z)$ for a finite Galois extension $l$ of $k$. Let $\mu: G\to\Aut_k(\G_{m,l}^n)$ be the action on the torus $\G_{m,k}^n\otimes_k l$ via $\mu(g)=\rho'(g)\otimes g, g\in G$. Then $T\cong \G_{m,l}^n/\mu(G)$.
\begin{defn}
A toric $T$-variety $X$ over $k$ is called a \textit{toric $T$-model} if $U(k)$ is nonempty. 
\end{defn}
In this case, the open orbit $U\cong T$ as $k$-varieties and there is an $T$-equivariant embedding $T\hookrightarrow X$. If X is smooth over $k$, then the set $X(k)$ is nonempty if and only if $U(k)$ is \cite[\S4 Proposition 4]{vkrootsys}. 
\begin{defn}
A toric $T$-variety is \textit{split} if $T$ splits, and is \textit{non-split} otherwise.
\end{defn}
Let $X_{k^s}$ (or $X_l$) be the split toric variety with the fan structure $\Sigma$. Since the $\Gamma$-action on $T_{k^s}$ is compatible with the one on $X_{k^s}$, the image of $\rho$ is contained in $\Aut_{\Sigma}$, namely
\begin{equation*}
\rho(\Gamma)=\Gal(l/k)\subseteq\Aut_{\Sigma}\subset \GL(n,\Z).
\end{equation*}

Let $X_{\Sigma}$ be the split toric variety over $k$ with the fan structure $\Sigma$. If $X$ is a toric $T$-model, then similarly to the case of torus $T$, the variety $X$ can be recovered from $\rho$ and $\Sigma$ as $(X_{\Sigma}\otimes_k l)/\mu(G)$. In general, for each toric $T$-variety $X$, there is a unique (up to $T$-isomorphism) toric $T$-model $X^*$ such that $X_{k^s}\cong (X^*)_{k^s}$. We call $X^*$ the associated toric $T$-model of $X$. More specifically, the toric $T$-model $X^*$ is given by $(X\times U)/T$ where $T$ acts on $X\times U$ diagonally, and the toric $T$-variety $X$ is given by $(X^*\times U)/T$ where $T$ acts on $X^*\times U$ via $t\cdot(x,y)=(tx,yt^{-1})$, see \cite[\S4]{vkrootsys}. 

In summary, an algebraic torus $T$ is uniquely determined by a $1$-cocycle (class) $\rho: \Gamma\to\GL(n,\Z)$. A toric $T$-model $X$ is uniquely determined by $\rho$ and fan $\Sigma$ with the restriction $\rho(\Gamma)\subseteq\Aut_{\Sigma}$. A toric $T$-variety is uniquely determined by its associated $T$-model $X^*$ and a principal homogeneous space $U\in H^1(k,T)$.
\begin{lem}\label{toricmorphism}
Let $\phi: X_{\Sigma_1}\to X_{\Sigma_2}$ be a toric morphism of split smooth projective toric varieties over $k^s$, and let $\bar{\phi}: N_1\to N_2$ be the induced $\Z$-linear map of lattices that is compatible with fans $\Sigma_1, \Sigma_2$. Let $\rho_i: \Gamma\to \Aut(N_i)$ be Galois actions on $N_i$ that are compatible with the fans $\Sigma_i$ ($\rho_i(\Gamma)\subseteq \Aut_{\Sigma_i}$) such that $\bar{\phi}$ is $\Gamma$-equivariant with respect to $\rho_1, \rho_2$. Let $T_i$ be the torus corresponding to $\rho_i$. Then, for any $U_1\in H^1(k,T_1)$, there exists $U_2\in H^1(k, T_2)$ such that $\phi$ descends to a map $X_1\to X_2$ where $X_i$  is the toric variety corresponding to $(\rho_i,\Sigma_i, U_i)$ for $i=1,2$.
\end{lem}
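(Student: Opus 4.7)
The plan is to produce a $k$-morphism $f: T_1 \to T_2$ from $\bar\phi$, set $U_2 = f_*(U_1) \in H^1(k, T_2)$, and then realize the desired map $X_1 \to X_2$ as the quotient of a Galois-equivariant lift $(\phi^*, \iota): X_1^* \times U_1 \to X_2^* \times U_2$. The presentation $X_i = (X_i^* \times U_i)/T_i$ recalled just above the lemma is exactly what enables this descent.

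First I would upgrade $\bar\phi$ to a torus morphism. Under the anti-equivalence between algebraic tori over $k$ and continuous $\Gamma$-modules that are finitely generated free over $\Z$, the $\Gamma$-equivariance of $\bar\phi$ with respect to $\rho_1$ and $\rho_2$ yields a $k$-morphism $f: T_1 \to T_2$ whose base change to $k^s$ is the torus map induced by $\bar\phi$. Next I would descend $\phi$ itself: because $\bar\phi$ is fan-compatible and $\Gamma$-equivariant, the toric morphism $\phi$ is equivariant under the twisted $G$-actions $\mu_1, \mu_2$ that define the toric $T_i$-models $X_i^*$, so $\phi$ descends to a $k$-morphism $\phi^*: X_1^* \to X_2^*$ which is $f$-equivariant.

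I would then set $U_2 = f_*(U_1) \in H^1(k, T_2)$, realized concretely as the contracted product $(U_1 \times T_2)/T_1$ with $T_1$ acting by $t \cdot (u, s) = (tu, f(t)^{-1} s)$. The canonical $k$-morphism $\iota: U_1 \to U_2$ given by $u \mapsto [(u, 1)]$ is then $f$-equivariant. Combining gives a $k$-morphism $(\phi^*, \iota): X_1^* \times U_1 \to X_2^* \times U_2$, and one checks directly that under the twisted diagonal action $t \cdot (x, y) = (tx, yt^{-1})$ this map carries the $T_1$-action on the source to the $T_2$-action on the target via $f$. Passing to quotients therefore produces the required $k$-morphism $X_1 \to X_2$, whose base change to $k^s$ recovers $\phi$.

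The main obstacle I expect is the descent step producing $\phi^*$: one needs to use that a toric morphism of split toric varieties is determined by its underlying lattice map, so that the $\Gamma$-equivariance hypothesis on $\bar\phi$ really does force the Galois-equivariance of $\phi$ with respect to the twisted actions $\mu_1, \mu_2$ defining $X_1^*$ and $X_2^*$. Once that is settled, everything else is formal manipulation of quotients and contracted products.
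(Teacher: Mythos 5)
Your proposal follows essentially the same route as the paper's proof: restrict $\phi$ to tori to get $f = \psi: T_1 \to T_2$, use $\Gamma$-equivariance of $\bar\phi$ to descend $\phi$ to $\varphi: X_1^* \to X_2^*$, define $U_2$ as the pushforward of $U_1$ along $\psi_*: H^1(k,T_1) \to H^1(k,T_2)$, and descend via the presentation $X_i = (X_i^* \times U_i)/T_i$. You spell out the contracted-product realization of $U_2$ and the equivariance check more explicitly than the paper does, but the argument is the same.
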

\begin{proof}
Restrict $\phi$ to tori $\phi |_{T_{N_1}}: T_{N_1}\to T_{N_2}$. Since $\bar{\phi}$ is $\Gamma$-equivariant, the maps $\phi$ and $\phi |_{T_{N_1}}$ descend to $\varphi: X_1^*\to X_2^*$ where $X_i^*$ are the toric $T_i$-models corresponding to $\Sigma_i$ and $\psi: T_1\to T_2$. The map $\psi$ induces $H^1(k, T_1)\to H^1(k, T_2)$ and let $U_2$ be the image of $U_1$ under this map. Set $X_i=(X_i^*\times U_i)/T_i$. Then $\phi$ descends to a map $X_1\to X_2$.
\end{proof}
\begin{ex}\label{sb}
Severi-Brauer variety $X$ ($X_{k^s}\cong\P^n$) . Let $A$ be a central simple $k$-algebra of degree $n+1$. Then $X=\text{SB}(A)$ is a toric variety with the torus $T=\text{R}_{E/k}(\G_{m,E})/\G_{m,k}$ where $E$ is a maximal \'{e}tale $k$-subalgebra of $A$. The variety $X$ has a rational point if and only if $A=M_{n+1}(k)$ if and only if $X\cong \P^n$.

Quillen \cite[$\S 8$ Theorem $4.1$]{quillen} showed that $K_m(\text{SB}(A))\cong K_m(k)\times \prod_{i=1}^n K_m(A^{\otimes i})$ for $m\geqslant 0$, and Panin \cite{pflag} showed that $\text{SB}(A)\cong k\times \prod_{i=1}^n A^{\otimes i}$ in $\C$.
\end{ex}
\begin{ex}\label{dp}
Let $X$ be a del Pezzo surface of degree $6$ over $k$ ($K_X$ is anti-ample with $K_X^2=6$, $X_{k^s} \cong \Bl_{p_1,p_2,p_3}(\P^2)$ where $p_1,p_2,p_3$ are not collinear). It is a toric $T$-variety where the torus $T$ is the connected component of the identity of $\Aut_k(X)$. 

Blunk \cite{mb} showed that $X\cong k\times P\times Q$ in $\C$ where $P$ is an Azumaya $K$-algebra of rank $9$ ($\dim_k(P)/\dim_k(K)=9$) and $Q$ is an Azumaya $L$-algebra of rank $4$ where $K,L$ are \'{e}tale $k$-algebras of degree $2$ and $3$, respectively.
\end{ex}
\begin{ex}\label{inv}
Involution surface $X$ ($X_{k^s}\cong \P^1\times\P^1$). The surface $X$ corresponds to a central simple $k$-algebra $A$ of degree 4 together with a quadratic pair $(\sigma, f)$ on $A$. For the definition of a quadratic pair, see \cite[\S 5B]{boi}. The associated even Clifford algebra $C_0(A, \sigma, f)$ (defined in their \S8B) is a quaternion algebra over $K$, which is an \'{e}tale quadratic extension of $k$ and is called the \textit{discriminant extension} of $X$. Write $B=C_0(A, \sigma, f)$. Then $X$ is the Weil restriction $\text{R}_{K/k}\text{SB}(B)$, see \cite[Example 3.3]{aberdel}. Denote by $T$ the torus of $\text{SB}(B)$ in Example \ref{sb}. Then $X$ is a toric variety with the torus $\text{R}_{K/k}T$.

Panin \cite{pflag} showed that $X\cong k\times B\times A$ in $\C$.
\end{ex}
\subsection{\texorpdfstring{$K_0$}{K0} of split toric varieties}
Let $Y$ be a split smooth proper toric $T$-variety with fan $\Sigma$. 

For $\sigma\in\Sigma$, denote $\O_{\sigma}$ the closure of the $T$-orbit corresponding to $\sigma$ and $J_{\sigma}$ the sheaf of ideals defining $\O_{\sigma}$. Write $\sigma(1)$ as the set of rays spanning $\sigma$. For $\sigma, \tau\in\Sigma$, if $\sigma(1)\cap\tau(1)=\emptyset$ and $\sigma(1)\cup\tau(1)$ span a cone in $\Sigma$, then denote the cone by $\<\sigma,\tau\>$, otherwise set $\<\sigma,\tau\>=0$.

From \cite{klydemazure}, we have
\begin{thm}[Klyachko, Demazure]\label{K0}
As an abelian group, $K_0(Y)$ is generated by $\O_{\sigma}=1-J_{\sigma}$ with these relations:
\begin{equation}\label{rel1}
    \O_{\sigma}\cdot \O_{\tau}=\left\{
    \begin{array}{ll}
        \O_{\<\sigma,\tau\>}, & \<\sigma,\tau\>\neq 0\\
         0, & \text{otherwise}
    \end{array}
    \right.
\end{equation}
\begin{equation}\label{rel2}
    \prod_{e\in\Sigma(1)} J_e^{f(e)}=1, f\in\Hom(N,\Z)=M \text{(the group of characters of $T$)}.
\end{equation}
\end{thm}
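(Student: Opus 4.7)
The plan is to establish that $K_0(Y)$ is generated by orbit-closure classes, verify both families of relations hold geometrically, and then argue completeness by a rank comparison. For generation, the $T$-orbit stratification $Y=\bigsqcup_\sigma O_\sigma$ ordered by dimension gives a filtration whose strata have K-theory generated by structure sheaves; excision then yields a surjection from the free abelian group on $\{[\O_\sigma]\}_{\sigma\in\Sigma}$ onto $K_0(Y)$.

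For relation (\ref{rel1}): when $\<\sigma,\tau\>\in\Sigma$, the closures $\O_\sigma$ and $\O_\tau$ meet transversally along $\O_{\<\sigma,\tau\>}$ (smoothness of $Y$ makes the higher $\mathcal{T}or$ sheaves vanish), so $[\O_\sigma]\cdot[\O_\tau]=[\O_{\<\sigma,\tau\>}]$; otherwise $\O_\sigma\cap\O_\tau=\emptyset$ and the product is zero. For relation (\ref{rel2}): a character $\chi^f$ with $f\in M$ extends to a rational function on $Y$ with principal divisor $\sum_\rho f(\rho) D_\rho$ (writing $D_\rho=\O_\rho$ for the $T$-invariant divisor corresponding to a ray), and since $J_\rho=\O_Y(-D_\rho)$ is invertible, $\prod_\rho J_\rho^{f(\rho)}\cong\O_Y(-\mathrm{div}(\chi^f))\cong\O_Y$ in $\Pic(Y)\subset K_0(Y)$.

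For completeness, let $R$ be the ring presented by the stated generators and relations; the preceding steps supply a surjection $R\twoheadrightarrow K_0(Y)$. A Bialynicki--Birula decomposition from a generic cocharacter of $T$ exhibits $K_0(Y)$ as a torsion-free abelian group of rank $|\Sigma(n)|$ (the number of maximal cones, equivalently $T$-fixed points). One then reduces $R$ to a free abelian group of the same rank by combining the two relations: (\ref{rel2}) cuts the rank of the Picard-like part down to $|\Sigma(1)|-n$, and iterated intersections via (\ref{rel1}) determine all higher-codimension classes in terms of maximal-cone classes, so the surjection becomes an isomorphism.

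The main obstacle is this rank-matching step: interpreting (\ref{rel2}) in $K_0$ requires expanding $J_\rho^{f(\rho)}=(1-\O_\rho)^{f(\rho)}$ with possibly negative exponents via invertibility of $J_\rho$, and tracking its interaction with (\ref{rel1}) to eliminate all non-basis expressions is delicate. A cleaner route avoids this altogether by computing $T$-equivariant K-theory $K_0^T(Y)$, which admits a Stanley--Reisner-style presentation over the representation ring $\Z[M]$; descent along $K_0(Y)=K_0^T(Y)\otimes_{\Z[M]}\Z$ then introduces exactly the relations (\ref{rel2}), yielding the presentation directly.
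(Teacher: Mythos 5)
The paper does not prove this theorem; it is stated as a cited result attributed to Klyachko and Demazure (the reference \cite{klydemazure}), so there is no internal proof to compare your attempt against.

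On the merits of your argument itself, the most concrete gap is in your treatment of relation~(\ref{rel1}). You assert that when $\<\sigma,\tau\>=0$ the orbit closures satisfy $\overline{O}_\sigma\cap\overline{O}_\tau=\emptyset$, so the product vanishes trivially. But $\<\sigma,\tau\>=0$ occurs in two quite different situations: (a) $\sigma(1)\cup\tau(1)$ does not span a cone of $\Sigma$, in which case the closures are indeed disjoint, and (b) $\sigma(1)\cap\tau(1)\neq\emptyset$, in which case the closures are typically \emph{not} disjoint --- for instance $\sigma=\tau$ gives $\overline{O}_\sigma\cap\overline{O}_\tau=\overline{O}_\sigma$, and more generally whenever $\sigma$ is a face of $\tau$ one closure contains the other. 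In case (b) the assertion $[\O_\sigma]\cdot[\O_\tau]=0$ is not a consequence of disjointness; it requires an actual excess-intersection / derived tensor product computation (the higher $\mathcal{T}or$'s do not vanish, and their alternating sum has to be controlled), and your transversality shortcut simply does not apply. This case is not a triviality and must be argued separately.

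The second gap is one you flag yourself: the rank-matching step that would promote the surjection from the presented ring onto $K_0(Y)$ to an isomorphism. Saying that relation~(\ref{rel2}) ``cuts the Picard-like part down to $|\Sigma(1)|-n$'' and that iterated use of~(\ref{rel1}) reduces everything to maximal-cone classes is a plausible outline, but as written it is not a proof --- you would need to exhibit an explicit free $\Z$-basis for $R$ of cardinality $|\Sigma(n)|$, and the negative-exponent bookkeeping in $\prod J_e^{f(e)}=1$ together with the Stanley--Reisner-type relations makes this genuinely delicate. Your proposed detour through $T$-equivariant $K$-theory and base change along $\Z[M]\to\Z$ is a standard and clean way to do this, but it is invoked rather than carried out; in particular the flatness/completeness hypotheses needed for $K_0(Y)\cong K_0^T(Y)\otimes_{\Z[M]}\Z$ (Merkurjev's or Vezzosi--Vistoli's theorems, say) should be cited and checked for a smooth complete toric variety. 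Until one of the two completeness routes is actually executed, the argument establishes only that the presented ring surjects onto $K_0(Y)$, not that the presentation is exact.
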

\begin{thm}[Klyachko] 
The abelian group $K_0(Y)$ is free with rank equal to the number of the maximal cones. In addition, sheaves $\O_y$ and $\O_{y'}$ coincide in $K_0(Y)$ for any rational closed points $y, y'\in Y$.
\end{thm}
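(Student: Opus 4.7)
The plan is to derive both claims from a stratification of $Y$ into affine cells indexed by the maximal cones of $\Sigma$, combined with the localization sequence in K-theory.

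First, since $Y$ is split, smooth, and proper, the $T$-fixed points are isolated and in natural bijection with the maximal cones $\sigma \in \Sigma(n)$ via $\sigma \mapsto y_\sigma$. Choose a generic one-parameter subgroup $\lambda : \G_m \to T$, meaning $\langle \lambda, v \rangle \neq 0$ for every ray generator $v$ of $\Sigma$. The Bialynicki-Birula decomposition writes $Y$ as a disjoint union of locally closed cells
\[
Y_\sigma^+ = \{y \in Y : \lim_{t \to 0} \lambda(t) \cdot y = y_\sigma\} \cong \mathbb{A}^{d_\sigma},
\]
one per maximal cone. For toric varieties this decomposition is combinatorial and characteristic-free: $Y_\sigma^+$ is the union of those $T$-orbits $O_\tau$ whose closures contain $y_\sigma$ and whose defining cones satisfy a sign compatibility with $\lambda$. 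Ordering the cells compatibly with the closure partial order produces a filtration $\emptyset = F_{-1} \subset F_0 \subset \cdots \subset F_N = Y$ by closed subvarieties with $F_i \setminus F_{i-1} \cong \mathbb{A}^{d_i}$.

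Next, I would apply the Quillen localization sequence
\[
K_1(\mathbb{A}^{d_i}) \to K_0(F_{i-1}) \to K_0(F_i) \to K_0(\mathbb{A}^{d_i}) \to 0
\]
to each step of the filtration. Using homotopy invariance $K_0(\mathbb{A}^{d_i}) \cong \Z$ together with the fact (standard for varieties admitting a cellular decomposition by affine spaces) that the boundary map vanishes, the sequence yields $K_0(F_i) \cong K_0(F_{i-1}) \oplus \Z$. Inducting from $K_0(F_{-1}) = 0$ then gives $K_0(Y)$ free of rank $N + 1 = |\Sigma(n)|$, matching the number of maximal cones of $\Sigma$.

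For the second statement, any two $T$-fixed points $y_\sigma, y_{\sigma'}$ are connected by a chain of one-dimensional $T$-orbit closures, each isomorphic to $\P^1$ over $k$. On any such $\P^1 \subset Y$ and any $k$-rational point $y$, the short exact sequence $0 \to \O(-1) \to \O \to \O_y \to 0$ gives $[\O_y] = 1 - [\O(-1)]$ independently of $y$; pushing forward to $Y$ equates the fixed-point classes. For an arbitrary $k$-rational point $y \in Y$, the orbit closure $\overline{\lambda(\G_m) \cdot y}$ is a rational curve over $k$ joining $y$ to the fixed point $\lim_{t \to 0} \lambda(t) \cdot y$, and pushforward along its normalization reduces the general case to fixed points. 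The main obstacle is verifying that in the present smooth, split, proper setting the BB cells are honest affine spaces (not merely affine bundles) and that their ordering yields a bona fide filtration of $Y$ by closed subvarieties defined over $k$. Both follow from the explicit toric description of $Y_\sigma^+$ as a union of orbits $O_\tau$, but the combinatorial bookkeeping requires attention; once it is in place the K-theoretic conclusion is routine.
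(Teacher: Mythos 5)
The paper does not prove this statement itself; it is attributed to Klyachko and taken from the cited reference \cite{klydemazure}, immediately after the generators-and-relations presentation in Theorem \ref{K0}. Klyachko's own route is essentially algebraic/combinatorial, deducing freeness and the rank count from that presentation. Your proposal is a genuinely different, more geometric argument via the Bialynicki--Birula affine paving and the localization sequence — the same mechanism one uses for flag varieties — and it is a correct and self-contained alternative. A few points deserve tightening. First, the intermediate closed subschemes $F_i$ in the filtration are usually singular, so the localization sequence you write is the one for $G$-theory $K'_0$, not $K_0$; you recover $K_0(Y)$ at the end because $Y$ is smooth (resolution theorem). Second, ``the boundary map vanishes'' is the crux of the induction and should not be waved off: what one actually shows is that $K'_0(F_{i-1})\to K'_0(F_i)$ is injective, e.g.\ by the cellular-fibration argument (the classes $[\O_{\overline{C_j}}]$ are linearly independent because the rank of $K'_0(Y)\otimes\Q$ equals the number of cells, via Riemann--Roch or the cycle map), rather than by literally computing $K'_1(\mathbb{A}^{d_i})\to K'_0(F_{i-1})$. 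Third, for the second claim, your reduction of an arbitrary rational point to a $T$-fixed point is cleanest phrased as: the orbit map $\G_m\to Y$, $t\mapsto\lambda(t)\cdot y$, extends by properness to $f\colon\P^1\to Y$, and since $f$ is proper and both $y$ and $\lim_{t\to 0}\lambda(t)\cdot y$ are images of $k$-rational points of $\P^1$, one has $f_*[\O_0]=f_*[\O_1]$ in $K'_0(Y)=K_0(Y)$ — there is no need for $f$ to be a closed immersion or for the image curve to be smooth. Finally, connectedness of the dual graph of maximal cones (which your chain argument for fixed points uses) is guaranteed because the fan of a proper toric variety is complete. With those clarifications your argument stands, and in exchange for being somewhat longer than invoking Theorem \ref{K0}, it exhibits an explicit $\Z$-basis $\{[\O_{\overline{C_\sigma}}]\}_{\sigma\in\Sigma(n)}$ of $K_0(Y)$, which the algebraic presentation does not give for free.
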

\section{Minimal Toric Surfaces}\label{mts}
Let $X$ be a smooth projective toric surface over $k$. We say $X$ is \textit{minimal} if any birational morphism $f:X\to X'$ from $X$ to another smooth surface $X'$ defined over $k$ is an isomorphism. In this section, we will classify minimal smooth projective toric surfaces.

First we notice that the exceptional locus of any birational morphism from a toric surface is torus invariant. We use the convention that a surface is integral, separated and of finite type.
\begin{lem}
Let $W$ be a smooth projective toric $T$-surface over $k$. Let $h: W\to Z$ be a birational morphism over $k$ from $W$ to a smooth surface $Z$ over $k$. Let $E$ be the exceptional divisor of $h$. Then $E$ is $T$-invariant. Therefore, the surface $Z$ is a smooth projective toric $T$-surface and the map $h$ is $T$-invariant.
\end{lem}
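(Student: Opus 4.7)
The plan is to base-change to the separable closure $k^s$ where $T$ splits, factor $h_{k^s}$ into a sequence of blow-downs, and then apply a numerical rigidity argument. Since $T$-invariance of a closed subscheme descends along $k^s/k$, it suffices to show $E_{k^s}$ is $T_{k^s}$-invariant. As a birational morphism between smooth projective surfaces, $h_{k^s}$ factors as $W_{k^s}=W_0\to W_1\to\cdots\to W_r=Z_{k^s}$, where each step contracts an irreducible $(-1)$-curve $C_i\subset W_i$ to a smooth point of $W_{i+1}$.

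The key step is an inductive argument on $i$ showing that each $W_i$ is a smooth projective toric $T_{k^s}$-surface and that $C_i$ is $T_{k^s}$-invariant. The crux is the following rigidity claim: on any smooth projective toric surface $Y$ with torus $T$, every irreducible $(-1)$-curve $C$ is $T$-invariant. For any $t\in T$, the translate $t\cdot C$ is again an irreducible $(-1)$-curve, and because $T$ is connected while the N\'eron--Severi group of $Y$ is discrete, the class $[t\cdot C]$ is independent of $t$. Thus $C\cdot(t\cdot C)=C^2=-1$; but if $t\cdot C\neq C$, these would be two distinct irreducible curves with nonnegative intersection, a contradiction. Hence $t\cdot C=C$. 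Applied to each $W_i$ in the factorization, this gives $T_{k^s}$-invariance of $C_i$; the $T_{k^s}$-action then descends to $W_{i+1}$ by the universal property of the contraction (since $C_i$ is $T$-invariant and contracted to a single point, which $T_{k^s}$ must fix), and the open $T_{k^s}$-orbit of $W_i$ maps isomorphically onto an open $T_{k^s}$-orbit in $W_{i+1}$ (being disjoint from $C_i$ for dimension reasons), completing the induction. Then $E_{k^s}$ is the union of the strict transforms in $W_{k^s}$ of the $T_{k^s}$-invariant curves $C_i$, hence is $T_{k^s}$-invariant, so $E$ is $T$-invariant.

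For the concluding statements, the same contraction-universal-property argument applied to $h$ itself (using $h_*\O_W=\O_Z$) shows that the composition $h\circ\mu_W: T\times W\to Z$ factors through $\id_T\times h$, producing a $T$-action on $Z$ for which $h$ is equivariant. The surface $Z$ is smooth by hypothesis, proper as the image of the proper variety $W$, and hence projective by the classical fact that smooth proper surfaces are projective. It is geometrically irreducible since $W$ is, and the open $T$-orbit $U\subset W$ is disjoint from the divisor $E$ (being a dense two-dimensional orbit), so $h(U)\subset Z$ is an open $T$-torsor. Thus $Z$ is a smooth projective toric $T$-surface.

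The main obstacle is the rigidity claim for $(-1)$-curves on toric surfaces; it relies crucially on the interplay between the connectedness of $T$, the discreteness of the N\'eron--Severi group, and the negativity of the self-intersection $C^2=-1$. Once this is in hand, everything else is routine descent and contraction bookkeeping.
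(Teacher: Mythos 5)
Your proof is correct, but it takes a genuinely different route from the paper's. Both proofs start from the same key observation — that translation by the connected torus $T$ preserves the linear equivalence class of any divisor, since $T$ is connected and $\Pic(W_{k^s})$ is discrete (the paper phrases this as $\CDiv_T \twoheadrightarrow \Pic$) — but they derive the contradiction in different ways. The paper works with the full exceptional divisor $E$ in one shot: assuming $t_0E \neq E$, it pushes the relation $t_0E \sim E$ forward along $h$, obtaining a nonzero effective $1$-cycle on $Z$ linearly equivalent to $0$, which contradicts the projectivity of $Z$ (intersect with a hyperplane). You instead invoke the classical factorization of $h_{k^s}$ into blow-downs of $(-1)$-curves, prove a crisp rigidity lemma for those curves — $C\cdot(t\cdot C) = C^2 = -1 < 0$ rules out $t\cdot C \neq C$ since two distinct irreducible curves meet nonnegatively — and then descend the torus action stage by stage. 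Interestingly, the paper derives the rigidity of $(-1)$-curves as a corollary of this lemma in its next lemma, whereas you prove that rigidity first and deduce this lemma from it: the logical order is reversed. Your version buys a more explicit, self-contained description of the structure of $h$ and makes the descent of the $T$-action to $Z$ — which the paper's proof leaves implicit in its ``Therefore'' clause — fully explicit; the cost is invoking the factorization theorem and some inductive bookkeeping, where the paper's pushforward argument is shorter and handles $E$ directly without decomposing it.
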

\begin{proof}
First assume that $k$ is separably closed. Then $W$ is split. Since for a split toric variety the group of $T$-invariant Cartier divisors $\CDiv_T$ maps onto the Picard group, the line bundle $\O(E)$ is fixed by the $T$-action. For any $t\in T$, the divisor $tE$ is linearly equivalent to $E$ (denoted $tE\sim E$).

Now assume the locus $E$ is not $T$-invariant and let $t_0\in T$ be such that $t_0E\neq E$. Note that since $W$ is proper and $Z$ is separated, the map $h$ is proper and the surface $Z=h(W)$ is also proper (thus projective). We have $p(t_0E)\sim p(E)=0$. Let $C=p(t_0E)$ which is a curve on $Z$. Embedding $Z$ into some $\mathbb{P}^n$ and let $H$ be a hyperplane of $\mathbb{P}^n$. Since $C$ is a curve, we have $C.H>0$. Therefore, $C$ cannot be linearly equivalent to $0$, a contradiction.

For an arbitrary field $k$, we base change to the separable closure $k^s$ and use the same argument.
\end{proof}
\begin{lem}\label{minsurf}
Let $X$ be a smooth projective toric $T$-surface over $k$. Then $X$ is minimal if and only if $X_{k^s}$ admits no $\Gamma$-invariant set of pairwise disjoint $T_{k^s}$-invariant $(-1)$-curves.  
\end{lem}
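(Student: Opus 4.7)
The plan is to prove both directions of the equivalence. For the direction showing that the existence of a nonempty $\Gamma$-invariant set $S$ of pairwise disjoint $T_{k^s}$-invariant $(-1)$-curves on $X_{k^s}$ forces $X$ to be non-minimal, I will simultaneously blow down all curves in $S$ via Castelnuovo's criterion (possible because the curves are pairwise disjoint) to obtain a smooth projective surface $Y$ over $k^s$. Since $S$ is $\Gamma$-invariant, this contraction is $\Gamma$-equivariant and descends to a non-trivial birational morphism $X \to Y_0$ defined over $k$, so $X$ is not minimal.

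For the converse, assume $X$ is not minimal and fix a non-trivial birational morphism $f: X \to X'$ over $k$ with $X'$ smooth and projective. By the previous lemma, $f$ is $T$-equivariant and $X'$ is a smooth projective toric $T$-surface, so the base change $f_{k^s}: X_{k^s} \to X'_{k^s}$ is a toric morphism of split smooth projective toric surfaces, corresponding to removing a $\Gamma$-invariant collection of rays from the fan $\Sigma$ of $X_{k^s}$. I define $S$ to be the set of removed rays whose corresponding $T_{k^s}$-invariant curves already have self-intersection $-1$ on $X_{k^s}$. Then $S$ is $\Gamma$-invariant (since $\Gamma$ preserves both the set of removed rays and self-intersection numbers) and nonempty (any factorization of $f_{k^s}$ into successive $T_{k^s}$-equivariant Castelnuovo contractions must begin by contracting a $T_{k^s}$-invariant $(-1)$-curve already present on $X_{k^s}$).

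The main obstacle is to verify that the rays in $S$ are pairwise non-adjacent in $\Sigma$, equivalently that the corresponding curves are pairwise disjoint. I will argue by contradiction: suppose $\rho_1, \rho_2$ are adjacent rays in $S$, with $\rho_2 = \rho_1^+$ in the cyclic order of $\Sigma$. The $(-1)$-curve conditions for $\rho_1$ and $\rho_2$ in the smooth fan $\Sigma$ yield $v_{\rho_1^-} + v_{\rho_2} = v_{\rho_1}$ and $v_{\rho_1} + v_{\rho_2^+} = v_{\rho_2}$, whose sum forces $v_{\rho_1^-} + v_{\rho_2^+} = 0$. Now consider the maximal consecutive block of removed rays containing $\rho_1, \rho_2$, bounded by non-removed rays $\sigma_L, \sigma_R$ of $\Sigma$; in the fan of $X'_{k^s}$ this block is replaced by the single two-dimensional cone $\langle \sigma_L, \sigma_R \rangle$, which must be strongly convex. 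But both $v_{\rho_1^-}$ and $v_{\rho_2^+}$ lie in the closed convex cone $\langle \sigma_L, \sigma_R \rangle$ (each either as one of the boundary generators $v_{\sigma_L}, v_{\sigma_R}$ or as an interior ray of the block), and they are nonzero opposite vectors, so $\langle \sigma_L, \sigma_R \rangle$ contains a line through the origin---contradicting strong convexity. Hence $S$ is pairwise disjoint, completing the proof.
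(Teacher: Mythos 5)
Your proof is correct and takes a genuinely different route from the paper's. The paper's proof is very short: it first observes (using the preceding lemma applied over $k^s$) that \emph{every} $(-1)$-curve on a split smooth projective toric surface is automatically torus-invariant, and then simply invokes the general minimality criterion for surfaces over arbitrary fields (\cite[Theorem 3.2]{hassurf}): a smooth projective surface over $k$ is minimal if and only if $X_{k^s}$ admits no $\Gamma$-invariant set of pairwise disjoint $(-1)$-curves. With that citation the lemma is immediate. You instead re-prove the relevant implications directly by toric combinatorics. Your forward direction (blow down the $\Gamma$-invariant disjoint set and descend) is essentially re-deriving the easy direction of \cite{hassurf}. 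Your converse is where the real work lies: you extract from the removed rays those that are already $(-1)$-curves on $X_{k^s}$, check non-emptiness by looking at the first step of a toric factorization, and prove pairwise disjointness via the strong convexity of the cone $\langle\sigma_L,\sigma_R\rangle$ in the target fan together with the smooth-fan relation $v_{\rho^-}+v_{\rho^+}=v_{\rho}$ for a $(-1)$-ray. That convexity argument is clean and correct, and it is exactly the content that the paper obtains for free from \cite{hassurf}. The trade-off is clear: your argument is self-contained (modulo the preceding lemma on torus-invariance of exceptional divisors and standard facts about toric refinements), while the paper's is much shorter because it leverages the general surface-theoretic result and the observation that torus-invariance of $(-1)$-curves comes for free. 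One small point worth tightening in your write-up: the descent step in the forward direction (``descends to a birational morphism over $k$'') deserves at least a citation to Galois descent for projective morphisms, which the paper avoids by quoting \cite{hassurf} directly.
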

\begin{proof}
Since any $(-1)$-curve is the exceptional locus of some birational morphism, by the previous lemma, it is always torus invariant. The rest follows from \cite[Theorem 3.2]{hassurf}.
\end{proof}
\begin{defn}
Let $Y$ be a split smooth projective toric surface over a field $K$. If there is a finite group $G$ acting on $Y$ by $K$-automorphisms, we call $Y$ a \textit{$G$-surface} over $K$.  The $G$-surface $Y$ is called \textit{$G$-minimal} over $K$ if $Y$ admits no $G$-invariant set of pairwise disjoint torus invariant $(-1)$-curves.
\end{defn}
Lemma \ref{minsurf} implies that we can redefine minimal toric surfaces as follows:
\begin{defn}
Let $X$ be a smooth projective toric $T$-surface over $k$ and let $\rho:\Gamma\to\GL(2,\Z)$ be the map corresponding to the torus $T$. Let $G=\rho(\Gamma)$ which is a finite subgroup of $\GL(2,\Z)$ and acts on the split toric surface $X_{k^s}$ by fan automorphisms ($G\subseteq\Aut_{\Sigma}(X_{k^s})$). We say the toric surface $X$ is \textit{minimal} if $X_{k^s}$ is $G$-minimal over $k^s$.
\end{defn}
\begin{prop}
Let $X$ and $G=\rho(\Gamma)$ be the same as above. Then there is a finite chain of blow-ups of toric $T$-surfaces
\begin{equation*}
X=X_0\overset{f_1}{\rightarrow} X_1\overset{f_2}{\to}\cdots\overset{f_n}{\to} X_n=X'
\end{equation*}
where each $X_i$ is a smooth projective toric $T$-surface, each map $f_i$ is the blow-up of $X_i$ along $T$-invariant reduced zero-dimensional subscheme (in particular, $f_i$ is $T$-invariant) and $X'$ is minimal.
\end{prop}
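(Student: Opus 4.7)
The plan is to iterate the contraction of a $\Gamma$-invariant set of pairwise disjoint torus-invariant $(-1)$-curves, working on the level of fans and then descending via Lemma \ref{toricmorphism}. The process must terminate because each step strictly decreases the rank of $\Pic(X_{k^s})$.

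First I would reduce to the fan-theoretic picture. Let $\Sigma$ be the fan of $X_{k^s}$ and let $G = \rho(\Gamma) \subseteq \Aut_{\Sigma}$. If $X$ is already minimal, take $n = 0$. Otherwise, by Lemma \ref{minsurf} there is a $\Gamma$-invariant set $\mathcal{R}$ of pairwise disjoint $T_{k^s}$-invariant $(-1)$-curves on $X_{k^s}$. On a smooth split toric surface, a torus-invariant $(-1)$-curve is precisely a divisor $D_e$ for some ray $e \in \Sigma(1)$ such that removing $e$ from $\Sigma$ and fusing the two adjacent maximal cones yields a smooth fan; equivalently, $e$ is the minimal generator $e_1 + e_2$ of the sum of its two fan neighbors $e_1,e_2$. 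So $\mathcal{R}$ corresponds to a $G$-invariant subset of $\Sigma(1)$, and because the $(-1)$-curves in $\mathcal{R}$ are pairwise disjoint, no two of the corresponding rays are neighbors in $\Sigma$. Therefore deleting all of them at once produces a well-defined smooth complete fan $\Sigma_1$ on the same lattice $N$, and the identity map of $N$ is compatible with $\Sigma$ and $\Sigma_1$ and $G$-equivariant.

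Next I would descend. The identity $\bar\phi = \id_N$ induces a toric morphism $\phi: X_{\Sigma} \to X_{\Sigma_1}$ over $k^s$ that is equivariant for $G$ (hence for $\Gamma$). Since the torus associated to $\rho$ and $\Sigma_1$ is the same torus $T$ (the map on tori is the identity), Lemma \ref{toricmorphism} applied with $U_1 = U_2 = U$ (the open orbit of $X$) yields a $T$-equivariant morphism $f_1 : X \to X_1$ of smooth projective toric $T$-surfaces over $k$ whose base change to $k^s$ is $\phi$. Over $k^s$, $\phi$ is the blow-down of the disjoint $(-1)$-curves in $\mathcal{R}$, so $f_1$ is the blow-up of $X_1$ along a zero-dimensional $T$-invariant reduced subscheme (the image of $\mathcal{R}$), and this subscheme is defined over $k$ because $\mathcal{R}$ is $\Gamma$-invariant.

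Finally, iterate. Replacing $X$ by $X_1$ and repeating gives a sequence $X=X_0 \to X_1 \to \cdots$ in which each $X_i$ is a smooth projective toric $T$-surface and $\Pic((X_i)_{k^s})$ strictly drops in rank at each step, since each $f_i$ blows down at least one $(-1)$-curve. Because $\Pic((X_i)_{k^s}) \cong \Z^{|\Sigma_i(1)|-2}$ has finite rank, after finitely many steps we reach $X_n = X'$ admitting no $\Gamma$-invariant set of pairwise disjoint torus-invariant $(-1)$-curves; by Lemma \ref{minsurf}, $X'$ is minimal.

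The one point requiring care is confirming that all the contractions can be carried out at once in a single step (rather than one at a time, which would require at each stage a new choice of $\Gamma$-orbit): this is where the pairwise-disjointness of $\mathcal{R}$ is essential, since it ensures that the simultaneous removal of the corresponding rays still produces a smooth fan. Once this combinatorial point is in hand, the descent via Lemma \ref{toricmorphism} and the termination are automatic.
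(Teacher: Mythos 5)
Your proof is correct and follows essentially the same approach as the paper (contract the Galois-invariant set of disjoint torus-invariant $(-1)$-curves, descend, and iterate until the number of rays is exhausted); the paper's proof is much terser, while you fill in the fan-combinatorial reason the simultaneous contraction produces a smooth fan and make the descent explicit via Lemma~\ref{toricmorphism}.
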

\begin{proof}
If $X$ is not minimal, then $X_{k^s}$ admits a $G$-invariant set of pairwise disjoint $T_{k^s}$-invariant $(-1)$-curves. Contracting this $G$-set of $(-1)$-curves and descending the contraction map to the base field $k$, we get a map $f_1: X\to X_1$ which is the blow-up of a smooth projective toric $T$-surface $X_1$ along $T$-invariant reduced zero-dimensional subscheme. This process will terminate in finite steps because the number of rays in the fan of $(X_1)_{k^s}$ is strictly less than that of $X_{k^s}$.
\end{proof}
Now, classifying all minimal smooth projective toric surfaces over $k$ is the same as classifying, for each finite subgroup $G$ of $\GL(2,\Z)$ (up to conjugacy), $G$-minimal toric surfaces over $k^s$. It is well known that, when $G$ is trivial, the minimal (toric) surfaces are $\P^2$ and Hirzebruch surfaces $F_a=\text{Proj}(\O_{\P^1}\oplus\O_{\P^1}(a))$ for $a\geqslant 0, a\neq 1$.

There are $13$ non-conjugate classes of finite subgroups of $\GL(2,\Z)$ and they can only be either cyclic or dihedral groups \cite[Chapter IX, \S 14]{intmatrices}. See Table \ref{table:subgroups}.

\begin{table}[!htb]
\caption{Non-conjugate classes of finite subgroups of $\GL(2,\Z)$ and their generators}
\label{table:subgroups}
\begin{center}
\begin{tabular}{lll}
\hline
Cyclic \hspace{15ex} & Dihedral\hspace{15ex} & Generators\\
\hline
$C_1=\<I\>$   & $D_2=\<C\>$  & \multirow{2}{*}{$A =\left(\begin{array}{rr} 1 & -1 \\ 1 & 0 \end{array}\right)$}\\
              & $D_2'=\<C'\>$ & \\[1.5ex]
$C_2=\<-I\>$  & $D_4=\<-I,C\>$ & \multirow{2}{*}{$B = \left(\begin{array}{rr} 0 & -1 \\ 1 & 0 \end{array}\right)$}\\
              & $D_4'=\<-I,C'\>$ & \\[1.5ex]
$C_3=\<A^2\>$ & $D_6=\<A^2,C\>$ & \multirow{2}{*}{$C = \left(\begin{array}{rr} 0 & 1 \\ 1 & 0 \end{array}\right) $}\\
              & $D_6'=\<A^2,-C\>$ & \\[1.5ex]
$C_4=\<B\>$   & $D_8=\<B,C\>$  &  \multirow{2}{*}{$C' = \left(\begin{array}{rr} 1 & 0 \\ 0 & -1 \end{array}\right) $}\\
$C_6=\<A\>$   & $D_{12}=\<A,C\>$ & \\
\hline
\end{tabular}
\end{center}
\end{table}
\begin{defn}
Let $Y$ be a split smooth projective toric surface with fan structure $\Sigma$. Counterclockwise label the rays of $\Sigma$ as $y_1,...,y_n$ and denote by $D_i$ the divisor corresponding to $y_i$. We can assign a sequence $a=(a_1,...,a_n)$ to $Y$ where $a_i=D_i^2$. We refer to this sequence as the \textit{self-intersection sequence} of $Y$. 
\end{defn}
The group of fan automorphisms $\Aut_{\Sigma}(Y)$ acts on $\Z^2$, permuting rays $y_i$ of the fan $\Sigma$. First observe that as automorphisms of $Y$, the group $\Aut_{\Sigma}(Y)$ preserves the self-intersection number of any divisor and thus permutes (torus invariant) $(-1)$-curves on $Y$. Now, let us consider the case where $\Aut_{\Sigma}(Y)\cap \SL(2,\Z)=C_t$ is nontrivial and look at the action of $C_t$ on the rays. As indicated in Table \ref{table:subgroups}, the cyclic group $C_t$ is generated by powers of $A$ or $B$ where $B$ is the rotation by $\pi/4$ and $A$ is conjugate in $\GL(2,\mathbb{R})$ to the rotation by $\pi/3$. In particular, the action of $C_t$ on the fan $\Sigma$ is free which implies $t\,|\,n$.
\begin{lem}\label{blow down}
Let $\Aut_{\Sigma}(Y)\cap \SL(2,\Z)=C_t$ be nontrivial (i.e, $t=2,3,4,6$). If the number of rays of the fan $>\max\{4,t\}$, then $Y$ is not $C_t$-minimal, that is, there exists a $C_t$-invariant set of pairwise disjoint $(-1)$-curves on $Y$. Therefore, $C_t$-minimal surfaces have the number of rays $\leqslant\max\{4,t\}$.
\end{lem}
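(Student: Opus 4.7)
The plan is to combine the free rotational action of $C_t$ on the set of rays with the classical fact that every smooth projective toric surface with $n\geqslant 5$ rays admits a torus-invariant $(-1)$-curve. Since $C_t\subseteq\SL(2,\Z)$ is nontrivial and its generator acts on $\mathbb{R}^2$ as a linear map of finite order with only the origin as fixed point, it permutes the $n$ rays freely, so $t\mid n$; set $m:=n/t$. Under the hypothesis $n>\max\{4,t\}$ one checks the four cases: for $t=2$ we get $n\geqslant 6$, for $t=3$ we get $n\geqslant 6$, for $t=4$ we get $n\geqslant 8$, and for $t=6$ we get $n\geqslant 12$. In every case $n\geqslant 5$ and $m\geqslant 2$.

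First I would invoke the classification of smooth projective toric surfaces: a smooth projective toric surface with no torus-invariant $(-1)$-curve is $\mathbb{P}^2$ (with $n=3$) or a Hirzebruch surface $F_a$ for some $a\geqslant 0$ (with $n=4$). Since $n\geqslant 5$, there exists a ray $y_{i_0}$ whose corresponding divisor $D_{i_0}$ has self-intersection $-1$.

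Next I would analyze the $C_t$-orbit of $y_{i_0}$. Because the generator of $C_t$ is an orientation-preserving fan automorphism of order $t$, after choosing the counterclockwise labelling it acts as a cyclic shift by $m$ positions, so the orbit of $y_{i_0}$ consists of the $t$ rays $y_{i_0+jm}$ for $j=0,\dots,t-1$. Since $C_t$ acts by automorphisms of $Y$, it preserves self-intersection numbers, and every divisor in the orbit is again a $(-1)$-curve. This produces a $C_t$-invariant set of $t$ torus-invariant $(-1)$-curves.

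Finally I would verify pairwise disjointness. Two distinct torus-invariant divisors $D_a$ and $D_b$ on a smooth toric surface meet if and only if the rays $y_a$ and $y_b$ are adjacent in the cyclic order, i.e.\ $a\equiv b\pm 1\pmod n$. Inside our orbit, the minimal cyclic distance between distinct rays is $m\geqslant 2$, so no two are adjacent, and the orbit is the desired $C_t$-invariant set of pairwise disjoint $(-1)$-curves. The final assertion about $C_t$-minimal surfaces is the contrapositive. The only substantive obstacle is the appeal to the classification of smooth projective toric surfaces in the first step, which is a standard result but carries the real weight of the argument; the rest is a bookkeeping exercise on orbit sizes and cyclic distances.
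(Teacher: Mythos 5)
Your proof is correct and follows essentially the same approach as the paper: invoke the classification (only $\mathbb{P}^2$ and $F_a$ have no $(-1)$-curve, and they have $n\leqslant 4$), take the $C_t$-orbit of a $(-1)$-ray, and use that the free rotational action spaces orbit elements by $m=n/t\geqslant 2$ to get pairwise disjointness. Your write-up is in fact slightly more careful than the paper's, which only explicitly checks that $y_i$ and $\sigma(y_i)$ are non-adjacent before concluding pairwise disjointness; your observation that the orbit has uniform cyclic spacing $m$ closes that small gap.
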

\begin{proof}
Denote counterclockwise $y_1,...,y_n$ as rays of $\Sigma$ and let $a=(a_1,...,a_n)$ be its self-intersection sequence. If $n>4$, $Y$ is not $\P^2$ or $F_a$, then there exists $i$ such that $a_i=-1$. Let $\sigma$ be a generator of $C_t$ and as discussed above, $\sigma$ rotates the rays. If $n>t$, then the ray $\sigma(y_i)$ is not adjacent to $y_i$ (i.e, corresponding divisors are disjoint) and thus $\{y_i,\sigma(y_i),\dots,\sigma^{t-1}(y_i)\}$ form a $C_t$-invariant set of pairwise disjoint $(-1)$-curves.
\end{proof}
\begin{lem}
$D_2$ fixes rays generated by $\pm (1,1)$ or maximal cones generated by $(1,0)$ and $(0,1)$ or by $(-1,0)$ and $(0,-1)$; $D_2'$ fixes rays generated by $\pm (1,0)$.
\end{lem}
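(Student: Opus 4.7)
The plan is to verify each claim by a direct calculation on primitive vectors in $\Z^2$, using two facts: $C$ and $C'$ are involutions in $\GL(2,\Z)$, and the generators of a smooth $2$-dimensional cone in a toric surface fan form a $\Z$-basis of the lattice. I would first handle $D_2=\<C\>$: a ray generated by a primitive $v\in\Z^2$ is $C$-fixed iff $Cv=v$ (for primitive $v$, a positive rational scalar must be $1$), and solving this eigenvector equation yields only $v=\pm(1,1)$. For a $2$-dimensional cone $\sigma$ fixed setwise by $C$, either both rays of $\sigma$ are individually fixed — impossible since $\pm(1,1)$ are collinear — or $C$ swaps them, in which case the two generators have the form $(a,b)$ and $(b,a)$; the smoothness condition $\det=\pm 1$ becomes $a^2-b^2=\pm 1$, whose only integer solutions give exactly the cones $\<(1,0),(0,1)\>$ and $\<(-1,0),(0,-1)\>$.

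For $D_2'=\<C'\>$ the same eigenvector analysis applied to $C'(x,y)=(x,-y)$ shows the only fixed rays are $\pm(1,0)$. For completeness I would also check cones: if $C'$ swaps two rays of a smooth $2$-dimensional cone, the generators must be of the form $(a,b)$ and $(a,-b)$ with $b\ne 0$, but the smoothness determinant $-2ab=\pm 1$ has no integer solution, so $D_2'$ fixes no maximal smooth cone, consistent with the lemma's silence on that case.

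The whole argument is routine linear algebra; the only point to keep straight is the distinction between a ray being genuinely fixed pointwise (its primitive generator being a $+1$-eigenvector of the involution) and a $2$-dimensional cone being fixed only setwise (its two ray generators being permuted). No step is a real obstacle; the claim is essentially a short enumeration forced by the combination of the involution structure and the unimodular-basis requirement from smoothness.
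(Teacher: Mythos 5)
Your proof is correct. The paper in fact states this lemma without any proof, so there is no argument to compare against; your verification supplies the missing details in the natural way. The key points you handle correctly are: (a) since $C, C'$ lie in $\GL(2,\Z)$, they send primitive vectors to primitive vectors, so a ray is fixed iff its primitive generator is literally a $+1$-eigenvector, giving $\pm(1,1)$ for $C$ and $\pm(1,0)$ for $C'$; (b) a setwise-fixed $2$-cone whose rays are swapped has generators $v$, $Cv$ (resp.\ $v$, $C'v$), and the smoothness (unimodularity) constraint $\det(v \mid Cv)=\pm 1$ reduces to $a^2-b^2=\pm 1$ for $C$, pinning down exactly the two stated cones, while $-2ab=\pm1$ for $C'$ has no integer solution; and (c) the case where both rays of a $2$-cone are individually fixed is vacuous since the fixed rays are collinear. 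This is precisely the routine enumeration the lemma implicitly relies on in the proof of Proposition 4.10.
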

Using toric geometry, Oda showed in \cite[Theorem 8.2]{oda} that a split smooth projective toric surface is a succession of blow-ups of $\P^2$ or $F_a$.  The proof of the theorem is essentially the following lemma:
\begin{lem}\label{odalem}
Let $Y$ be a split smooth projective toric surface with the fan $\Sigma$. Let $x,y$ be two rays in $\Sigma$ where their minimal generators form a basis of $\Z^2$. If $x,y$ are not adjacent in the fan, then there is a ray $z\in\Sigma$ between $x,y$ corresponding to a $(-1)$-curve.
\end{lem}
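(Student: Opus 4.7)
The plan is to reduce to a direct computation on the coordinates of the rays strictly between $x$ and $y$, after a suitable change of basis. First I would apply a $\GL(2,\Z)$ automorphism of the lattice $N$ (composing with a reflection if needed so the induced map preserves orientation) to arrange $x = (1,0)$ and $y = (0,1)$; such a transformation exists because $x, y$ form a $\Z$-basis of $N$, and it preserves the fan structure and all intersection numbers of the torus-invariant divisors. Label the rays along the counterclockwise arc from $x$ to $y$ in order as $v_0 = x, v_1, \ldots, v_m, v_{m+1} = y$. The non-adjacency hypothesis forces $m \geq 1$, and each interior $v_i$ lies in the open first quadrant.

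Next I would use standard toric surface formulas. Smoothness of each cone $\langle v_{i-1}, v_i\rangle$ yields $\det(v_{i-1}\mid v_i) = 1$; and for each $1\leq i\leq m$ there is a unique integer $a_i$ with
\[ v_{i-1} + v_{i+1} = a_i v_i, \]
characterized by $D_i^2 = -a_i$, where $D_i$ is the torus-invariant divisor corresponding to $v_i$. Thus $v_i$ is a $(-1)$-curve precisely when $a_i = 1$. Since $v_{i-1}$ and $v_{i+1}$ lie in the closed first quadrant (with at least one of them having a positive first coordinate, as they are distinct rays) while $v_i$ lies in the open first quadrant, these integers automatically satisfy $a_i \geq 1$.

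The heart of the argument is a contradiction obtained by tracking the first coordinates. Suppose no $v_i$ corresponds to a $(-1)$-curve, so $a_i \geq 2$ for every $i$. Write $v_i = (p_i, q_i)$; then $p_0 = 1$, $p_{m+1} = 0$, and $p_1 \geq 1$, because $v_1$ is an integer point in the open first quadrant distinct from $y$. The recurrence $v_{i+1} = a_i v_i - v_{i-1}$ gives $p_{i+1} = a_i p_i - p_{i-1}$, so by induction, assuming $p_i \geq p_{i-1}$,
\[ p_{i+1} \geq 2 p_i - p_{i-1} \geq p_i. \]
Hence $(p_i)$ is non-decreasing, forcing $p_{m+1} \geq p_0 = 1$, which contradicts $p_{m+1} = 0$. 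Therefore some $a_i$ must equal $1$, producing the desired $(-1)$-curve between $x$ and $y$.

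The main subtlety is bookkeeping: pinning down the sign and orientation conventions so that the formulas $\det(v_{i-1}\mid v_i)=1$, $v_{i-1} + v_{i+1} = a_i v_i$, and $D_i^2 = -a_i$ are all consistent (and so that the change of basis really does place the arc of interest in the first quadrant). Once that setup is fixed, the induction on first coordinates is short and the contradiction is immediate.
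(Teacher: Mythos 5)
The paper does not actually prove Lemma~\ref{odalem}; it simply attributes it to Oda's proof of \cite[Theorem 8.2]{oda}, so there is no in-paper argument to compare against. Your proof is correct and self-contained, and it is essentially the standard argument one would extract from Oda: after normalizing $x=(1,0)$, $y=(0,1)$, smoothness gives the wall-crossing recursion $v_{i-1}+v_{i+1}=a_iv_i$ with $D_i^2=-a_i$, and the convexity of the first quadrant forces $a_i\geq 1$, so if no $a_i$ equals $1$ the first coordinates $p_i$ are non-decreasing from $p_0=1$ to $p_{m+1}=0$, a contradiction. One small cleanup: the parenthetical justification for $a_i\geq 1$ ("at least one of them having a positive first coordinate") is not quite the right reason; the clean statement is that $v_{i-1}+v_{i+1}$ is a nonzero vector in the closed first quadrant while $v_i$ has both coordinates strictly positive, so $a_i>0$, hence $a_i\geq 1$. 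Otherwise the bookkeeping (orientation, sign conventions, base case $p_1\geq p_0=1$) all checks out.
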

Now we are ready to classify $G$-minimal toric surfaces for $G$ a finite subgroup of $\GL(2,\Z)$.
\begin{prop}\label{surface}
Let $Y$ be a split smooth projective toric surface and let $G$ be a finite subgroup of $\GL(2,\Z)$ acting on $Y$ by fan automorphisms; that is, $G\subseteq\Aut_{\Sigma}(Y)$. Then the surface $Y$ is $G$-minimal if and only if $Y$ belongs to one of the following:
\begin{itemize}
\item $G=D_2$: $Y=\mathbb{P}^2, \P^1\times \P^1, F_{2a+1}, a\geqslant 1$;
\item $G=D_2'$: $Y=F_{2a},a\geqslant 0$;
\item $G=C_2, C_4, D_4, D_4', D_8$: $Y=\mathbb{P}^1\times\mathbb{P}^1$;
\item $G=C_3, D_6$: $Y=\mathbb{P}^2$;
\item $G=C_6, D_6', D_{12}$: $Y=S$
\end{itemize}
where $F_a=\text{Proj}(\O_{\P^1}\oplus\O_{\P^1}(a))$ is the Hirzebruch surface and $S$ is the blow-up $\Bl_{p_1,p_2,p_3}(\P^2)$ of $\P^2$ along three torus invariant points.
\end{prop}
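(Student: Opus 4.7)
The plan is a case-by-case analysis over the 13 conjugacy classes from Table \ref{table:subgroups}, using Lemmas \ref{blow down} and \ref{odalem} together with Oda's theorem on iterated blow-ups. For each $G$, one enumerates the possible smooth projective fans admitting a $G$-action and decides which are $G$-minimal via Lemma \ref{minsurf}.

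\textbf{Groups containing a nontrivial rotation.} Let $t = |G \cap \SL(2,\Z)| \in \{2,3,4,6\}$. Lemma \ref{blow down} forces $|\Sigma(1)| \leq \max\{4,t\}$ for any $C_t$-minimal surface, and freeness of the rotation action on primitive lattice vectors forces $t \mid |\Sigma(1)|$. A short combinatorial check identifies the unique smooth fan up to basis change for each admissible $(t, |\Sigma(1)|)$: $\P^1 \times \P^1$ for $t = 2, 4$; $\P^2$ for $t = 3$; and the hexagonal fan of $S$ for $t = 6$. For the dihedral groups $D_4, D_4', D_6, D_8, D_{12}$, one checks by direct inspection which of these candidates carries the full $G$-action (for example $\P^2$ admits $D_6$ but not $D_4$, since $-I \notin \Aut_\Sigma(\P^2)$), and verifies that $G$-minimality still bounds $|\Sigma(1)|$ by exhibiting in any larger $G$-surface a $G$-invariant pairwise disjoint $(-1)$-orbit.

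\textbf{Non-rotation cases.} For $G = D_6'$, the two $C_3$-orbits of $(-1)$-rays on $S$ are swapped by $-C$, so Lemma \ref{blow down} does not directly bound $|\Sigma(1)|$. One verifies that $\P^2$ is not a $D_6'$-surface (since $-C(e_1) = -e_2 \notin \Sigma(\P^2)$), that on $S$ the six $(-1)$-rays form a single $D_6'$-orbit that is not pairwise disjoint (e.g.\ $e_1$ and $(1,1)$ are adjacent in the hexagonal fan), so $S$ is $D_6'$-minimal, and that any larger $D_6'$-surface yields a $D_6'$-invariant disjoint $(-1)$-orbit via Lemma \ref{odalem}. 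For $G = D_2, D_2'$, the key sub-claim is that every $G$-symmetric surface with at least five rays admits a $G$-invariant pairwise disjoint $(-1)$-set. To prove it, fix a $(-1)$-ray $v$ of $\Sigma$ (one exists by Oda, since $Y$ is neither $\P^2$ nor a Hirzebruch surface). If $v$ is $G$-fixed, $\{E_v\}$ suffices; if $v$ and $C(v)$ are non-adjacent, then $\{E_v, E_{C(v)}\}$ suffices. The remaining case, $v$ and $C(v)$ adjacent and spanning $\Z^2$ (forced by smoothness of the cone they span), is sharpened by the integer equation $a^2 - b^2 = \pm 1$ for $v = (a,b)$ (and an analogous computation for $C'$), which forces $v \in \{\pm e_1, \pm e_2\}$; tracing the forced neighbors via the relations $v_- + v_+ = -a_v v$ shows the fan must coincide, up to basis change, with the hexagonal fan of $S$, which already carries the $C$-invariant disjoint pair $\{E_{(1,-1)}, E_{(-1,1)}\}$ of $(-1)$-curves, a contradiction. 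Thus $G$-minimality forces $|\Sigma(1)| \leq 4$, leaving $\P^2$ and Hirzebruch surfaces; a short lattice computation with $(I+\tau)\Z^2$ for the nontrivial fan automorphism $\tau$ of $F_a$ identifies $\tau$ as conjugate to $C$ when $a$ is odd and to $C'$ when $a$ is even, and $F_1$ is excluded since its $(-1)$-curve is $\tau$-fixed, yielding the asserted lists.

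\textbf{Main obstacle.} The chief difficulty is the case $G = D_6'$, where Lemma \ref{blow down} gives no direct bound and one must carefully analyze how the $C_3$-invariant $(-1)$-orbits combine under $-C$ on $S$ and its iterated blow-ups. A close second is the reflection-only analysis for $D_2$ and $D_2'$, where the integer equation $a^2 - b^2 = \pm 1$ (and its $C'$-analog) is the combinatorial core that pins down the few possible $(-1)$-rays whose $G$-image is adjacent and forms a $\Z$-basis.
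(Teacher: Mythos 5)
Your overall structure mirrors the paper's (case-by-case over the 13 conjugacy classes, Lemma~\ref{blow down} for the rotation bound, Oda's Lemma~\ref{odalem} for producing $(-1)$-rays), but you diverge from the paper's route in two places, and both divergences leave gaps.

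For the large dihedral groups the paper uses a clean ``subgroup + automorphism group'' reduction: $C_t \subset G$ forces $Y$ to be a blow-up chain down to the unique $C_t$-minimal surface ($\P^1\times\P^1$ for $t=2$, $\P^2$ for $t=3$), and then one observes $\Aut_\Sigma$ of that surface (resp.\ of $S$) already contains $G$, so $Y$ equals that surface. You instead do direct combinatorics for each $G$, which in principle works, but for $D_6'$ you only verify that $S$ itself is $D_6'$-minimal (correct: the six $(-1)$-rays are a single transitive $D_6'$-orbit that is not pairwise disjoint) and then assert that ``any larger $D_6'$-surface yields a $D_6'$-invariant disjoint $(-1)$-orbit via Lemma~\ref{odalem}.'' That is precisely the hard step: Lemma~\ref{blow down} only gives a $C_3$-invariant disjoint set (size $3$), not a $D_6'$-invariant one (the reflection $-C$ swaps the two $C_3$-orbits of $(-1)$-rays), so you need the paper's refinement argument (the fan of any $C_3$-surface other than $\P^2$ refines the fan of $S$, and the $(-1)$-rays interior to the six cones of $S$ give a $D_6'$-invariant disjoint set) or an equivalent; as written this is a genuine gap.

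For $G=D_2,D_2'$ your integer-equation idea ($a^2-b^2=\pm1$, and its $C'$-analogue) is a genuinely different and attractive alternative to the paper's eigenline analysis (the paper splits on whether $C$ fixes a ray $\pm(1,1)$ or a maximal cone $\langle e_1,e_2\rangle$). Your Cases 1 and 2 are correct, and Case 3 correctly pins $v\in\{\pm e_1,\pm e_2\}$. But the conclusion ``tracing the forced neighbors shows the fan must coincide, up to basis change, with the hexagonal fan of $S$'' is overstated: forced neighbors only give you $(1,-1),e_1,e_2,(-1,1)$; they do not force $(1,-1)$ or $(-1,1)$ to be $(-1)$-rays, and the fan may contain arbitrarily many rays on the opposite side. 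To close Case 3 you would need to argue (e.g.) that if all $(-1)$-rays of $Y$ fall into Case 3 then $Y$ has at most four $(-1)$-rays among $\{\pm e_1,\pm e_2\}$, trace the consequences for the remaining chain via Oda's lemma, and show that either another $(-1)$-ray falls into Case 1 or 2 or the fan is $S$ (which carries the disjoint pair $\{(1,-1),(-1,1)\}$). The paper also glosses this final step (``By Lemma~\ref{odalem}, it is easy to see\dots''), but its eigenline split into Cases (I.1), (I.2), (II) makes the completion more transparent, since in each of those cases the rays adjacent to the fixed ray or fixed cone are pinned down and one can apply Lemma~\ref{odalem} to the arc between them. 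Your write-up, as it stands, has the right ideas but does not actually establish the bound $|\Sigma(1)|\leq 4$ for $D_2,D_2'$.
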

\begin{proof}
Assume the split toric surface $Y$ is $G$-minimal. Let $\Sigma$ be the fan structure of $Y$ and let $n$ be the number of rays of $\Sigma$. It is clear that for any subgroup $H$ of $G$ together with the restricted $H$-action on $Y$, the surface $Y$ is either $H$-minimal or the (successive) blow-ups of $H$-minimal toric surfaces.
\begin{description}
\item[$G=D_2$] (I) If $D_2$ fixes at least one maximal cone, then $\Sigma$ contains (I.1) rays $(1,0)$, $(0,1)$, $(-1,-1)$ where $D_2$ fixes the maximal cone generated by $(1,0)$, $(0,1)$ or (I.2) rays $(1,0)$, $(0,1)$, $(-1,0)$, $(0,-1)$ where $D_2$ fixes the maximal cones generated by $(1,0)$, $(0,1)$ and by $(-1,0)$, $(0,-1)$. (II) Otherwise $\Sigma$ contains rays $\pm (1,1)$, and the rays counterclockwise before and after $(1,1)$ must be $(a+1,a)$ and $(a,a+1)$, respectively. By Lemma \ref{odalem}, it is easy to see that if $\Sigma$ contains more rays in any of the above cases, then $Y$ admits a $D_2$-set of pairwise disjoint $(-1)$-curves.  Thus, $Y$ is isomorphic to (I.1) $\P^2$; (I.2) $\P^1\times \P^1$; (II) $F_{2a+1}$. Since $F_1$ has a $D_2$-invariant $(-1)$-curve, it is not minimal. So we have $a\geqslant 1$.
\item[$G=D_2'$] $\Sigma$ contains rays $\pm (1,0)$, and the rays counterclockwise before and after $(1,0)$ must be $(a,-1)$ and $(a,1)$, respectively. By Lemma \ref{odalem}, $\Sigma$ contains no other rays. Thus, $Y$ is isomorphic to $F_{2a},a\geqslant 0$.
\item[$G=C_2$] Let $x,y\in\Sigma$ be two adjacent rays. Then $\Sigma$ should have rays $x,y,-x,-y$ where the minimal generators of $x,y$ form a basis of $\Z^2$ and by Lemma \ref{odalem}, it contains no other rays. Thus, $Y\cong\P^1\times\P^1$.
\item[$G=C_4, D_4, D_4', D_8$] Since $C_2$ is a subgroup of $C_4, D_4, D_4', D_8$, we have $Y\cong\P^1\times\P^1$ or its blow-ups. Since the group of fan automorphisms of $\P^1\times\P^1$ is $D_8$ which contains $C_4, D_4, D_4'$, the minimal $C_2$-surface $\P^1\times\P^1$ is already a $G$-surface for $G= C_4, D_4, D_4', D_8$ and must be $G$-minimal. Thus, $Y\cong\P^1\times\P^1$.
\end{description}

For cases $G=C_t,t>2$. Recall that $t\,|\,n$ and by Lemma \ref{blow down}, $n\leqslant\max\{4,t\}$.

\begin{description}
\item[$G=C_3$] $3\,|\,n, n\leqslant 4$, so $n=3$ and $Y\cong\P^2$.
\item[$G=D_6$] $C_3\subset D_6$ implies that $Y$ is either $\P^2$ or its blow-ups. Since the group of fan automorphisms is $D_6$, we have $Y\cong\P^2$. 
\end{description}

For cases $G\supseteq C_3$, observe that if $Y$ is not $\P^2$, then it must be the blow-up of $S$ where $S$ is the blow-up of $\P^2$ along three torus invariant points.

\begin{description}
\item[$G=C_6, D_6', D_{12}$] $C_3\subset D_6'\subset D_{12}$ and $C_3\subset C_6\subset D_{12}$ imply that $Y$ is either $\P^2$ or the blow-up of $\P^2$. Since the group of fan automorphisms of $\P^2$ is $D_6$, $Y$ can not be $\P^2$. Thus, $Y$ is either $S$ or its blow-up. We have $Y\cong S$ because the group of fan automorphisms of $S$ is $D_{12}$.
\end{description}
\end{proof}
\begin{lem}\label{cb}
Let $X$ be a toric surface that is a form of $F_a, a\geqslant 1$. Then $X$ is a $\P^1$-bundle over a smooth conic curve. If $X$ has a rational point, then $X\cong F_a$.
\end{lem}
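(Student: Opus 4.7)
The plan is to exploit the uniqueness of the negative section on $F_a$ for $a \geq 1$: this forces both the $\P^1$-bundle map $F_a \to \P^1$ and its negative section to be Galois-equivariant, so both descend over $k$. Lemma \ref{toricmorphism} provides the descent of the map itself, while descent of the section upgrades the resulting $X \to Y$ to a Zariski-locally trivial $\P^1$-bundle over a form of $\P^1$, which is a smooth conic. The second assertion then reduces to the splitting of rank-$2$ bundles on $\P^1_k$.

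I would start by computing $\Aut_\Sigma(F_a)$. In the standard fan with rays $u_1=(1,0)$, $u_2=(0,1)$, $u_3=(-1,a)$, $u_4=(0,-1)$, the self-intersection sequence is $(0,-a,0,a)$. Since $a \geq 1$, the values $-a$ and $a$ occur exactly once, so every $\phi \in \Aut_\Sigma(F_a)$ fixes $u_2$ and $u_4$, and a direct check shows that the only remaining nontrivial possibility is the involution $\bigl(\begin{smallmatrix} -1 & 0 \\ a & 1 \end{smallmatrix}\bigr)$ swapping $u_1$ and $u_3$. Hence $\Aut_\Sigma(F_a) \cong \Z/2$. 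The toric morphism $F_a \to \P^1$ coming from the lattice projection $(x,y) \mapsto x$ is equivariant for this involution (which descends to $x \mapsto -x$ on the target), so Lemma \ref{toricmorphism} produces a descent $\pi \colon X \to Y$, where $Y$ is a form of $\P^1$, hence a smooth conic. Because $u_2$ is Galois-fixed, the divisor $D_2$ on $(F_a)_l$ descends to a divisor $C \subset X$ over $k$; after base change to $l$, the restriction $D_2 \to \P^1$ is the negative section and in particular an isomorphism, so by faithfully flat descent $C \to Y$ is an isomorphism over $k$. Thus $\pi$ admits a $k$-section, and combined with \'etale-local triviality inherited from $(F_a)_l \to \P^1_l$ via the splitting field, this forces $X = \mathbb{P}_Y(\mathcal{E})$ for a rank-$2$ vector bundle $\mathcal{E}$ on $Y$, i.e.\ a Zariski-locally trivial $\P^1$-bundle over the conic.

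For the final assertion, if $X(k) \neq \emptyset$ then its image in $Y$ is a rational point on the conic, so $Y \cong \P^1_k$. Every rank-$2$ bundle on $\P^1_k$ splits as $\O(c) \oplus \O(d)$, giving $X \cong F_{|c-d|}$; comparing with $X_{k^s} \cong F_a$ and using that $F_b \cong F_a$ forces $b = a$ for $a, b \geq 0$, we conclude $X \cong F_a$.

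The step I expect to require the most care is the clean application of Lemma \ref{toricmorphism}, which entails matching the torsor $U_1 \in H^1(k, T)$ for $X$ with a compatible $U_2$ on $Y$, together with the subsequent fpqc descent of the section (which implicitly uses that a Severi-Brauer fibration of relative dimension $1$ with a section is Zariski-locally trivial). A secondary worry is the boundary case $a = 1$, where $F_1$ is itself non-minimal over $k^s$; however, the uniqueness of the $(-1)$-curve on $F_1$ still pins down $u_2$ as Galois-fixed, so the argument goes through unchanged.
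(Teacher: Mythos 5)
Your proof is correct and takes essentially the same route as the paper: descend the fibration $F_a \to \P^1$ via Lemma \ref{toricmorphism} to get $X \to C$ with $C$ a smooth conic, descend a Galois-fixed torus-invariant section, and conclude that $X = \P(\mathcal{E})$ over $C$, with the rational-point case handled by splitting rank-$2$ bundles on $\P^1_k$. The only cosmetic differences are that you use the negative section $D_2$ where the paper uses the positive section $D_4 = (0,-1)$ (both rays are fixed, so either works), and you explicitly compute $\Aut_\Sigma(F_a) \cong \Z/2$, which the paper leaves implicit.
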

\begin{proof}
Let $X$ correspond to $(\rho_1,\Sigma_1,U_1)$ and let $\Sigma_1$ be the fan of $F_a$ with rays $(1,0)$, $(0,1)$, $(-1,a)$, $(0,-1)$. Let $\bar{\phi}: \Z^2\to\Z$ be the projection to the first factor, which corresponds to $\phi: F_a\to \P^1$. Let $\rho_2=\det\circ \rho_1: \Gamma\to\GL(1,\Z)$. Either $\rho_1$ is trivial or $\rho_1$ permutes the rays $(1,0), (-1,a)$. Then $\bar{\phi}$ is Galois equivariant with respect to $\rho_1$ and $\rho_2$. By Lemma \ref{toricmorphism}, the map $\phi$ descends to $\varphi: X\to C$.  As a form of $\P^1$, $C$ is a smooth plane conic curve (\cite[Corollary 5.4.8]{csa} for characteristic not $2$ and \cite[\S 45A]{ekmqf} for any characteristic).

Let $D$ be the divisor corresponding to the ray $(0,-1)$. Then $D$ is a Galois invariant section of the bundle $\phi: F_a\to \P^1$. Thus, $D$ descends to a section $D'$ of $\varphi: X\to C$. Moreover, $F_a\cong\P(\phi_* \O_{F_a}(D))$ descends to $X\cong\P(\varphi_*\O_X(D'))$. Thus, $X$ is a $\P^1$-bundle over $C$. If $X$ has a rational point, so does $C$. Therefore, $C\cong\P^1$ and $X\cong F_a$.
\end{proof}
By Proposition \ref{surface}, a minimal smooth projective toric surface $X$ is a form of (i) $F_a, a\geqslant 2$; (ii) $\P^2$; (iii) $\P^1\times \P^1$; (iv) $\Bl_{p_1, p_2,p_3} (\P^2)$ where $p_1,p_2,p_3$ are not collinear. Furthermore, we have
\begin{thm}\label{ms}
The surface $X$ is a minimal smooth projective toric surface if and only if X is \hypertarget{i}{(i)} a $\P^1$-bundle over a smooth conic curve but not a form of $F_1=\text{Proj}(\O_{\P^1}\oplus\O_{\P^1}(1))$; \hypertarget{ii}{(ii)} the Severi-Brauer surface; \hypertarget{iii}{(iii)} an involution surface; \hypertarget{iv}{(iv)} the del Pezzo surface of degree $6$ with Picard rank $1$.
\end{thm}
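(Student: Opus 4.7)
The plan is to combine Proposition \ref{surface} with Lemma \ref{cb} and a case-by-case analysis of the Galois action. The discussion immediately preceding the theorem already lists four possibilities for $X_{k^s}$, namely forms of $F_a$ ($a \geq 2$), $\P^2$, $\P^1 \times \P^1$, or $\Bl_{p_1,p_2,p_3}(\P^2)$. The task is to identify, for each geometric type, which $k$-forms appear as minimal toric surfaces and match them with items (i)--(iv), and then verify that every surface in (i)--(iv) is indeed minimal and toric.

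For $X_{k^s} \cong F_a$ with $a \geq 2$, Lemma \ref{cb} directly presents $X$ as a $\P^1$-bundle over a smooth conic, landing in (i); the restriction $a \neq 1$ is automatic. Forms of $F_1$ are excluded by a direct fan argument: the unique $(-1)$-section of $F_1$ corresponds to the unique ray of self-intersection $-1$, which is therefore fixed by every fan automorphism, so the section is Galois-invariant and descends to a $(-1)$-curve over $k$, contradicting minimality. For $X_{k^s} \cong \P^2$, any form is by definition a Severi-Brauer surface, giving (ii).

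For $X_{k^s} \cong \P^1 \times \P^1$, I would split on whether the Galois action swaps the two rulings. If it preserves both, the two projections descend via Lemma \ref{toricmorphism} to fibrations onto conic forms of $\P^1$, and the section argument of Lemma \ref{cb} realizes $X$ as a $\P^1$-bundle over such a conic, placing it in (i). If the Galois action swaps the rulings (via a conjugacy class containing the matrix $C$ of the paper's table), then the two factors are exchanged and $X$ arises as a Weil restriction $\R_{K/k}\text{SB}(B)$ for an étale quadratic extension $K/k$, i.e., an involution surface in the sense of Example \ref{inv}, yielding (iii).

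The main obstacle is the case $X_{k^s} \cong \Bl_{p_1,p_2,p_3}(\P^2)$. Here the six $(-1)$-curves form a hexagon and $\Pic(X_{k^s})$ has rank $4$; minimality forbids any Galois-stable set of pairwise disjoint $(-1)$-curves. Inspecting the subgroups of the fan automorphism group $D_{12}$ together with the combinatorics of the hexagon, exactly the actions of $C_6$, $D_6'$, and $D_{12}$ (the groups from Proposition \ref{surface} giving $Y = S$) mix the six curves so that no disjoint invariant subset exists, and these are precisely the Galois actions for which $\Pic(X_{k^s})^\Gamma$ has rank $1$. This identifies item (iv). The converse direction is then routine: each surface in (i)--(iii) has either no $(-1)$-curves on $X_{k^s}$ (items (ii) and (iii)) or only curves whose potential contraction is ruled out by the exclusion of $F_1$ in (i), while in (iv), Picard rank one prevents any nontrivial Galois-invariant blowdown.
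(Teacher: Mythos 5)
Your overall outline is the same as the paper's: start from Proposition \ref{surface} to restrict $X_{k^s}$ to one of $F_a$ ($a\ge 2$), $\P^2$, $\P^1\times\P^1$, or $S=\Bl_{p_1,p_2,p_3}(\P^2)$, and then identify each family of forms. The treatment of the $F_a$ ($a\ge 2$) and $\P^2$ cases is correct, and your direct verification for the del Pezzo case (computing the fixed part of $\Pic(S)$ under $C_6$, $D_6'$, $D_{12}$ and matching it with $G$-minimality) is a legitimate replacement for the paper's citation of the general fact from \cite{ctkarm} that a minimal del Pezzo of degree $\neq 8$ has Picard rank $1$.

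The genuine gap is in the $\P^1\times\P^1$ case. You split on whether the Galois action swaps the two rulings and claim that in the ``no swap'' case the section argument of Lemma \ref{cb} realizes $X$ as a $\P^1$-bundle over a conic. This fails: for $F_a$ with $a\ge 1$, Lemma \ref{cb} uses that the ray $(0,-1)$ is the unique $(-a)$-section, hence fixed by any fan automorphism and by Galois. For $F_0=\P^1\times\P^1$ each projection has two torus-invariant sections (e.g.\ those corresponding to $(0,1)$ and $(0,-1)$), and these can be swapped by the Galois action even when the rulings are preserved in $\Pic$ --- for instance by $-I$ or by $C'=\mathrm{diag}(1,-1)$, both of which lie in $D_4'$. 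Concretely, take $X=C_1\times C_2$ for two smooth conics with distinct nontrivial classes $[C_1]\neq[C_2]$ in $\Br(k)$; the Galois action on the fan factors through $D_4'$ and preserves both rulings, and $X$ is a minimal toric surface (since $\P^1\times\P^1$ has no $(-1)$-curves). But by Amitsur's theorem, $(C_2)_{k(C_1)}$ is nontrivial, so neither projection of $X$ is Zariski-locally trivial, and over $k^s$ any $\P^1$-bundle structure must be one of the two projections; hence $X$ is not a $\P^1$-bundle over a conic at all. The correct move --- which the paper makes --- is to invoke Example \ref{inv}: \emph{every} form of $\P^1\times\P^1$ (swap or no swap) is an involution surface $\R_{K/k}\mathrm{SB}(B)$, with $K=k\times k$ in the non-swap case, and so lands in item (iii). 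Some of those (when one of the conics is trivial or the two classes agree) also happen to satisfy (i); the overlap is harmless since the theorem is an ``if and only if'' over the union of the four items, but your argument as written would misclassify $C_1\times C_2$ and does not actually cover it.

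Two smaller remarks. The exclusion of $F_1$ that you argue via the unique Galois-fixed $(-1)$-section is correct, but Proposition \ref{surface} already rules it out (it lists $F_{2a+1}$ for $a\ge 1$ under $D_2$ and $F_{2a}$ for $a\ge 0$ under $D_2'$), so this is a nice sanity check rather than a needed step. And in the converse direction your phrase ``only curves whose potential contraction is ruled out by the exclusion of $F_1$'' is a little loose: once $F_1$ is excluded, $X_{k^s}\cong F_a$ with $a\neq 1$ simply has no $(-1)$-curves at all, so minimality is automatic, matching items (ii) and (iii).
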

\begin{proof}
It follows from Lemma \ref{cb}, Example \ref{sb}, \ref{dp}, \ref{inv} and the fact that a minimal del Pezzo surface of degree not equal to $8$ has Picard rank $1$ \cite[Theorem 2.4]{ctkarm}.
\end{proof}
\section{\texorpdfstring{$K_0$}{K0} of Toric Surfaces}\label{k0s}
In this section, we will show that $K_0(X_{k^s})$ is a permutation $\Gamma$-module for $X$ a smooth projective toric surface over $k$.  First recall how $K_0$ behaves under blow-ups:

\begin{thm}\cite[VII 3.7]{sga6}
Let $X$ be a noetherian scheme and $i:Y\to X$ a regular closed immersion of pure codimension $d$. Let $p:X'\to X$ be the blow up of $X$ along $Y$ and $Y'=p^{-1}Y$. There is a split short exact sequence
\begin{equation*}
0\to K_0(Y)\overset{u}{\to}K_0(Y')\oplus K_0(X)\overset{v}{\to}K_0(X')\to 0
\end{equation*}
and the splitting map $w$ for $u$ is given by $w(y',x)=p|_{Y' *}(y'), y'\in K(Y'), x\in K(X)$.
\end{thm}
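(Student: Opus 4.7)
The plan is to follow the approach of SGA 6 VII.3.7: build $K_0(X')$ out of $p^*K_0(X)$ and $j_*K_0(Y')$, and identify their overlap with $K_0(Y)$ via a carefully chosen map $u$. First I set notation. Let $q = p|_{Y'}\colon Y' \to Y$ and $j\colon Y' \hookrightarrow X'$; because $i$ is a regular closed immersion of codimension $d$, the exceptional divisor $Y' = \mathbb{P}(\mathcal{C}_{Y/X})$ is a projective bundle of relative dimension $d-1$ over $Y$, and $j$ is a Cartier inclusion (a regular immersion of codimension $1$). The map $v$ is $v(y',x) = j_*(y') + p^*(x)$. I must produce $u$ so that $v \circ u = 0$ and $w \circ u = \mathrm{id}$, and then verify exactness in the middle and surjectivity of $v$.

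To define $u$, I would invoke the excess intersection formula for the blow-up square:
\[
p^*\, i_*(y) \;=\; j_*\bigl( q^*(y) \cdot \lambda_{-1}(\eta^\vee) \bigr),
\]
where $\eta$ is the universal rank-$(d-1)$ quotient on the projective bundle $q$. This formula is the main structural input in SGA 6 VII and is proved there by resolving $\mathcal{O}_Y$ on $X$ via a Koszul complex, pulling back to $X'$, and computing $p_*$ termwise. With it in hand, set
\[
u(y) \;:=\; \bigl( q^*(y) \cdot \lambda_{-1}(\eta^\vee),\ -\,i_*(y)\bigr).
\]
Then $v \circ u = 0$ is immediate. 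Meanwhile $w\circ u(y) = q_*\!\bigl(q^*(y) \cdot \lambda_{-1}(\eta^\vee)\bigr) = y \cdot q_*\lambda_{-1}(\eta^\vee) = y$ by the projection formula together with the standard projective bundle identity $q_*\lambda_{-1}(\eta^\vee) = 1$ (one checks this fiberwise on $\mathbb{P}^{d-1}$ by computing $\sum_i (-1)^i \chi(\mathbb{P}^{d-1}, \wedge^i \eta^\vee)$). This confirms the formula for the splitting $w$ and in particular the injectivity of $u$.

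The substantive step is to verify $\ker v = \mathrm{im}\,u$ and that $v$ is surjective. My plan is to combine three ingredients: (a) the projective bundle formula for $q$, which gives $K_0(Y') \cong \bigoplus_{m=0}^{d-1} q^*K_0(Y)\cdot \xi^m$ as a free $K_0(Y)$-module with $\xi = [\mathcal{O}_{Y'}(-1)]$; (b) the self-intersection formula $j^* j_*(\alpha) = \alpha \cdot \bigl(1 - [\mathcal{O}_{Y'}(-Y')]\bigr)$ on the Cartier divisor $Y'$; and (c) the localization sequence $K_0(Y') \xrightarrow{j_*} K_0(X') \to K_0(X'\smallsetminus Y') \to 0$, combined with the isomorphism $X'\smallsetminus Y' \cong X\smallsetminus Y$ and the analogous sequence for $(X,Y)$, together with the retraction $p_* p^* = \mathrm{id}$. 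Ingredient (c) reduces surjectivity of $v$ to showing every class in $K_0(X')$ agrees with $p^*p_*(\alpha) \in p^* K_0(X)$ modulo $j_* K_0(Y')$; (a) and (b) then express everything in terms of $K_0(Y)$.

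The main obstacle is the exactness at the middle: one must show that the only relations among the generators $j_*(\xi^m \cdot q^*y)$ and $p^*(x)$ are those forced by $u(y)$ for $y \in K_0(Y)$. This is the combinatorial heart of SGA 6 VII.3.7 and requires using (b) to reduce higher powers of $\xi$ modulo $\lambda_{-1}(\eta^\vee)$, after which the projective bundle decomposition (a) makes the identification $\ker v = \mathrm{im}\,u$ essentially an assertion about Chern classes of $\mathcal{O}_{Y'}(-1)$ and $\eta$ on the fibers. I would follow the SGA 6 bookkeeping essentially verbatim at this point.
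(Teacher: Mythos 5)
The paper does not prove this statement; it is cited verbatim from SGA~6 \cite[VII~3.7]{sga6} and then applied. Your sketch faithfully reproduces the architecture of Berthelot's argument in SGA~6: the excess intersection (or ``cl\'e'') formula $p^*i_*(y) = j_*(q^*y\cdot\lambda_{-1}(\eta^\vee))$ built from the Koszul resolution, the projective bundle theorem for $q\colon Y'=\mathbb{P}(\mathcal{C}_{Y/X})\to Y$, the self-intersection formula on the Cartier divisor $Y'\subset X'$, and localization comparing $X'\smallsetminus Y'$ with $X\smallsetminus Y$. Your choice of $u$ is consistent with the given splitting $w=q_*$, and the identity $q_*\lambda_{-1}(\eta^\vee)=1$ that you invoke is correct (it is the K-theoretic avatar of the fact that the Euler class of the excess bundle pushes forward to the fundamental class of $Y$, checked fiberwise on $\mathbb{P}^{d-1}$). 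The one caveat is that the exactness in the middle --- the heart of the theorem --- is deferred with ``follow the SGA~6 bookkeeping essentially verbatim''; that step requires the explicit relation $\xi^d = \lambda_{-1}(\eta^\vee)\cdot(\text{unit}) + (\text{lower terms in } \xi)$ coming from the tautological sequence, which deserves to be spelled out rather than black-boxed, but the plan is sound and matches the cited source.
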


This gives us an isomorphism $K_0(X')\cong \ker(w)\cong K_0(X)\oplus \bigoplus^{d-1} K_0(Y)$ which fits into the split short exact sequence
\begin{equation*}
0\to K_0(X)\overset{p^*}{\to}K_0(X')\to \bigoplus^{d-1} K_0(Y)\to 0.
\end{equation*}
Now let $X$ be a smooth projective toric $T$-surface over $k$ that splits over $l$. Let $Y$ be a $T$-invariant reduced zero-dimensional subscheme of $X$. Then $Y_l$ is a disjoint union of $T_l$-invariant points permuted by $G=\Gal(l/k)$. Set $X'=\text{Bl}_Y X$. We have
\begin{equation*}
0\to K_0(X_l)\overset{p^*}{\to}K_0(X'_l)\to K_0(Y_l)=\bigoplus\Z\to 0
\end{equation*}
where $p^*$ is a $G$-homomorphism. Each $\Z$ is generated by $\O_{E_i}(-1)$  where $E_i$ are the exceptional divisors corresponding to the points in $Y_l$ and $G$ permutes $E_i$ the same way as $G$ permutes the points in $Y_l$.

Note that $\O_{E_i}(-1)=\O_{X_l'}(E_i)-\O_{X_l'}$ in $K_0$. If we know $K_0(X_l)$ has a permutation $G$-basis $\gamma$, then $K(X'_l)$ has a permutation $G$-basis consisting of $p^*\gamma$ (total transforms of $\gamma$) and the $\O(E_i)$.

\begin{thm}\label{K0-surface}
Let $X$ be a smooth projective toric $T$-surface over $k$ that splits over $l$ and $G=\Gal(l/k)$. Then $K_0(X_l)$ has a permutation $G$-basis of line bundles on $X_l$.
\end{thm}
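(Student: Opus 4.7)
The plan is to combine the classification of minimal toric surfaces (Theorem \ref{ms}) with the blow-up formula for $K_0$ just recalled. As observed above, if $X'$ is the blow-up of a smooth projective toric surface $X$ along a $T$-invariant reduced zero-dimensional subscheme $Y$ and $K_0(X_l)$ already admits a permutation $G$-basis $\gamma$ of line bundles, then $K_0(X'_l)$ has the permutation $G$-basis obtained by pulling $\gamma$ back and adjoining the line bundles $\O(E_i)$ of the exceptional divisors over $Y_l$, which $G$ permutes exactly as it permutes the points of $Y_l$. Since every smooth projective toric surface arises from a minimal one by such a chain of toric blow-ups (\S\ref{mts}), it suffices to prove the theorem when $X$ is minimal.

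For the easier cases of Theorem \ref{ms}, standard bases suffice. When $X_l \cong \P^2$ the Galois group acts trivially on $\Pic(X_l) = \Z H$, so $\{\O, \O(H), \O(2H)\}$ is a $G$-basis of three singleton orbits. When $X_l \cong \P^1\times\P^1$ the class $\O(1,1)$ is always $G$-invariant and $\{\O(1,0), \O(0,1)\}$ is either one orbit of size two (if $G\subseteq D_8$ swaps the two rulings) or two singletons, so $\{\O, \O(1,0), \O(0,1), \O(1,1)\}$ works for case (iii). For case (i), Proposition \ref{surface} shows that in every form $X_l \cong F_a$ with $a\neq 1$ the Galois group fixes both section classes and at worst identifies the two fibre divisors within a single class; hence $G$ acts trivially on $\Pic(X_l)=\Z F\oplus\Z E$ and $\{\O, \O(F), \O(E), \O(E+F)\}$ is a $G$-basis.

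The main case is (iv), where $X_l \cong \Bl_{p_1,p_2,p_3}(\P^2)$, $K_0(X_l)$ has rank $6$, and $G \in \{C_6, D_6', D_{12}\}$ acts with $\Pic(X_l)^G = \Z\cdot(-K_X)$. The principal obstacle is choosing line bundles whose classes span not only $\Pic$ but also the point-class direction in $K_0$, while remaining stable under the dihedral $G$-action. I propose
\[
\{\O,\ \O(H),\ \O(2H - E_1 - E_2 - E_3),\ \O(H - E_1),\ \O(H - E_2),\ \O(H - E_3)\}.
\]
To verify linear independence in $K_0(X_l) \cong \Z^6$, note that the five nontrivial Picard classes span $\Pic(X_l)$ with a single integral relation $2H - \sum_k E_k = -H + \sum_i (H - E_i)$. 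Riemann--Roch then gives $\chi(\O) = 1$, $\chi(\O(H)) = \chi(\O(2H - \sum_k E_k)) = 3$, and $\chi(\O(H-E_i)) = 2$, so the corresponding integer combination of these six line bundles has zero rank, zero $\Pic$-image, and Euler characteristic $1$. It therefore represents the generator $[\O_{\mathrm{pt}}]$ of the codimension-two piece $F^2 K_0(X_l) = \Z$, which pins down the last dimension and exhibits the six classes as a $\Z$-basis. For $G$-equivariance, $G$ always contains both an index-permutation subgroup on $\{E_1, E_2, E_3\}$ and the Cremona involution $H\mapsto 2H-\sum_k E_k$, $E_i\mapsto H-E_j-E_k$. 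A direct computation shows the Cremona fixes each $\O(H-E_i)$ and swaps $\O(H)\leftrightarrow \O(2H-\sum_k E_k)$, while the index-permutation cycles the three $\O(H-E_i)$ and fixes the other two. The orbit sizes are therefore $1+2+3 = 6$, matching Blunk's decomposition $X \cong k \times P \times Q$ in $\C$ (Example \ref{dp}); this compatibility with the algebra decomposition guides the choice of basis.
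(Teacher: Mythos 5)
Your proposal follows the same overall strategy as the paper: reduce to minimal toric surfaces via the blow-up formula for $K_0$, then verify the four minimal cases of Theorem \ref{ms} explicitly, checking that the bases are invariant under the full fan automorphism group $\Aut_{\Sigma}$ (so that any $G\subseteq\Aut_{\Sigma}$ inherits the permutation property). The bases you choose are exactly the duals of the paper's (e.g., $\O(H)$ in place of $J=\O(-H)$), which is purely cosmetic. The genuine variation is in case (iv): where the paper appeals to Blunk's \cite[Theorem 4.2]{mb}, you give a self-contained verification that the six line bundles are a $\Z$-basis by tracking rank, Picard class, and Euler characteristic through the codimension filtration on $K_0$ and checking that the one relation among the Picard classes lifts to the point class $[\O_{\mathrm{pt}}]$ in $F^2K_0\cong\Z$; your Riemann--Roch values $\chi(\O(H))=\chi(\O(2H-\sum E_k))=3$, $\chi(\O(H-E_i))=2$ are correct, and this gives a clean alternative to citing Blunk.

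One imprecision in case (iv): the assertion that ``$G$ always contains both an index-permutation subgroup on $\{E_1,E_2,E_3\}$ and the Cremona involution'' fails for $G=D_6'=\langle A^2,-C\rangle$, which does not contain $-I$ (the Cremona). This does no harm: what you actually verified is that the basis is stable under the full group $D_{12}$, which suffices for every $G\subseteq D_{12}$. (As it happens, the orbit sizes are still $1,2,3$ for $D_6'$ as well, since the element $-C$ still interchanges $H$ and $2H-\sum E_k$, but the stated justification needs to be replaced.) Likewise in case (i) the claim that $G$ acts trivially on $\Pic(F_a)$ is correct but deserves the one-line reason that $D_1\sim D_3$ in $\Pic$, which is in effect what the paper's relation $J_3=J_1$ records.
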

\begin{proof}
By previous discussion and the fact that $G\subseteq \Aut_{\Sigma}$, it suffices to prove that $K_0(X_l)$ has a permutation $\Aut_{\Sigma}$-basis of line bundles for $X$ minimal. By Theorem \ref{ms}, we only need to consider the following cases for $X_l$:
\begin{description}
\item[\hyperlink{i}{(i)}] $F_a, a\geqslant 2$, $\Aut_{\Sigma}=S_2$.
\item[\hyperlink{ii}{(ii)}] $\P^2$, $\Aut_{\Sigma}=D_6$.
\item[\hyperlink{iii}{(iii)}] $\P^1\times\P^1$, $\Aut_{\Sigma}=D_8$.
\item[\hyperlink{iv}{(iv)}] del Pezzo surface of degree $6$, $\Aut_{\Sigma}=D_{12}$.
\end{description}

We will use Equation (\ref{rel2}) in Theorem \ref{K0} with $f=(1,0)$ and $(0,1)$ in producing relations and finding a permutation basis. We will write $x_i$ for rays in the fan and $J_i=\O(-D_i)$ where $D_i$ are the divisors corresponding to $x_i$.

\textbf{\hypertarget{i'}{(i)}}: Rays $x_1=(1,0)$, $x_2=(0,1)$, $x_3=(-1,a)$, $x_4=(0,-1)$. $S_2$ fixes $x_2,x_4$ and permutes $x_1,x_3$. Relations are:
\begin{equation*}
    \left\{
    \begin{aligned}
    J_3 &=J_1\\
    J_4 &=J_2J_3^a=J_1^aJ_2
    \end{aligned}
    \right.
\end{equation*}
Let $x$ be a rational point of $X_l$. Then the sheaf $\O_x=(1-J_1)(1-J_2)$ in $K_0$.  For any $m\in\Z$, consider the exact sequence 
\begin{equation*}
0\to \O(-(m+1)D_1-D_2)\to \O(-mD_1-D_2)\to \O_{D_1}(-mD_1-D_2)\to 0. 
\end{equation*}
Since $D_1\cong\P^1$ and $\deg[\O_{D_1}(-mD_1-D_2)]=D_1\cdot(-mD_1-D_2)=-1$, we have $\O_{D_1}(-mD_1-D_2)=\O_{D_1}(-1)=\O_{D_1}-\O_x$ in $K_0$. Hence $J_1^{m+1}J_2=J_1^mJ_2+J_1J_2-J_2$ in $K_0$. This implies $J_4=J_1^aJ_2$ belongs to the abelian group generated by $1,J_1,J_2,J_1J_2$. By Theorem \ref{K0}, we have $K_0$ as an abelian group is generated by $1,J_1,J_2,J_1J_2$. They form a basis of $K_0$ because the rank of $K_0$ ($=$ the number of maximal cones in the fan) is $4$. Thus, $K_0$ has a permutation basis $1,J_1,J_2,J_1J_2$. (Alternatively, this basis can easily be obtained from the projective bundle theorem \cite[\S8, Theorem 2.1]{quillen} because $F_a$ is a $\P^1$-bundle over $\P^1$.)

\textbf{\hypertarget{ii'}{(ii)}}: Rays $x_1=(1,0)$, $x_2=(0,1)$, $x_3=(-1,-1)$. $D_6$ rotates $x_i$ and reflects along lines in $x_1,x_2,x_3$. Relations are $J_1=J_2=J_3$. A permutation basis is $1,J_1,J_1^2$.

\textbf{\hypertarget{iii'}{(iii)}}: Rays $x_1=(1,0)$, $x_2=(0,1)$, $x_3=(-1,0)$, $x_4=(0,-1)$. $D_8$ rotates $x_i$ and reflects along lines in $x_1,x_2,(1,1),(-1,1)$. Relations are:
\begin{equation*}
    \left\{
    \begin{aligned}
    J_3 &=J_1\\
    J_4 &=J_2
    \end{aligned}
    \right.
\end{equation*}
A permutation basis is $1,J_1,J_2,J_1J_2$.

\textbf{\hypertarget{iv'}{(iv)}}: Rays $x_1=(1,0)$, $x_2=(0,1)$, $x_3=(-1,-1)$, $y_1=(-1,0)$, $y_2=(0,-1)$, $y_3=(1,1)$. $D_{12}\cong S_2\times S_3$ ($S_2,S_3$ permutation groups), $S_2=\<-1\>$ switches between $x_i$ and $y_i$. $S_3$ permutes the pair of rays $(x_i,y_i)$. Let $D_i'$ be the divisors corresponding to the rays $y_i$ and let $J_i'=\O(-D_i')$. Relations are
\begin{equation*}
\frac{J_1}{J_1'}=\frac{J_2}{J_2'}=\frac{J_3}{J_3'}
\end{equation*}
As proved in \cite[Theorem 4.2]{mb}, we have a permutation basis $1,R_1,R_2,R_3,Q_1,Q_2$ where
\begin{equation*}
    \left\{
    \begin{aligned}
    R_1 &= J_1J_2'\\
    R_2 &= J_2J_3'\\
    R_3 &= J_3J_1'\\
    Q_1 &= J_1J_2J_3'\\
    Q_2 &= J_1'J_2'J_3
    \end{aligned}
    \right.
\end{equation*}
\end{proof}

\begin{rem}
The difficulties in generalizing Theorem \ref{K0-surface} to higher dimensions (at least using the approach of this paper) are: 

(1) The classification of non-conjugacy classes of finite subgroups of $\GL(n,\Z)$ is difficult and not complete. It often only provides algorithms and requires the help of a computer even for small $n$. Also, the number of those finite subgroups grows very fast relative to $n$. For example, there are total of $73$ for $\GL(3,\Z)$ and $710$ for $\GL(4,\Z)$.  

(2) The $K$-group $K_0(X_l)$ in question may not stay a permutation module after blow-ups if $X$ is not a surface.
\end{rem}
\section{Construction of Separable Algebras}\label{sa}
Let $X$ be a smooth projective toric $T$-variety over $k$ that splits over $l$, and let $X^*$ be its associated toric model, see \S\ref{tv}. \cite[Theorem 5.7]{mp} states that there is a split monomorphism $u:X^*\to A$ in the motivic category $\C$ from $X^*$ to an \'{e}tale $k$-algebra $A$ and $u$ is represented by an element $Q$ in $\Pic(X^*\otimes_k A)$.  Using the invertible sheaf $Q$, a map $u': X\to B$ can be constructed out of $u$. Theorem 7.6 of the same work states that $u'$ is also a split monomorphism in $\C$. In this section, we will recall the construction of $u'$ and consider the case when $u$ is an isomorphism.

Write $X_A=X\otimes_k A$ and we have $f: X_l\to X^*_l$, a $T_l$-isomorphism. Consider the diagram:
\begin{equation}\label{endo}
\begin{tikzcd}
X_{A\otimes_k l} \arrow{r}{f_A} \arrow{d}{\pi_{X_A}} & X^*_{A\otimes_k l} \arrow{d}{\pi_{X^*_A}}\\
X_A  & X^*_A
\end{tikzcd}
\end{equation}

Let $P'=f^*(\pi^*_{X^*_A}(Q))$. Then $B=\End_{X_A}(\pi_{X_A*}(P'))\in\Br(A)$ and $u': X\to B$ is represented by $\pi_{X_A*}(P')$, namely $u'=\phi_*(P')\in K_0(X, B)$ where $\phi$ is the projection $X_{A\otimes_k l}\to X$.

The following criterion, which is \cite[Proposition 4.5]{mp}, checks when a toric model is isomorphic to an \'{e}tale algebra in $\C$:
\begin{prop}\label{toric model}
Let $X^*$ be a smooth projective toric model over $k$ that splits over $l$ and $G=\Gal(l/k)$. If $K_0(X^*_l)$ is a permutation $G$-module, then $X^*\cong\Hom_G(P, l)$ in the motivic category $\C$ for any permutation $G$-basis $P$ of $K_0(X^*_l)$.
\end{prop}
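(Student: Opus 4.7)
The plan is to construct an explicit morphism $u\colon X^{*}\to A$ in $\C$ together with a candidate inverse $v\colon A\to X^{*}$, using the permutation $G$-basis $P$ and its Euler-pairing dual, then to verify the compositions $u\circ v$ and $v\circ u$ are identities by reducing to the split situation over $l$.

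Writing $A=\Hom_G(P,l)$, we have $A\otimes_k l\cong l^P$ as $G$-equivariant $l$-algebras, with $G$ permuting the coordinates via $P$ and acting by the Galois action on $l$. Consequently $X^{*}_A\otimes_k l\cong\coprod_{p\in P} X^{*}_l$, and by Galois descent for locally free sheaves, $K_0(X^{*}_A)=\Hom_\C(X^{*},A)$ is identified with the $G$-invariant subgroup of $\bigoplus_{p\in P}K_0(X^{*}_l)$, where $G$ acts diagonally. Define $u$ to be the element whose $p$-component is the basis class $p\in K_0(X^{*}_l)$: the condition that $P$ is a permutation $G$-basis says exactly that $g\cdot p=gp$ in $K_0(X^{*}_l)$, matching the permutation of coordinates, so $u$ is $G$-invariant and gives a morphism $X^{*}\to A$ in $\C$.

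For $v$ I use the Euler-characteristic pairing $\chi(E,F)=\sum_i(-1)^i\dim\Ext^i(E,F)$ on $K_0(X^{*}_l)$, which is perfect over $\Z$ because $X^{*}_l$ is split smooth projective with free, finitely generated $K_0$ (via the Chern character and Poincar\'{e} duality on the Chow ring), and is $G$-equivariant. The $\chi$-dual basis $\{p^{\vee}\}_{p\in P}$ satisfies $g\cdot p^{\vee}=(gp)^{\vee}$ and is therefore again a permutation $G$-basis with the same underlying $G$-set. Packaged as in the definition of $u$, the element $v=(p^{\vee})_{p\in P}$ defines a morphism $A\to X^{*}$ in $\C$.

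Finally, to verify $u\circ v=\id_A$ and $v\circ u=\id_{X^{*}}$, I would reduce to checking the corresponding identities in $\C_l$. Over $l$, $A_l$ represents $\coprod_{p\in P}\Spec l$ in $\C_l$, and the composition formula yields $(u\circ v)_{p,q}=\pi_{*}(p\otimes q^{\vee})=\chi(p,q^{\vee})=\delta_{pq}$, so $u\circ v=\id_{A_l}$. For the other direction, the class of $v\circ u$ in $K_0(X^{*}_l\times_l X^{*}_l)$ is $\sum_{p\in P}p^{\vee}\boxtimes p$, and one must recognize this as the diagonal class $[\mathcal{O}_\Delta]$. I expect this last step to be the main obstacle: it is the $K$-theoretic assertion that $[\mathcal{O}_\Delta]$ is the canonical tensor realizing the identity endomorphism under the $\chi$-duality, and it follows from the K\"{u}nneth isomorphism $K_0(X^{*}_l\times_l X^{*}_l)\cong K_0(X^{*}_l)\otimes_{\Z}K_0(X^{*}_l)$, valid because $K_0(X^{*}_l)$ is finitely generated and torsion-free.
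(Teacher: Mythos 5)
The paper does not prove this proposition itself; it is cited from \cite[Proposition 4.5]{mp}, so there is no in-paper argument to compare against. Judged on its own, your strategy — build $u$ from the basis $P$, build $v$ from a dual basis, and check both compositions after base change to $l$ via the K\"unneth decomposition of $[\O_\Delta]$ — is a reasonable outline, but as written it has a genuine gap plus a smaller slip. The slip is the choice of pairing: the composition formula produces $\pi_*(p\otimes q^\vee)=\chi(X^*_l,\,p\otimes q^\vee)$, i.e.\ the pairing $(a,b)\mapsto\chi(X^*_l,a\otimes b)$, \emph{not} the Ext-pairing $\chi(a,b)=\sum(-1)^i\dim\Ext^i(a,b)$; the two differ by dualizing the first argument. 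You should take $p^\vee$ to be the dual basis for the tensor pairing (which is still $G$-equivariant and perfect, since $(-)^\vee$ is a $G$-equivariant involution on $K_0$), after which $\sum_p p\boxtimes p^\vee=[\O_\Delta]$ under K\"unneth is the correct identity.

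The genuine gap is the sentence ``by Galois descent for locally free sheaves, $\Hom_\C(X^*,A)$ is identified with the $G$-invariants of $\bigoplus_{p\in P}K_0(X^*_l)$.'' Unwinding it, this asserts that $K_0(X^*_\kappa)\to K_0(X^*_l)^{\Gal(l/\kappa)}$ is surjective for the intermediate fields $\kappa$ (the stabilizer fields of the orbits of $P$), and that is \emph{not} a formal consequence of descent for sheaves: for a nonsplit Severi-Brauer variety $X$ the class $[\O(1)]\in K_0(X_{k^s})^\Gamma$ does not lie in the image of $K_0(X)$. The surjectivity does hold for toric models, but it is a nontrivial theorem (it is what \cite[Corollary 5.8]{mp} supplies, and this paper uses that result explicitly in the lemma following Lemma~\ref{lb}); indeed it is essentially equivalent to the conclusion you are proving, since $X^*\cong A$ in $\C$ would itself give $K_0(X^*)\cong K_0(X^*_l)^G$. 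Without invoking that input, the very construction of $u$ and $v$ as elements of $\Hom_\C(X^*,A)$ and $\Hom_\C(A,X^*)$ is unjustified, because you need the classes $p$ and $p^\vee$ to descend from $K_0(X^*_l)$ to $K_0(X^*_{\kappa})$. A smaller point in the same vein: the final ``reduce to $\C_l$'' step tacitly uses injectivity of the base-change map $K_0(X^*\times X^*)\to K_0(X^*_l\times X^*_l)$ (and similarly for $A$), which should at least be remarked, e.g.\ via a transfer argument and torsion-freeness of $K_0$.
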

\begin{rem}
In particular, this implies that for any split smooth projective toric variety $Y$ over $k$, $Y\cong k^n$ in $\C$ where $n$ equals to the rank of $K_0(Y)$ (also equals to the number of maximal cones of the fan). Note that a smooth projective toric variety $Y$ over $k$ where the fan of $Y_l$ has no symmetry (i.e, $\Aut_{\Sigma}(Y_l)$ is trivial) is automatically split. 
\end{rem}
\begin{lem}\label{lb}
Let $X^*, G$ be the same as before. Then there is an isomorphism $u: X^*\to A$ in $\C$ where $A$ is an \'{e}tale k-algebra and $u$ is represented by an element $Q\in\Pic(X^*_A)$ if and only if $K_0(X^*_l)$ has a permutation $G$-basis of line bundles on $X^*_l$.
\end{lem}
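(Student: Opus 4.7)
The plan is to build on Proposition \ref{toric model}, which already establishes the equivalence between $K_0(X^*_l)$ being a permutation $G$-module and the existence of an isomorphism $X^* \cong A$ in $\C$ for some \'{e}tale $k$-algebra $A$. The refinement in Lemma \ref{lb} -- basis of line bundles versus line bundle representative -- is handled via Galois descent. In both directions $A$ splits over $l$ (since by Proposition \ref{toric model} any $A$ appearing here is $k$-algebra-isomorphic to $\Hom_G(P', l)$ for a permutation $G$-set $P'$), so $A \otimes_k l \cong l^P$ for $P = \Hom_{k\text{-alg}}(A, l)$ and $X^*_{A \otimes_k l} = \coprod_{p \in P} X^*_l$. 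Consequently, any $Q \in \Pic(X^*_A)$ base-changes to a tuple $(L_p)_{p \in P} \in \prod_P \Pic(X^*_l)$ satisfying the Galois descent condition $\sigma \cdot L_p = L_{\sigma p}$ for all $\sigma \in G$.

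For direction $(\Leftarrow)$: Given a permutation $G$-basis $\{L_p\}_{p \in P}$ of line bundles in $K_0(X^*_l)$, set $A = \Hom_G(P, l)$. The tuple $(L_p)_p$ satisfies the descent condition by the permutation hypothesis, so glues to a line bundle $Q \in \Pic(X^*_A)$. Proposition \ref{toric model} supplies an isomorphism $u: X^* \cong A$ in $\C$, and I verify that $Q$ represents $u$ by tracing the construction of $u$ in \cite[Proposition 4.5]{mp} as $u = \bigoplus_O \pi_{O*}(L_O)$ (indexed by $G$-orbits $O$ of $P$, with $\pi_O: (X^*_{k_O})_l \to X^*$) and matching it with the morphism obtained from $Q$ via $\Hom_\C(X^*, A) \cong K_0(X^*_A)$.

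For direction $(\Rightarrow)$: Given iso $u: X^* \to A$ represented by $Q$, the inverse $v: A \to X^*$ has some representative $R \in K_0(X^*_A)$ (not necessarily a line bundle); base-change yields line bundles $(L_p)_p$ from $Q_l$ and classes $(R_p)_p$ from $R_l$, with $\sigma \cdot L_p = L_{\sigma p}$. Unpacking the composition $v \circ u = [\O_\Delta] \in K_0(X^* \times X^*)$ using the formula $\pi_*(q^*v \otimes_A p^*u)$ and base-changing to $l$, I obtain the Koszul-type decomposition of the diagonal
\[
[\O_{\Delta_l}] = \sum_{p \in P} L_p \boxtimes R_p \in K_0(X^*_l \times X^*_l).
\]
Applying $(\pi_1)_*(\pi_2^*(-) \otimes -)$ to any $x \in K_0(X^*_l)$ and using the projection formula together with $(\pi_1)_* \pi_2^*(y) = \chi(y) \cdot [\O_{X^*_l}]$ produces the expansion $x = \sum_p \chi(R_p \otimes x) \cdot L_p$, so $\{L_p\}$ spans $K_0(X^*_l)$; since $|P|$ equals the rank of the free $\Z$-module $K_0(X^*_l)$ (Theorem \ref{K0} applied in light of the iso $u$), $\{L_p\}$ is a $\Z$-basis. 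Combined with the Galois equivariance, this is the desired permutation $G$-basis of line bundles.

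The main obstacle is the verification in $(\Leftarrow)$ that the line bundle $Q$ constructed by descent indeed represents the specific isomorphism of Proposition \ref{toric model}, i.e., that $Q$ corresponds to $u = \bigoplus_O \pi_{O*}(L_O)$ under $\Hom_\C(X^*, A) \cong K_0(X^*_A)$. This requires care with the composition law in $\C$ and with the relationship between $X^*_A$ (where $Q$ lives) and the pushforwards $\pi_{O*}(L_O)$ on $X^*$: concretely, one must check that restricting $Q$ to each factor $X^*_{k_O}$ of $X^*_A$ and pushing forward along $X^*_{k_O} \to X^*$ recovers $\pi_{O*}(L_O)$ after base change back to $l$.
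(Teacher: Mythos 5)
There is a genuine gap in your $(\Leftarrow)$ direction. You write that the tuple $(L_p)_p$ with $\sigma\cdot L_p = L_{\sigma p}$ ``satisfies the descent condition by the permutation hypothesis, so glues to a line bundle $Q\in\Pic(X^*_A)$.'' But having $\Gal(l/k_i)$-stable isomorphism classes in $\Pic(X^*_l)$ does not automatically yield a line bundle on $X^*_{k_i}$: the natural map $\Pic(X^*_{k_i})\to\Pic(X^*_l)^{\Gal(l/k_i)}$ is injective but its cokernel sits inside $\Br(k_i)$ (via the Leray/Hochschild--Serre sequence), and the obstruction need not vanish for a general smooth projective variety. The reason it vanishes here is precisely that $X^*$ is the \emph{toric model}, hence has a $k$-rational point, which forces $\Br(k_i)\to\Br(X^*_{k_i})$ to be injective and kills the obstruction. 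The paper's proof invokes this explicitly (via \cite[Proposition 5.1]{ctkarm}) to conclude $S_i\cong p_i^*(Q_i)$. Your proposal never uses the rational point of $X^*$, so this step is unjustified and the construction of $Q$ is not established.

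A second, smaller issue: the ``main obstacle'' you flag at the end---matching the descended $Q$ with the specific isomorphism of Proposition \ref{toric model}---is a detour that the paper avoids entirely. Rather than take an isomorphism from Proposition \ref{toric model} and try to identify it with the one coming from $Q$, the paper simply defines $u$ to be the morphism represented by $\coprod_i Q_i\in\Pic(X^*_A)$ (so there is nothing to ``match''), checks that $K_0(u_l)$ is an isomorphism (it sends the canonical basis of $K_0(A_l)$ to the given permutation basis), and then invokes the subsequent lemma (which rests on the splitting principle from \cite{mp}) to upgrade $K_0(u)$ being an isomorphism to $u$ being an isomorphism in $\C$. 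You should restructure $(\Leftarrow)$ along these lines.

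Your $(\Rightarrow)$ direction is a genuinely different route from the paper's and, modulo some hand-waving, it works. The paper's argument is more economical: viewing $u^{\op}$ as a morphism $A\to X^*$ and applying the functor $K_0$ over $k^s$ gives an isomorphism $K_0(A_{k^s})\cong K_0(X^*_{k^s})$ sending the canonical basis of $\Z^n$ to the Galois conjugates of the $Q_i$, which are line bundles; no diagonal decomposition or projection-formula calculation is needed, and the rank count is automatic since $K_0(A_{k^s})\cong\Z^n$. Your Koszul-style expansion $[\O_{\Delta_l}]=\sum_p L_p\boxtimes R_p$ and the conclusion that $\{L_p\}$ spans is correct in spirit, but it re-derives what the functoriality of $K_0$ gives for free; moreover your assertion that $|P|$ equals the rank ``by Theorem \ref{K0}'' is misplaced---it follows from $K_0(u_l)$ being an isomorphism of free $\Z$-modules, not from the generators-and-relations presentation. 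Also note that your claim that $A$ ``splits over $l$'' by appeal to Proposition \ref{toric model} is not justified (two \'etale algebras can be related in $\C$ without being isomorphic as algebras); the paper sidesteps this by working over $k^s$ and only at the end observing that $\Gal(k^s/l)$ acts trivially on $K_0(X^*_{k^s})$, so the basis descends to $X^*_l$.
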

\begin{proof}
$\Rightarrow$: Decompose $A$ as $\prod_{i=1}^t k_i$ where $k_i$ are finite separable field extensions of $k$. We have $X^*_A=\coprod_{i=1}^t X^*_{k_i}$ the disjoint union of $X^*_{k_i}$ and $Q=\coprod_{i=1}^t Q_i$ where $Q_i$ are line bundles on $X^*_{k_i}$. Let $q_i: X^*_{k_i}\to X^*$ be the projections. Then $u=\bigoplus_{i=1}^t q_{i*}Q_i$. Let $p_i: X^*_{k^s}\to X^*_{k_i}$ be the projections and $G_i=\Gal(k_i/k)$. Then 
\[u_{k_s}=\bigoplus_{i=1}^t p_i^*q_i^*q_{i*}(Q_i)=\bigoplus_{i=1}^t\bigoplus_{g\in G_i} p_i^*(gQ_i)\] 
and $A_{k^s}\cong (k^s)^n$ where $n=\sum _{i=1}^t |G_i|$. View $u$ as $u^{\op}: A^{\op}=A\to X^*$. Then the map $u_{k^s}^{\op}$ induces an isomorphism $K_0((k^s)^n)\to K_0(X^*_{k^s})$ where the canonical basis of the former  is sent to $\{p_i^*(gQ_i)\,|\, g\in G_i,1\leqslant i\leqslant t\}$ and this set gives a permutation $\Gamma$-basis of $K_0(X^*_{k^s})$ consisting of line bundles. As $\Gal(k^s/l)$ acts trivially on $K_0(X^*_{k^s})$, this basis descends to $X^*_l$.

$\Leftarrow$: Assume $P$ is a permutation $G$-basis of $K_0(X^*_l)$ consisting of line bundles on $X^*_l$ and $P$ divides into $t$ $G$-orbits. Let $\{S_i\}_{i=1}^t$ be the set of representatives of $G$-orbits, and let $\Gal(l/k_i)$ be the stabilizer of $S_i$. Set $A=\Hom_G(P,l)$. Then $A\cong\prod_{i=1}^t k_i$. Since $X^*$ has a rational point, by \cite[Proposition 5.1]{ctkarm}, we have $S_i \in \Pic(X^*_l)^{\Gal(l/k_i)} \cong \Pic(X^*_{k_i})$, namely $S_i\cong p_i^*(Q_i)$ for some $Q_i\in\Pic(X^*_{k_i})$ where $p_i: X^*_l\to X^*_{k_i}$ are the projections. There is a morphism $u: X^*\to A$ which is represented by $\coprod_{i=1}^t Q_i\in\Pic(X^*_A)$, and by construction, the map $u_l$ induces an isomorphism $K_0(X^*_l)\cong K_0(A_l)$. Using the following lemma, we have $u$ is an isomorphism.
\end{proof}
\begin{lem}
Let $X^*$ be the same as before and $A$ an \'{e}tale $k$-algebra. If $u:X^*\to A$ is a morphism in $\C$ such that $K_0(u_{k^s}): K_0(X^*_{k^s})\to K_0(A_{k^s})$ is an isomorphism, then so is $u$.
\end{lem}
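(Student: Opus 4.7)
The plan is to reduce to an invertibility check for endomorphisms of $A$ in $\C$ that can be verified on $K_0(\cdot_{k^s})$, after first producing some isomorphism $X^*\cong A$ in $\C$ via Proposition \ref{toric model}.

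First I would observe that since $A$ is étale, $K_0(A_{k^s})$ is a permutation $\Gamma$-module whose canonical basis is the set of primitive idempotents of $A_{k^s}\cong (k^s)^n$, naturally identified with the $\Gamma$-set $\Hom_k(A, k^s)$. The hypothesis that $K_0(u_{k^s})$ is a $\Gamma$-module isomorphism then forces $K_0(X^*_{k^s})$ to be a permutation $\Gamma$-module as well, with basis $P:=K_0(u_{k^s})^{-1}(\text{canonical basis})$. Since $\Gal(k^s/l)$ acts trivially on $K_0(X^*_{k^s})$, the basis $P$ descends to a permutation $G$-basis of $K_0(X^*_l)$, and via $K_0(u_l)$ the $G$-set $P$ is isomorphic to $\Hom_k(A,l)$. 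Applying Proposition \ref{toric model} to $P$ produces an isomorphism $\psi: X^*\to \Hom_G(P,l)$ in $\C$, and since $\Hom_G(P,l)\cong A$ as $k$-algebras by Galois descent for étale algebras, I view $\psi$ as an isomorphism $\psi: X^*\to A$.

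Next I would reduce to showing that $f := u\circ\psi^{-1}: A\to A$ is an isomorphism in $\C$, since then $u=f\circ\psi$ is automatically one. The induced map $K_0(f_{k^s})=K_0(u_{k^s})\circ K_0(\psi_{k^s})^{-1}$ is a $\Gamma$-automorphism of $K_0(A_{k^s})=\Z^n$, so the task becomes the sublemma: for an étale $k$-algebra $A$, an element $f\in\End_{\C}(A)=K_0(A^{\op}\otimes_k A)$ is invertible in $\C$ if and only if $K_0(f_{k^s})$ is an automorphism. To prove this sublemma, I would decompose $A_{k^s}\otimes_{k^s} A_{k^s}\cong (k^s)^{n^2}$ and compute the composition law $v\circ u=\pi_*(q^*v\otimes_A p^*u)$ explicitly on the resulting bimodule pieces: the identification $\End_{\C_{k^s}}(A_{k^s})\cong M_n(\Z)$ turns composition into matrix multiplication and the tautological action on $K_0(A_{k^s})=\Z^n$ into matrix action on column vectors. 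The $\Gamma$-action on $M_n(\Z)$ is by simultaneous permutation of rows and columns via its action on $\Hom_k(A,k^s)$, and Galois descent identifies $\End_\C(A)$ with $M_n(\Z)^\Gamma$. A $\Gamma$-equivariant invertible integer matrix has a $\Gamma$-equivariant inverse, producing the sought inverse of $f$ in $\C$.

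The hard part I expect is the sublemma of the previous paragraph, in particular the careful verification that the composition formula in $\C$ indeed unwinds to matrix multiplication and that the Galois descent identification $\End_\C(A)\cong\End_{\C_{k^s}}(A_{k^s})^\Gamma$ holds as rings. Once these calculations are in place the rest is bookkeeping around the permutation basis produced by Proposition \ref{toric model}.
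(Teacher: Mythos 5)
Your proposal is correct but takes a genuinely different route from the paper. The paper's proof first uses the commutative square
\[
\begin{tikzcd}[column sep=large]
K_0(X^*) \arrow{r}{K_0(u)} \arrow{d} & K_0(A) \arrow{d}\\
K_0(X^*_{k^s})^{\Gamma} \arrow{r}{K_0(u_{k^s})} & K_0(A_{k^s})^{\Gamma}
\end{tikzcd}
\]
(with the vertical arrows isomorphisms by \cite[Corollary 5.8]{mp} and \'etaleness of $A$) to deduce that $K_0(u)$ is an isomorphism, and then invokes the \emph{splitting principle} \cite[Proposition 6.1]{mp} to show $K_0^{X^*}(w)$ is surjective for $w=u^{\op}$; this produces a section $v$ of $w$ in $\C$, after which a short $K_0$-computation upgrades $v$ to a two-sided inverse. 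You instead avoid the splitting principle entirely: you use the hypothesis to equip $K_0(X^*_l)$ with a permutation $G$-basis $P$, apply Proposition \ref{toric model} to get a \emph{reference} isomorphism $\psi\colon X^*\to\Hom_G(P,l)\cong A$, and reduce to showing that the self-map $f=u\circ\psi^{-1}$ of $A$ is invertible, which you do by identifying $\End_\C(A)\cong M_n(\Z)^\Gamma$ as rings and noting that a $\Gamma$-invariant matrix in $\GL(n,\Z)$ has a $\Gamma$-invariant inverse. This is a sound strategy, and the identification of $P$ with $\Hom_k(A,l)$ as $G$-sets (via the $\Gamma$-equivariance of $K_0(u_{k^s})$) does produce $\Hom_G(P,l)\cong A$ as you claim. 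The trade-off is roughly this: the paper's route leans on machinery from \cite{mp} (the splitting principle for toric models, whose hypotheses the setup is already tailored to) and is short once that is available, while your route is more self-contained in spirit but transfers the work into the ring-theoretic sublemma, where one must genuinely verify that the composition $\pi_*(q^*v\otimes_A p^*u)$ for $\Spec$-level objects unwinds to matrix multiplication on $K_0((A\otimes_k A)_{k^s})\cong M_n(\Z)$ and that the Galois-descent isomorphism $\End_\C(A)\cong M_n(\Z)^\Gamma$ is one of rings, not just abelian groups. You correctly flag this as the hard part; it is true (and essentially folklore), but it is the step that would need to be written out carefully for a complete proof. As a side remark, your $\End_\C(A)\cong\End_\Gamma(K_0(A_{k^s}))$ isomorphism is also, implicitly, what justifies the paper's final inference that $K_0(v\circ w)=1_{K_0(A)}$ (together with idempotency of $v\circ w$) forces $v\circ w=1_A$, so the two arguments converge on the same underlying structural fact about $\End_\C$ of \'etale algebras.
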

\begin{proof}
There is a commutative diagram:
\begin{center}
\begin{tikzcd}[column sep=large]
K_0(X^*) \arrow{r}{K_0(u)} \arrow{d}
& K_0(A) \arrow{d}\\
K_0(X^*_{k^s})^{\Gamma} \arrow{r}{K_0(u_{k^s})} & K_0(A_{k^s})^{\Gamma}
\end{tikzcd}
\end{center}

The right vertical map is an isomorphism because $A$ is \'{e}tale and so is $K_0(u_{k^s})$ by assumption. The left vertical map is an isomorphism by \cite[Corollary 5.8]{mp}. Thus, $K_0(u)$ is also an isomorphism.

Write $w=u^{\op}: A\to X^*$. Then by the splitting principle (their Proposition $6.1$) and the proof, $K_0^{X^*}(w): K_0(X^*,A)\to K_0(X^*\times X^*)$ is surjective. Thus, there exists $v\in K_0(X^*,A): X^*\to A$ such that $w\circ v=K_0^{X^*}(w)(v)=1_{X^*}$, and thus $K_0(w\circ v)=K_0(w)K_0(v)=1_{K_0(X^*)}$. Since $K_0(w)=\phi$ is an isomorphism, we have $K_0(v)=\phi^{-1}$ and $K_0(v\circ w)=K_0(v)K_0(w)=1_{K_0(A)}$. This implies $v\circ w=1_A$ and thus $v$ is a two sided inverse of $w$ in $\C$.
\end{proof}
The proof of $(3)\Leftrightarrow(4)$ in their Proposition 7.9 shows that the $T_l$-isomorphism $f: X_l\to X^*_l$ induces a $G=\Gal(l/k)$-module isomorphism $f^*: K_0(X^*_l)\to K_0(X_l)$. Thus, $K_0(X^*_l)$ has a permutation $G$-basis of line bundles on $X^*_l$ if and only if $K_0(X_l)$ has such a basis. Note that the proof $(1)\Rightarrow(2)$ (an isomorphism $u: X^*\to A$ gives an isomorphism $u': X\to B$), which uses the construction (\ref{endo}) recalled at the beginning of the section, works only when $u$ is represented by an element $Q\in\Pic(X^*_A)$. Thus, we have the following instead:
\begin{thm}\label{basis}
Let $X$ be a smooth projective toric $T$-variety over $k$ that splits over $l$ and $G=\Gal(l/k)$. Assume $K_0(X_l)$ has a permutation $G$-basis $P$ of line bundles on $X_l$. Let $\{P_i\}_{i=1}^t$ be $G$-orbits of $P$, and let $\pi: X_l\to X$ be the projection. For any $S_i\in P_i$, set $B_i=\End_{\O_X}(\pi_*(S_i))$ and $B=\prod_{i=1}^t B_i$. Then the map $u=\bigoplus_{i=1}^t \pi_*(S_i): X\to B$ gives an isomorphism in the motivic category $\C$.
\end{thm}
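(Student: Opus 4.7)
The plan is to reduce Theorem \ref{basis} to the corresponding statement on the associated toric model $X^*$, where Lemma \ref{lb} already gives an isomorphism with an \'{e}tale algebra, and then transfer the result to $X$ via the construction (\ref{endo}) recalled at the start of this section. By the proof of \cite[Proposition 7.9]{mp}, the map $f^*:K_0(X^*_l)\to K_0(X_l)$ is a $G$-module isomorphism, and since $f$ is an isomorphism of varieties it carries line bundles to line bundles. Setting $S_i':=(f^*)^{-1}(S_i)$, we obtain a permutation $G$-basis $P'$ of $K_0(X^*_l)$ by line bundles, with orbit representatives $S_i'$ having the same stabilizers $H_i:=\Gal(l/k_i)$. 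Applying Lemma \ref{lb} to $(X^*,P')$ produces an isomorphism $v:X^*\to A$ in $\C$ with $A=\prod_{i=1}^t k_i$ \'{e}tale, represented by $Q\in\Pic(X^*_A)$ whose $k_i$-component $Q_i$ satisfies $p_i^*Q_i=S_i'$.

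Feeding $(v,Q)$ into the construction (\ref{endo}), set $P'':=f_A^*(\pi_{X^*_A}^*Q)\in\Pic(X_{A\otimes_k l})$ and $B':=\End_{X_A}(\pi_{X_A*}P'')$; by \cite[Theorem 7.6]{mp}, the morphism $u':=\phi_*(P''):X\to B'$ is a split monomorphism in $\C$. To identify $(u',B')$ with $(u,B)$, decompose $A\otimes_k l\cong\prod_i\prod_{G/H_i}l$ and $X_{A\otimes_k l}\cong\coprod_i\coprod_{G/H_i}X_l$ accordingly. On the $(i,\sigma H_i)$-component, $\pi_{X^*_A}^*Q$ restricts to $\sigma^*S_i'$, and the equivariance $f^*\sigma^*=\sigma^*f^*$ yields $P''|_{(i,\sigma H_i)}\cong\sigma^*S_i$. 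Since $\pi\circ\sigma=\pi$ as $k$-morphisms $X_l\to X$, the pushforward by $\phi$ equals $\pi_*S_i$ on each $\sigma H_i$-summand, and absorbing the resulting $[k_i:k]$-fold multiplicity into the $B_i$-module structure from $B_i=\End_{\O_X}(\pi_*S_i)$ collapses everything into a single summand per orbit, so $u'=\bigoplus_i\pi_*(S_i)=u$ and $B'=B$.

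To upgrade $u$ from a split monomorphism to an isomorphism, use that $v$ is itself an isomorphism in $\C$ represented by the invertible line bundle $Q$: its inverse $v^{-1}:A\to X^*$ is represented by $Q^{\vee}$, and feeding $(v^{-1},Q^{\vee})$ through the construction (\ref{endo}) produces a morphism $w:B\to X$ in $\C$. Both $w\circ u$ and $u\circ w$ reduce, after base change to $l$, to the identities on $K_0(X_l)$ and $K_0(B_l)$ respectively (since $K_0(v_l)$ is an isomorphism and $f^*$ is $G$-equivariant), and the splitting-principle argument from the proof of the lemma following Lemma \ref{lb} lifts these $K_0$-level identities to identities in $\C$, making $w$ a two-sided inverse of $u$. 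The main obstacle is the componentwise identification in the second paragraph---tracking the Galois-twisted decomposition of $A\otimes_k l$, verifying $G$-equivariance of $f^*$ at the sheaf level (not just on $K_0$), and accounting for how the $[k_i:k]$-fold multiplicity is absorbed by the $B_i$-module structure; the inversion step is comparatively easier but relies on functoriality of the construction (\ref{endo}) in its line-bundle input.
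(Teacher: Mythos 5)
Your plan to reduce to $X^*$, apply Lemma \ref{lb}, and push the result through construction (\ref{endo}) follows the paper's outline, and the componentwise identification of $B'$ with $B$ — while less careful than the paper's treatment (which computes $\pi_{X*}f_i^*\pi_{X^*}^*(Q_i)\cong p_*(S_i)\otimes_k k_i$, passes to endomorphism algebras, and then proves $B_i\cong B_i'$ via a chain of $G$-equivariant isomorphisms using Lemma \ref{vanish}) — captures the right idea. However, the upgrade step contains a genuine error: the claim that the inverse $v^{-1}\colon A\to X^*$ of a morphism represented by a line bundle $Q$ is itself represented by the dual $Q^\vee$ is false in $\C$. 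Already for $\P^1\cong k\times k$, the morphism represented by $(\O,\O(-1))$ has inverse represented by $(\O,-\O(-1))$, a virtual class in $K_0$, not by $(\O,\O(1))$; in general the inverse of a line-bundle-represented $K$-correspondence is a ``dual basis'' with respect to the Euler pairing and typically involves alternating sums, not duals of line bundles. Feeding $(v^{-1},Q^\vee)$ into (\ref{endo}) therefore does not produce an inverse $w$ of $u'$, and this step collapses.

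There is a second, smaller gap: the ``splitting-principle argument'' you invoke (the lemma immediately after Lemma \ref{lb}) is stated and proved there only for a toric model $X^*$ and an \emph{\'etale} target algebra $A$; it does not automatically apply to a general $X$ and a product of central simple algebras $B$. The paper sidesteps both problems by citing the implication $(1)\Rightarrow(2)$ of \cite[Proposition 7.9]{mp}, which already asserts that when $u\colon X^*\to A$ is an isomorphism represented by an element of $\Pic(X^*_A)$, the construction (\ref{endo}) yields an \emph{isomorphism} $u'\colon X\to B$. With that input the entire remaining work is to identify the resulting algebra and morphism with the $B$ and $u$ in the statement — which is exactly what the paper's proof does, and which you also attempt, but the isomorphism itself is taken from \cite{mp} rather than re-derived. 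Either cite that result, or replace your inverse construction with an argument that does not rely on $Q^\vee$ representing the inverse.
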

\begin{proof}
By Lemma \ref{lb}, we have an isomorphism $u: X^*\to A$ represented by $Q\in\Pic(X^*_A)$. Here $A\cong\prod_{i=1}^t k_i$ where $\Gal(l/k_i)$ are the stabilizers of $S_i$ under the $G$-action. Then $Q$ is the disjoint union $\coprod_{i=1}^t Q_i$ where the $Q_i\in\Pic(X^*_{k_i})$ descend from $(f^*)^{-1}(S_i)\in \Pic(X^*_l)^{\Gal(l/k_i)}$. Now we run the construction (\ref{endo}) for $Q_i$:
\begin{center}
\begin{tikzcd}
X_{k_i\otimes_k l} \arrow{r}{f_i} \arrow{d}{\pi_X} & X^*_{k_i\otimes_k l} \arrow{d}{\pi_{X^*}}\\
X_{k_i}  & X^*_{k_i}
\end{tikzcd}
\end{center}

Let $p: X_{l}\to X_{k_i}$ and $q: X_{k_i}\to X$ be the projections. Then $\pi_{X*}f_i^*\pi_{X^*}^*(Q_i)\cong p_*(S_i)\otimes_k k_i$ where its $\O_{X_{k_i}}$-module structure comes from the one on $p_*(S_i)$. Thus, $\End_{\O_{X_{k_i}}}(\pi_{X*}f_i^*\pi_{X^*}^*(Q_i))\cong\End_{\O_{X_{k_i}}}(p_*(S_i))\otimes_k \End_k(k_i)$ is Brauer equivalent to $B_i'=\End_{\O_{X_{k_i}}}(p_*S_i)$. It remains to prove that $B_i\cong B_i'$. There is a $G$-isomorphism:
$B_i\otimes_k l \cong\End_{\O_{X_l}}(\pi^*\pi_*(S_i))\cong\End_{\O_{X_l}}(p^*q^*q_*p_*(S_i))\cong \End_{\O_{X_l}}(p^*p_*(S_i)\otimes_k k_i)\cong\End_{\O_{X_l}}(p^*p_*(S_i))\otimes_k k_i\cong (B_i'\otimes_{k_i}l)\otimes_k k_i\cong B_i'\otimes_k l$.
The fourth isomorphism follows from Lemma \ref{vanish}. Take $G$-invariants on both sides, we have $B_i\cong B_i'$.
\end{proof}
\begin{lem}\label{vanish}
Let $X$ be a proper variety over $k$ and assume that there is a finite group $G$ acting on Cartier divisors $\CDiv(X)$. Let $D\in \CDiv(X)$ and $g\in G$ such that $D$ and $gD$ are not linearly equivalent. Then $\Hom_{\O_X}(\O_X(D),\O_X(gD)) = 0$. 
\end{lem}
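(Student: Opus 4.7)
The plan is to rewrite the $\Hom$ space as global sections and then exploit the finite order of $g$. The identification
\[\Hom_{\O_X}(\O_X(D),\O_X(gD)) \cong H^0(X, \O_X(gD - D))\]
is standard, so it suffices to show that if the right-hand side is nonzero then $gD \sim D$, contradicting the hypothesis.

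First, I would observe that a nonzero global section of $\O_X(gD - D)$ corresponds to an effective Cartier divisor $E$ with $E \sim gD - D$; equivalently, $gD \sim D + E$ with $E \geq 0$. In the paper's application the $G$-action on $X$ comes from $k$-automorphisms of $X_l$, so it preserves both linear equivalence and the effective cone; one should record this as an implicit hypothesis. Since $G$ is finite, $g$ has some finite order $n$. Applying $g^i$ to $gD \sim D + E$ gives $g^{i+1}D \sim g^i D + g^i E$ with $g^i E \geq 0$. Telescoping over $i = 0,\dots,n-1$ yields
\[g^n D \;\sim\; D + \sum_{i=0}^{n-1} g^i E,\]
and since $g^n = \id$, the effective divisor $\sum_{i=0}^{n-1} g^i E$ is linearly equivalent to zero.

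To close the argument I invoke the standard fact that on a proper integral variety an effective Cartier divisor linearly equivalent to zero must itself be zero. Indeed, if $F \geq 0$ and $F = \mathrm{div}(f)$ for some $f \in K(X)^{\times}$, then $f$ has no poles and therefore lies in $H^0(X,\O_X)$, which is a field because $X$ is proper and integral; hence $f$ is a unit and $F = 0$. Applied to the sum $\sum_{i=0}^{n-1} g^i E$, and using that a sum of effective divisors vanishes if and only if each summand does, this forces $E = 0$. Thus $gD \sim D$, contradicting the assumption.

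I do not anticipate a genuine obstacle: the only delicate point is making sure that "$g^i E \geq 0$" is justified, which is automatic once one assumes (as is the case in Theorem \ref{basis}) that $G$ acts through divisor pullbacks along automorphisms of $X$. The core of the proof is the telescoping trick combined with properness of $X$.
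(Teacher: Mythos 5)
Your proof is correct and uses essentially the same idea as the paper's: exploit the finite order of $g$ to telescope and then use properness (so that $\Gamma(X,\O_X)$ is a field) to force triviality. The paper phrases this in terms of sections --- tensoring the $G$-translates of $s$ to produce a section $t$ of $\O_X(D-gD)$ and observing that $s\circ t$ and $t\circ s$ are nonzero scalars, hence isomorphisms --- whereas you phrase the same telescoping in terms of effective divisors and invoke that an effective principal Cartier divisor on a proper integral variety is zero; the two formulations are interchangeable, and your remark about the $G$-action needing to preserve effectivity and linear equivalence applies equally to the paper's argument.
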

\begin{proof}
Assume that $\Hom_{\O_X}(\O_X(D),\O_X(gD))\neq 0$, which is equivalent to $\O_X(gD-D)$ having a nonzero global section $s$. Since $G$ is a finite group, $g^n=1$ for some $n$. Therefore, the invertible sheaf $\O_X(D-gD)=(g^{n-1}\otimes\cdots\otimes g\otimes 1)\O_X(gD-D)$ has a nonzero global section $t=g^{n-1}s\otimes\cdots\otimes s$. View $s,t$ as maps $s:\O_X(D)\to \O_X(gD)$ and $t:\O_X(gD)\to \O_X(D)$. Since $st,ts\in\Gamma(X,O_X)=k$ are nonzero, we have $\O(gD-D)\cong\O_X$, a contradiction.
\end{proof}
\begin{rem}
There is a more ``economical" description of the algebra isomorphic to $X$ in $\C$:

Write $S_i=\O(-D_i)$ where the $D_i$ are torus invariant. Let $\Gal(l/l_i)$ be the stabilizers of $D_i$ under the $G$-action and let $\pi_i:X_{l_i}\to X$ be the projections. Then divisors $D_i$ and thus invertible sheaves $S_i$ descend to $X_{l_i}$, and we use the same notation. Then $X\cong\prod_{i=1}^t \End_{\O_X}(\pi_{i*}(S_i))$. In effect, it replaces all $\M_n(k)$ in $B$ constructed in the theorem by $k$ which is an isomorphism in $\C$. 
\end{rem}
\begin{rem}
A question remains: If $K_0(X_l)$ is a permutation $G$-module, can we always find a permutation $G$-basis of line bundles?
\end{rem}
Recall that for $n\geqslant 0$, $K_n$ defines a functor $K_n:\C\to\textit{Ab}$. Hence we have
\begin{cor}
$K_n(X)\cong \prod_{i=1}^t K_n(B_i)$.
\end{cor}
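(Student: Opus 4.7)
The plan is to apply the functor $K_n: \C \to \textit{Ab}$ (recalled just before the statement) to the isomorphism $u = \bigoplus_{i=1}^t \pi_*(S_i) : X \to B$ produced by Theorem \ref{basis}. Since $K_n$ sends isomorphisms in $\C$ to isomorphisms of abelian groups, this immediately yields $K_n(X) \cong K_n(B) = K_n(\Spec k, B)$. What remains is to identify $K_n(\Spec k, B)$ with $\prod_{i=1}^t K_n(\Spec k, B_i)$.

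For that identification I would argue directly from the definition $K_n(\Spec k, B) = K_n^Q(\mathcal{P}(\Spec k, B)) = K_n(\mathrm{Rep}(B))$. Because $B = \prod_{i=1}^t B_i$ is a finite product of (central simple) $k_i$-algebras, the central idempotents $e_i \in B$ corresponding to the factors are orthogonal, sum to $1$, and are central. Any left $B$-module $M$ therefore splits canonically as $M = \bigoplus_{i=1}^t e_i M$ with $e_i M$ a $B_i$-module, and this decomposition is functorial and exact. Consequently one obtains an equivalence of exact categories
\begin{equation*}
\mathcal{P}(\Spec k, B) \simeq \prod_{i=1}^t \mathcal{P}(\Spec k, B_i),
\end{equation*}
and Quillen's K-theory of a finite product of exact categories is the product of the K-theories. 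This yields $K_n(B) \cong \prod_{i=1}^t K_n(B_i)$, and combining with the first step completes the proof.

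There is essentially no obstacle here: the entire content of the corollary is packaged in Theorem \ref{basis} (giving the isomorphism in $\C$) together with the elementary behaviour of $K_n$ on products of separable algebras via orthogonal idempotents. The only point that deserves a line of explanation is the product decomposition of $K_n(B)$, since on the motivic side one might worry about cross terms $K_0(B_i^{\op} \otimes_k B_j)$ for $i \neq j$; but these play no role once one works out what $K_n$ of the target actually is, precisely because $K_n$ is evaluated on the object $B$ itself rather than on morphisms between the factors.
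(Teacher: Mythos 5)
Your proposal is correct and matches the paper's intent: the paper states the corollary immediately after recalling that $K_n$ is a functor on $\C$ ("Hence we have\dots"), so the intended proof is exactly to apply $K_n$ to the isomorphism $u:X\to B$ from Theorem~\ref{basis}. Your elaboration of the routine step $K_n(B)\cong\prod_i K_n(B_i)$ via orthogonal central idempotents and Quillen $K$-theory of a product of exact categories is a correct filling-in of what the paper leaves implicit.
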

\section{Separable Algebras for Toric Surfaces}\label{sa-surface}
\subsection{Separable algebras for minimal toric surfaces}
Recall the families of minimal toric surfaces described in Theorem \ref{K0-surface}: Let $X$ be a minimal smooth projective toric $T$-surface over $k$ that splits over $l$, and let $X^*$ be its associated toric model. Let $\pi: X_l\to X$ be the projection. All isomorphisms below are taken in the motivic category $\C$.
\begin{description}
\item[\hyperlink{i'}{(i)}\hypertarget{i''}{}] If $X_l\cong F_a, a\geqslant 2$, then $X^*\cong k^4$ and $X\cong k\times Q\times k\times Q$, where $Q\cong\End_{\O_X}(\pi_*J_1)$ is a quaternion $k$-algebra.
\item[\hyperlink{ii'}{(ii)}\hypertarget{ii''}{}] More generally, let $X=\text{SB}(A)$ be a Severi-Brauer variety of dimension $n$ and $J=\O_{X_l}(-1)$. Then $X^*\cong k^{n+1}$ and $X\cong k\times\prod_{i=1}^n A^{\otimes i}$ where $A^{\otimes i}\cong\End_{\O_X}(\pi_*J^i)$, see Example \ref{sb}.
\item[\hyperlink{iii'}{(iii)}\hypertarget{iii''}{}] If $X_l\cong\P^1\times\P^1$, then $X^*\cong k\times K\times k$ where $K$ is a quadratic \'{e}tale algebra and the discriminant extension of $X$, and $X\cong k\times B\times A$, where $B\cong\End_{\O_X}(\pi_*J_1)$ is an Azumaya $K$-algebra of rank $4$ and $A\cong\End_{\O_X}(\pi_*(J_1J_2))$ is a central simple $k$-algebra of degree $4$, see Example \ref{inv}.
\item[\hyperlink{iv'}{(iv)}\hypertarget{iv''}{}] See Example \ref{dp} where $X^*\cong k\times K\times L$ and $P\cong\End_{\O_X}(\pi_*R_1)$ and $Q\cong\End_{\O_X}(\pi_*Q_1)$.
\end{description}

Now let $X$ be a smooth projective toric $T$-variety over $k$ that splits over $l$ and $G=\Gal(l/k)$. Recall that $X$ is uniquely determined by the associated toric model $X^*$, which corresponds to $\rho: \Gamma\to\GL(n,\Z)$, the fan $\Sigma$ such that $\rho(\Gamma)\subseteq\Aut_{\Sigma}$, and a principal homogeneous space $U\in H^1(k,T)$. Every variety within a family above has the same fan. Let $\rho': G\hookrightarrow\Aut_{\Sigma}(X_l)$ be the inclusion induced by $\rho$. We want to see how the separable algebras described above relate to $\rho'$ and $U$.

Let $\dim X=n$ and let $N$ be the number of rays in the fan $\Sigma$. Then the Picard rank of $X_l$ is $m=N-n$. Write $M$ for the group of characters of $T_l$ and $\CDiv_{T_l}$ for $T_l$-invariant Cartier divisors. There is a natural action of $\Aut_{\Sigma}(X_l)$ on $M$ and $\CDiv_{T_l}(X_l)$, and an induced action on $\Pic(X_l)$ via the canonical morphism $\CDiv_{T_l}(X_l)\to\Pic(X_l)$, $D\mapsto \O_{X_l}(D)$.

We have a short exact sequence of $\Aut_{\Sigma}(X_l)$-modules and therefore of $G$-modules via $\rho'$:
\begin{equation}\label{toricex}
0\to M\to \CDiv_{T_l}(X_l)\to \Pic(X_l)\to 0,
\end{equation}
or simply $0\to\Z^n\to\Z^N\to\Z^m\to 0.$
It corresponds to the short exact sequence of tori over $l$:
\begin{equation*}
1\to \G_{m,l}^m\to \G_{m,l}^N\to \G_{m,l}^n\to 1
\end{equation*}
and the sequence descends to
\begin{equation}\label{tori}
1\to S\to V\to T\to 1.
\end{equation}

Let $i: \Aut_{\Sigma}(X_l)\hookrightarrow S_N$ where $S_N$ is the group of permutations of the canonical $\Z$-basis of the lattice $\Z^N$ and it induces $i_*: H^1(G,\Aut_{\Sigma})\to H^1(G, S_N)$. Let $[\alpha]=i_*[\rho']$ and let $E$ be the corresponding \'{e}tale $k$-algebra of degree $N$. Then $V=\R_{E/k}(\G_{m,E})$. Let $j: \Aut_{\Sigma}(X_l)\to\GL(m,\Z)$ be the map induced by the action of $\Aut_{\Sigma}(X_l)$ on $\Pic(X_l)$ which induces $j_*: H^1(G, \Aut_{\Sigma})\to H^1(G, \GL(m,\Z))$. Let $[\beta]=j_*[\rho']$. Then $S$ is the torus corresponding to $[\beta]$.

The short exact sequence of tori over $k$ gives
\begin{equation*}
0\to H^1(G,T)\overset{\delta}{\to} H^2(G,S)\to\Br(E).
\end{equation*}
Here $H^1(G,V)=H^1(G,\R_{E/k}(\G_{m,E})(l))=\prod H^1(\Gal(E_t/k),E_t^{\times})=0$ by Hilbert $90$ Theorem where $E=\prod E_t$ and the $E_t$ are finite separable field extensions of $k$.

Let $S^*=\Hom(S_l, G_{m,l})$ be the group of characters over $l$. Then sequence (\ref{toricex}) can be rewritten as
\begin{equation*}
0\to T^*\to V^*\to S^*\to 0
\end{equation*}
which induces $H^0(G, S^*)\overset{\partial}{\to} H^1(G, T^*)$. Geometrically, $\partial$ is the map $\Pic(X^*)\to\Pic(T)$ which sends $Q\in\Pic(X^*)$ to its restriction $Q|_T$ on $T$. 

There is a $G$-equivariant bilinear map $S(l)\otimes S^*\to l^{\times}$ which sends $x\otimes \chi$ to $\chi(x)$, and it induces a pairing of Galois cohomology groups $\cup: H^2(G,S)\otimes H^0(G, S^*)\to \Br(k)$. Similarly, we have $\cup: H^1(G,T)\otimes H^1(G, T^*)\to \Br(k)$.
\begin{lem}\label{comm}
The following diagram is commutative:
\begin{center}
\begin{tikzcd}
H^1(G, T)\otimes H^0(G, S^*) \arrow{r}{1\otimes\partial} \arrow{d}{\delta\otimes 1}
& H^1(G,T)\otimes H^1(G, T^*) \arrow{d}{\cup}\\
H^2(G,S)\otimes H^0(G, S^*) \arrow{r}{\cup} & \Br(k)
\end{tikzcd}
\end{center}
\end{lem}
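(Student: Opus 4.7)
The plan is to reduce the commutativity statement to the standard cochain-level compatibility of cup products with connecting homomorphisms. The key structural observation is that the bilinear pairing $V(l)\otimes V^*\to l^\times$ given by evaluation unifies both pairings in the diagram: for $s\in S(l)$ and $\chi\in T^*$ regarded inside $V^*$ via pullback along $V\to T$, one has $\chi(s)=1$ since $\chi$ vanishes on the kernel $S$. Hence the pairing $V(l)\otimes V^*\to l^\times$ factors through $S(l)\otimes S^*\to l^\times$ when the first argument lies in $S(l)$ (by dropping $T^*\subseteq V^*$) and factors through $T(l)\otimes T^*\to l^\times$ when the second argument lies in $T^*$ (by dropping $S(l)\subseteq V(l)$). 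This is precisely the compatibility that makes the ``two ways of connecting'' agree.

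Next, I would pick cocycle representatives. Let $u\in H^1(G,T)$ be represented by a $1$-cocycle $a\colon G\to T(l)$ and $v\in H^0(G,S^*)$ by an invariant element $c\in S^*$. Using the surjectivity $V(l)\twoheadrightarrow T(l)$ (equivalently $H^1(G,V)=0$ by Hilbert 90), lift $a$ to a $1$-cochain $\tilde a\colon G\to V(l)$; similarly lift $c$ to an element $\tilde c\in V^*$. By construction $d\tilde a$ is a $2$-cocycle with values in the subgroup $S(l)\subseteq V(l)$ representing $\delta(u)\in H^2(G,S)$, and $d\tilde c$ is a $1$-cocycle with values in $T^*\subseteq V^*$ representing $\partial(v)\in H^1(G,T^*)$.

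Now apply the Leibniz rule for cup products at the cochain level with respect to the pairing $V(l)\otimes V^*\to l^\times$:
\begin{equation*}
d(\tilde a\cup\tilde c)=d\tilde a\cup\tilde c+(-1)^{1}\tilde a\cup d\tilde c=d\tilde a\cup\tilde c-\tilde a\cup d\tilde c.
\end{equation*}
By the compatibility of the first paragraph, $d\tilde a\cup\tilde c$ depends only on the class of $\tilde c$ in $S^*$, namely on $c$, and so equals the cochain representative of $\delta(u)\cup v$ under the pairing $S(l)\otimes S^*\to l^\times$. Symmetrically, $\tilde a\cup d\tilde c$ depends only on the class of $\tilde a$ in $T(l)$, namely on $a$, and so represents $u\cup\partial(v)$ under $T(l)\otimes T^*\to l^\times$. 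Thus the difference $\delta(u)\cup v-u\cup\partial(v)$ is the coboundary $d(\tilde a\cup\tilde c)$, and so vanishes in $H^2(G,l^\times)=\Br(l/k)\subseteq\Br(k)$. This is exactly the commutativity claimed.

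There is no serious obstacle; the only points requiring care are choosing the liftings in both variables consistently (so that $d\tilde a$ lands in $S$ and $d\tilde c$ in $T^*$) and tracking the Leibniz sign, which fortunately comes out as $+1$ because $\tilde c$ is a $0$-cochain. This is essentially the standard ``cup product commutes with connecting maps'' statement specialized to the pair of dual exact sequences $1\to S\to V\to T\to 1$ and $0\to T^*\to V^*\to S^*\to 0$.
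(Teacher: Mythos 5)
Your argument is correct and is, in substance, the same as the paper's: both choose cochain lifts $\tilde a$ of $a$ and $\tilde c$ of $\varphi$ into $V(l)$ and $V^*$ respectively, and show that the two cocycle representatives of $\delta(u)\cup v$ and $u\cup\partial(v)$ differ by the coboundary of $\theta=\tilde a\cup\tilde c$; the paper writes out $(d\theta)_{g,h}=\theta_{gh}^{-1}\theta_g\,{}^g\theta_h$ explicitly where you invoke the cochain-level Leibniz rule and the factoring of the $V\otimes V^*$ pairing through $S\otimes S^*$ and $T\otimes T^*$. One small imprecision: the Leibniz sign for $\deg\tilde a=1$ is $-1$ (as in your displayed formula), not $+1$ as your closing remark suggests, but that minus sign is exactly what produces the clean identity $\delta(u)\cup v=u\cup\partial(v)$ after rearranging, so the conclusion is unaffected.
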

\begin{proof}
Let $a\in H^1(G,T), \varphi\in H^0(G,S^*)$. For each $a_g\in T(l), g\in G$, pick $b_g\in V(l)$ that maps to $a_g$. Then $(\delta a)_{g,h}=b_{gh}^{-1}b_g{}^g b_h, g,h\in G$. Pick $\phi\in V^*$ that maps to $\varphi$. Then $(\partial \varphi)_g={\phi}^{-1}{}^g \phi$. Let $\alpha=a\cup(\partial\varphi)$ and $\beta=(\delta a)\cup \varphi$. Then $\alpha_{g,h}={}^g(\partial\varphi)_h(a_g)={}^g(\phi^{-1}{}^h\phi)(b_g)=({}^g\phi^{-1})(b_g)\cdot({}^{gh}\phi)(b_g)$ and $\beta_{g,h}=({}^{gh}\varphi)((\delta a)_{g,h})=({}^{gh}\phi)(b_{gh}^{-1})\cdot({}^{gh}\phi)(b_g)\cdot({}^{gh}\phi)({}^g b_h)$. Set $\theta_g=({}^g \phi)(b_g)$. Then $\beta_{g,h}=\theta_{gh}^{-1}\theta_g{}^g\theta_h \alpha_{g,h}$. Thus, $\alpha$ and $\beta$ give the same cycle class in $\Br(k)$.
\end{proof}
Let $P\in\Pic(X_l)$ be a line bundle on $X_l$ with stabilizer group $\Gal(l/\kappa)$ under the $G$-action. Since $P \in \Pic(X_l)^{\Gal(l/\kappa)}\cong (S^*)^{\Gal(l/\kappa)}$, the line bundle $P$ corresponds to a character $\chi: S_{\kappa} \to \G_{m,\kappa}$ over $\kappa$, or equivalently $\chi': S\to \R_{\kappa/k}(\G_{m,\kappa})$. Let $\pi: X_l\to X$ be the projection.
\begin{prop}\label{geom}
Let $\delta_P: H^1(G,T)\overset{\delta}{\to} H^2(G,S)\overset{\chi'}{\to} \Br(\kappa)$ be the composition map. Then $\delta_P[U]=[\End_{\O_X}(\pi_*P)]\in\Br(\kappa)$.
\end{prop}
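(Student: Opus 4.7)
My plan is to apply Lemma \ref{comm} to rewrite $\chi'_*\delta[U]$ as a cup product $[U]\cup\partial\chi$, and then to identify this cup product with an explicit 2-cocycle representing $[\End_{\O_X}(\pi_*P)]$ as the obstruction for $P$ to descend from $X_l$ to $X$. I would first pass from $k$ to $\kappa$. Shapiro's lemma gives $H^2(G,\R_{\kappa/k}(\G_m)(l))=H^2(H,l^\times)=\Br(\kappa)$ with $H=\Gal(l/\kappa)$, and under this identification $\chi'_*\delta[U]$ is the same as $\chi_*\delta_\kappa[U_\kappa]$ for the base change of $1\to S\to V\to T\to 1$ to $\kappa$. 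On the algebra side, factoring $\pi$ through $X_l\xrightarrow{\pi'}X_\kappa\xrightarrow{\pi_\kappa}X$ with $\pi_\kappa$ finite \'etale, the sheaf $\pi_*P=\pi_{\kappa*}\pi'_*P$ is naturally a module over $\O_X\otimes_k\kappa$, so $\End_{\O_X}(\pi_*P)$ is a central simple $\kappa$-algebra with Brauer class $[\End_{\O_{X_\kappa}}(\pi'_*P)]\in\Br(\kappa)$. So I may replace $(k,G,X,T,\pi)$ with $(\kappa,H,X_\kappa,T_\kappa,\pi')$ and assume $\kappa=k$; then $P\in\Pic(X_l)^G$ corresponds to $\chi\in(S^*)^G$, and Lemma \ref{comm} gives
\begin{equation*}
\chi_*\delta[U]=[U]\cup\partial\chi\in\Br(k)=H^2(G,l^\times).
\end{equation*}

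Next I would compute both sides of this identity as explicit 2-cocycles in $H^2(G,l^\times)$. Write $P=\O_{X_l}(D)$ for a $T_l$-invariant divisor $D$. Since $[P]$ is $G$-invariant, each ${}^gD-D$ is principal and, being supported away from the open orbit $U_l$, equals $\text{div}(f_g)$ for some $f_g\in\O(U_l)^\times$. By standard descent theory via the Hochschild--Serre spectral sequence for $\pi:X_l\to X$, the obstruction for $P$ to lift to $\Pic(X)$ is the class of the 2-cocycle $c_{g,h}:=f_{gh}/({}^gf_h\cdot f_g)\in l^\times$, and this class equals $[\End_{\O_X}(\pi_*P)]\in\Br(l/k)$. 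On the other hand, $\partial\chi\in H^1(G,T^*)$ is represented by $g\mapsto\eta_g:=[{}^gD-D]\in M=T^*$ via the inclusion $M\subset V^*=\CDiv_{T_l}(X_l)$, and $[U]\cup\partial\chi$ is represented by $(g,h)\mapsto({}^g\eta_h)(a_g)$, where $(a_g)$ is a cocycle representative of $[U]\in H^1(G,T)$.

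To match the two cocycles, I would fix a base point $p\in U(l)$ -- which exists since $T$ splits over $l$, so $U_l$ is a trivial torsor -- giving a trivialization $\phi:U_l\xrightarrow{\sim}T_l$, $u\mapsto up^{-1}$, with ${}^gp=a_gp$. Under $\phi$, each character $\eta_h$ pulls back to a function $\tilde\eta_h$ on $U_l$ whose divisor on $X_l$ is ${}^hD-D$, so $f_g=\lambda_g\tilde\eta_g$ for some $\lambda_g\in l^\times$. Since $\phi$ is not $G$-equivariant, a short calculation using $({}^gm)(t)=g(m({}^{g^{-1}}t))$ and ${}^{g^{-1}}a_g=a_{g^{-1}}^{-1}$ produces the twist identity
\begin{equation*}
{}^g\tilde\eta_h=({}^g\eta_h)(a_g)^{-1}\cdot\widetilde{{}^g\eta_h}.
\end{equation*}
Substituting into the formula for $c_{g,h}$ and using $\eta_{gh}=\eta_g+{}^g\eta_h$ in $M$ yields
\begin{equation*}
c_{g,h}=\frac{\lambda_{gh}}{{}^g\lambda_h\cdot\lambda_g}\cdot({}^g\eta_h)(a_g),
\end{equation*}
which is cohomologous to $({}^g\eta_h)(a_g)$ in $H^2(G,l^\times)$. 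This equals $[U]\cup\partial\chi=\chi_*\delta[U]$. The main obstacle is this final matching: correctly transferring characters of the split torus $T_l$ to functions on the $T_l$-torsor $U_l$ via a (non-equivariant) basepoint trivialization, keeping careful track of the twist $a_g$, and verifying that the resulting scalar is precisely the factor $({}^g\eta_h)(a_g)$ that bridges the descent obstruction cocycle and the cup product cocycle.
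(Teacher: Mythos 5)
Your proof is correct in outline and shares the paper's two structural ingredients (reducing to $\kappa=k$ and invoking Lemma~\ref{comm} to convert $\chi'_*\delta[U]$ into a cup product), but you diverge at the crucial geometric step. The paper, once at $\kappa=k$, writes $P\cong f^*\pi_{X^*}^*Q$ for $Q\in\Pic(X^*)$ and simply cites \cite[Lemma~7.3]{mp} for the identity $[U]\cup[Q|_T]=[\End_{\O_X}(\pi_*P)]$, after which Lemma~\ref{comm} closes the argument. You instead re-derive this identity from scratch: you represent $[\End_{\O_X}(\pi_*P)]$ by the Hochschild--Serre descent obstruction $2$-cocycle $c_{g,h}=f_{gh}/({}^gf_h\cdot f_g)$ coming from ${}^gD-D=\operatorname{div}(f_g)$, represent $[U]\cup\partial\chi$ by the standard cup-product cocycle $({}^g\eta_h)(a_g)$, and match the two by trivializing $U_l\cong T_l$ at a base point $p$ with ${}^gp=a_gp$ and tracking the resulting twist ${}^g\tilde\eta_h=({}^g\eta_h)(a_g)^{-1}\widetilde{{}^g\eta_h}$; the coboundary $\lambda_{gh}/({}^g\lambda_h\lambda_g)$ then drops out. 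This buys a self-contained proof that makes the role of the torsor cocycle $(a_g)$ transparent, at the cost of delicate cocycle bookkeeping (in particular one should nail down the sign/orientation conventions in the cup product and in the identification of the descent obstruction with $[\End_{\O_X}(\pi_*P)]$ rather than its opposite, and justify that the functions $\tilde\eta_h$ pulled back through the non-equivariant trivialization really have divisor ${}^hD-D$ on all of $X_l$). Your reduction to $\kappa=k$ via Shapiro's lemma together with $\pi_*P=\pi_{\kappa*}\pi'_*P$ is equivalent to the paper's restriction-diagram argument combined with the isomorphism $\End_{\O_{X_\kappa}}(\pi'_*P)\cong\End_{\O_X}(\pi_*P)$ from the proof of Theorem~\ref{basis}.
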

\begin{proof}
First we prove the case when $\kappa=k$. In this case, the line bundle $P\in\Pic(X_l)^G\cong\Pic(X^*)$. Thus, there is $Q\in\Pic(X^*)$ such that $P\cong f^*\pi_{X^*}^*Q$ where $\pi_{X^*}: X^*_l\to X^*$ is the projection and $f: X_l\to X^*_l$ is the $T_l$-isomorphism. \cite[Lemma 7.3]{mp} shows that $[U]\cup[Q|_T]=[\End_{\O_X}(\pi_*P)]\in\Br(k)$. On the other hand, $\delta_P([U])=\delta[U]\cup[\chi']=\delta[U]\cup[Q]$. By Lemma \ref{comm}, $\delta_P([U])=[U]\cup[\partial Q]=[U]\cup[Q|_T]$. 

In general, let $H=\Gal(l/\kappa)$ and consider the restriction map $\Res: H^1(G, T)\to H^1(H, T_{\kappa})$ which sends $[U]$ to $[U_{\kappa}]$. There is a commutative diagram:
\begin{center}
\begin{tikzcd}
H^1(G,T) \arrow{r}{\delta} \arrow{d}{\Res} & H^2(G, S) \arrow{r}{\chi'}\arrow{d}{\Res} & \Br(\kappa)\arrow[equal]{d}\\
H^1(H,T_{\kappa}) \arrow{r}{\delta} & H^2(H,S_{\kappa}) \arrow{r}{\chi}& \Br(\kappa)
\end{tikzcd}
\end{center}
Thus, $\delta_P[U]=[\End_{\O_{X_{\kappa}}}(\pi_{\kappa*}P)]$ where $\pi_{\kappa}: X_l\to X_{\kappa}$ is the projection. By the proof of Lemma \ref{lb}, $\End_{\O_{X_{\kappa}}}(\pi_{\kappa*}P)\cong \End_{\O_X}(\pi_*P)$.
\end{proof}
\begin{cor}\label{pic}
Let $X$ be a smooth projective toric variety over $k$ that splits over $l$ and $G=\Gal(l/k)$. Assume $\Pic(X_l)$ is a permutation $G$-module, i.e, the torus $S$ is quasi-trivial and thus has the form $\prod_{i=1}^t \R_{k_i/k}\G_{m,k_i}$ where $k_i$ are finite separable field extensions of $k$. Then the principal homogeneous space $U$ is uniquely determined by $(B_i\in\Br(k_i))_{1\leqslant i\leqslant t}$ where $B_i$ split over $E$. Let $\{S_i\}_{i=1}^t$ be the set of representatives for $G$-orbits of $\Pic(X_l)$. Then $B_i$ comes from $\End_{\O_X}(\pi_*S_i)$.
\end{cor}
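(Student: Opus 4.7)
The plan is to extract the classifying data for $U$ from the exact sequence
\[
0 \to H^1(G,T) \overset{\delta}{\to} H^2(G,S) \to \Br(E)
\]
established just before Lemma \ref{comm}, and then identify the resulting cohomology classes via Proposition \ref{geom}.

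First I would unpack the hypothesis that $S$ is quasi-trivial. By the permutation structure of $\Pic(X_l)$, a choice of representatives $\{S_i\}_{i=1}^t$ of the $G$-orbits, with stabilizers $\Gal(l/k_i)$, yields an isomorphism of $G$-modules $\Pic(X_l) \cong \bigoplus_{i=1}^t \Z[G/\Gal(l/k_i)]$. Dualizing gives precisely $S \cong \prod_{i=1}^t \R_{k_i/k}(\G_{m,k_i})$, and each representative $S_i$ corresponds under $S^* \cong \Pic(X_l)$ to a character $\chi_i' : S \to \R_{k_i/k}(\G_{m,k_i})$, namely the projection onto the $i$-th factor. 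Then Shapiro's lemma gives
\[
H^2(G,S) \;\cong\; \prod_{i=1}^t H^2(\Gal(l/k_i), l^\times) \;\cong\; \prod_{i=1}^t \Br(k_i,l),
\]
where the $i$-th projection is induced by $\chi_i'$.

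Next I would combine this with the injectivity of $\delta$ and the description of the kernel of $H^2(G,S) \to \Br(E)$. The image of $[U]$ in $H^2(G,S)$ is a tuple $(B_i)_{i=1}^t$ with $B_i \in \Br(k_i)$, and the compatibility with $\Br(E)$ in the exact sequence forces each $B_i$ to become trivial in $\Br(E)$, i.e. $B_i$ splits over $E$. Injectivity of $\delta$ says $U$ is recovered uniquely from $(B_i)_i$. To finish, I would identify the entries $B_i$ concretely by invoking Proposition \ref{geom} with $P = S_i$: the map $\delta_{S_i} = \chi_i' \circ \delta$ is precisely the $i$-th component of the Shapiro decomposition, and Proposition \ref{geom} evaluates it on $[U]$ as
\[
B_i \;=\; \delta_{S_i}[U] \;=\; [\End_{\O_X}(\pi_* S_i)] \;\in\; \Br(k_i).
\]

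The main technical point — and the place where care is needed — is the compatibility of the Shapiro isomorphism with the character $\chi_i'$ so that Proposition \ref{geom} plugs in cleanly; once that identification is made, the rest is a formal consequence of the exact sequence. The independence of $B_i$ from the chosen representative $S_i$ of its $G$-orbit follows since the different representatives are related by $G$-translation, under which $\End_{\O_X}(\pi_* S_i)$ is preserved up to isomorphism over $k_i$.
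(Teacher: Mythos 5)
Your proof is correct and follows essentially the same route as the paper's own (very terse) proof, which simply cites Proposition \ref{geom} together with the exact sequence $0\to H^1(k,T)\to\prod_{i=1}^t \Br(k_i)\to\Br(E)$. You have usefully spelled out the Shapiro-lemma identification $H^2(G,S)\cong\prod_i\Br(k_i)$ and the fact that $\chi_i'$ picks out the $i$-th factor, which is exactly the content the paper leaves implicit.
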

\begin{proof}
The result follows from Proposition \ref{geom} and the exact sequence 
\[0\to H^1(k,T)\to\prod_{i=1}^t \Br(k_i)\to\Br(E).\]
\end{proof}
\begin{rem}
Families \hyperlink{i'}{(i)}\hyperlink{ii'}{(ii)}\hyperlink{iii'}{(iii)} and their blow-ups have permutation Picard groups.
\end{rem}
$\bf{\hyperlink{ii''}{(ii)}}$: $X=\text{SB}(A)$ is a Severi-Brauer variety of dimension $n$, $\Aut_{\Sigma}(X_l)=S_{n+1}$. We have
\begin{equation*}
1\to \G_{m,k}\to \R_{E/k}(\G_{m,E})\to T\to 1
\end{equation*}
which induces
\begin{equation*}
0\to H^1(G,T)\overset{\delta}{\to} \Br(k)\to \Br(E).
\end{equation*}
Then $\delta(U)=[A]$ and $A$ splits over $E$, see \cite[Example 8.5]{mp}.

$\bf{\hyperlink{i''}{(i)}}$: $X_l=F_a, a\geqslant 2, \Aut_{\Sigma}=S_2$ and $E$ factors as $k\times F\times k$ where $F$ is the quadratic \'{e}tale $k$-algebra corresponding to $[\rho']\in H^1(G,S_2)$. We have
\begin{equation*}
1\to \G_{m,k}\to \G_{m,k}\times \R_{F/k}(\G_{m,F})\to T\to 1
\end{equation*}
where $\G_{m,k}\to\G_{m,k}$ is the $a$-th power homomorphism. It induces
\begin{equation*}
0\to H^1(G,T)\overset{\delta}{\to} \Br(k)\to \Br(k)\times \Br(F)
\end{equation*}
where $[U]\mapsto [Q]\mapsto ([Q^{\otimes a}],[Q_F])$. By Lemma \ref{cb}, the toric surface $X$ is a $\P^1$-bundle over some conic curve $C$. We have the torus of $C$ is $T'=\R_{F/k}(\G_{m,F})/\G_{m,k}$. There is a commutative diagram with exact rows:
\begin{center}
\begin{tikzcd}
1 \arrow{r} & \G_{m,k} \arrow{r}\arrow[equal]{d} & \G_{m,k}\times \R_{F/k}(\G_{m,F}) \arrow{r}\arrow{d} & T \arrow{r}\arrow{d}{h} & 1\\
1 \arrow{r} & \G_{m,k}\arrow{r} & \R_{F/k}(\G_{m,F})\arrow{r} & T' \arrow{r} & 1\\
\end{tikzcd}
\end{center}
Hence, the image of $[U]$ under $\delta\circ h_*: H^1(G,T)\to H^1(G, T')\to \Br(k)$ is $[Q]$, and thus $C=\text{SB}(Q)$. Since a quaternion algebra has a period at most $2$ in the Brauer group, if $a$ is odd, then $[Q^{\otimes a}]\in\Br(k)$ being trivial implies that $Q=\M_2(k)$. Thus we have
\begin{prop}
Let $X$ be a toric surface that is a form of $F_{2a+1}$. Then $X\cong F_{2a+1}$.
\end{prop}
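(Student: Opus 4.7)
The proposition is essentially a direct consequence of the analysis carried out in the paragraph preceding it. My plan is to specialize the exact sequence already in hand to $a$ odd, exploit that quaternion algebras have $2$-torsion Brauer class, and conclude that the open orbit torsor is trivial, whence $X$ has a rational point and Lemma \ref{cb} finishes the job.

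First, I would instantiate the exact sequence
\begin{equation*}
0\to H^1(G,T)\overset{\delta}{\to}\Br(k)\to\Br(k)\times\Br(F)
\end{equation*}
with $a$ replaced by $2a+1$. By the discussion above the proposition, the first map sends $[U]$ to the Brauer class $[Q]$ of the quaternion algebra attached to the conic $C$, and the second map is $[Q]\mapsto([Q^{\otimes(2a+1)}],[Q_F])$. Exactness forces $[Q^{\otimes(2a+1)}]=0$ in $\Br(k)$. Since $Q$ is a quaternion algebra, $[Q]$ has order dividing $2$ in $\Br(k)$, so
\begin{equation*}
[Q^{\otimes(2a+1)}]=[Q]^{2a+1}=[Q].
\end{equation*}
Hence $[Q]=0$, which is exactly the implication the authors already extracted in the sentence immediately preceding the proposition.

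Next, because $\delta$ is injective (Hilbert 90 kills $H^1(G,V)$ as noted in the text), $[Q]=\delta([U])=0$ forces $[U]=0$ in $H^1(k,T)$; that is, the principal homogeneous space $U$ under $T$ is trivial. By the description of $X$ in terms of its associated toric $T$-model $X^*$ and the class $[U]\in H^1(k,T)$ recalled in \S\ref{tv}, a trivial $U$ means $X$ coincides with the toric $T$-model $X^*$, which carries the open orbit $T\hookrightarrow X$ and hence has a rational point.

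Finally, invoking Lemma \ref{cb} with the index $2a+1\geqslant 1$: $X$ is a form of $F_{2a+1}$ with a rational point, therefore $X\cong F_{2a+1}$. No step is genuinely hard here; the only nontrivial ingredient is the period-$2$ property of quaternion classes combined with the parity of $2a+1$, and that is precisely the mechanism built into the preceding exact sequence. The main care needed is just to confirm that $\delta$ is injective (so that $[Q]=0\Rightarrow[U]=0$) and that triviality of $U$ is exactly what one needs to apply the rational-point hypothesis of Lemma \ref{cb}.
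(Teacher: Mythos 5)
Your argument is correct and is essentially the same as the paper's: the paper observes that the period-$2$ property of the quaternion class $[Q]$ together with the odd exponent forces $[Q^{\otimes(2a+1)}]=[Q]=0$, hence $Q=\M_2(k)$, and then says ``thus we have'' the proposition. You have merely made explicit the two short steps the paper leaves implicit, namely that injectivity of $\delta$ (from $H^1(G,V)=0$ by Hilbert 90) gives $[U]=0$, so $X$ is its own toric model and has a rational point, whence Lemma \ref{cb} gives $X\cong F_{2a+1}$.
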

\begin{rem}
Iskovskih showed that any form of $F_{2a+1}$ is trivial \cite[Theorem 3(2)]{isksurf}. The above proposition reproves this result in the case of toric surfaces.
\end{rem}
$\bf{\hyperlink{iii''}{(iii)}}$: Let $X_l=\P^1\times\P^1, \Aut_{\Sigma}=D_8$. In this case, the map $\beta: G\to\GL(2,\Z)$ factors through $\gamma:G\to S_2$ where $S_2$ permutes $\O(1,0)$ and $\O(0,1)$. Then the quadratic \'{e}tale algebra $K$ corresponds to $\gamma$. We have
\begin{equation*}
1\to \R_{K/k}(\G_{m,K})\to \R_{E/k}(\G_{m,E})\to T\to 1
\end{equation*}
which induces
\begin{equation*}
0\to H^1(G,T)\overset{\delta}{\to} \Br(K)\to \Br(E).
\end{equation*}
Then $\delta(U)=[B]$ and $B$ splits over $E$. Let $\text{N}_{K/k}:\R_{K/k}(\G_{m,K})\to \G_{m,k}$ be the norm map which induces $\text{cor}_{K/k}: \Br(K)\to\Br(k)$. Then $[A]=\text{cor}_{K/k}[B]$.
\subsection{Separable algebras for toric surfaces} 
Let $X$ be a smooth projective toric $T$-surface over $k$ that splits over $l$ and $G=\Gal(l/k)$. Recall that we have a finite chain of blow-ups of toric $T$-surfaces
\begin{equation*}
X=X_0\to X_1\to\cdots\to X_n=X'
\end{equation*}
where $X'$ is minimal. For $1\leqslant i\leqslant n$, let $f_i: (X_{i-1})_l\to (X_i)_l$ which are the blow-ups of $G$-sets of disjoint $T_l$-invariant points. Let $E_i$ be the $G$-sets of the exceptional divisors of $f_i$ and $X'\cong B$ in $\C$.
\begin{prop}
$X\cong B\times \prod_{i=1}^n \Hom_G(E_i, l)$ in $\C$.
\end{prop}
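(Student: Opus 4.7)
The plan is to apply Theorem \ref{basis} directly to $X$ using a permutation $G$-basis of $K_0(X_l)$ tailored to the blow-up chain. Since $X'\cong B$ in $\C$, Lemma \ref{lb} gives a permutation $G$-basis $P'$ of $K_0(X'_l)$ consisting of line bundles. Iterating the blow-up description of $K_0$ recalled just before Theorem \ref{K0-surface}, we obtain a permutation $G$-basis
\[ P = \{\,g_l^*s : s\in P'\,\} \cup \bigcup_{i=1}^{n} \{\,\O_{X_l}(\tilde E_{i,j})\,\}_{j} \]
of $K_0(X_l)$ consisting of line bundles, where $g: X\to X'$ denotes the composite blow-down (with $g_l$ its base change to $l$) and $\tilde E_{i,j}$ is the total transform to $X_l$ of the $j$-th exceptional divisor of $f_i$. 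The $G$-orbits of $P$ split cleanly into orbits coming from $P'$ together with, for each $i$, $G$-orbits of the set $E_i$.

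Theorem \ref{basis} identifies $X$ in $\C$ with the product $\prod_\alpha \End_{\O_X}(\pi_*S_\alpha)$ over orbit representatives $S_\alpha\in P$. For a representative $g_l^*s$ coming from $P'$, the square formed by $g_l,\,g,\,\pi,\,\pi_{X'}$ is Cartesian with the vertical maps étale, so flat base change gives $\pi_*(g_l^*s)\cong g^*\pi_{X'*}s$. Because $\pi_{X'*}s$ is locally free, the endomorphism sheaf pulls back as $g^*\mathcal{E}nd_{\O_{X'}}(\pi_{X'*}s)$; taking global sections via the projection formula combined with $g_*\O_X=\O_{X'}$ (which holds for iterated blow-ups of reduced zero-dimensional smooth centers in smooth surfaces) yields $\End_{\O_X}(\pi_*g_l^*s)\cong\End_{\O_{X'}}(\pi_{X'*}s)$. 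The contribution from pullback orbits is therefore $\prod_\alpha \End_{\O_{X'}}(\pi_{X'*}s_\alpha)$, which by Theorem \ref{basis} applied to $X'$ with the basis $P'$ is isomorphic to $X'\cong B$ in $\C$.

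For a stage-$i$ representative $\tilde E_{i,j}$ with stabilizer $\Gal(l/\kappa)$, where $\kappa=\kappa_{i,j}$ is the residue field of the corresponding closed point of the blow-up center $Y_i\subset X_i$, that point is $\kappa$-rational, so the exceptional divisor descends to $X_{i-1,\kappa}$ and its total transform to a divisor on $X_\kappa$; hence $\O_{X_l}(\tilde E_{i,j})\cong\pi_\kappa^*L$ for some $L\in\Pic(X_\kappa)$. The endomorphism comparison at the end of the proof of Theorem \ref{basis} gives $\End_{\O_X}(\pi_*\pi_\kappa^*L)\cong\End_{\O_{X_\kappa}}(\pi_{\kappa*}\pi_\kappa^*L)=\End_{\O_{X_\kappa}}(L\otimes_\kappa l)$, and locally trivializing $L$ shows this algebra is $M_{[l:\kappa]}(\kappa)$ (using $\End_{\O_{X_\kappa}}(L)=\Gamma(X_\kappa,\O_{X_\kappa})=\kappa$). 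This matrix algebra is trivial in $\Br(\kappa)$, hence isomorphic to $\kappa$ in $\C$. Taking the product over $G$-orbits of $E_i$ gives $\prod_j\kappa_{i,j}\cong\Hom_G(E_i,l)$, and assembling the pullback and exceptional contributions yields $X\cong B\times\prod_{i=1}^n\Hom_G(E_i,l)$ in $\C$.

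The main technical point is the Brauer-triviality of the exceptional endomorphism algebras; it rests on two ingredients: the fact that each blow-up center $Y_i$ is a reduced étale $k$-subscheme whose residue fields are precisely the $\kappa_{i,j}$, forcing the exceptional divisors to descend to rational divisors over these fields, and the identification $\End_{\O_X}(\pi_*\pi_\kappa^*L)\cong\End_{\O_{X_\kappa}}(\pi_{\kappa*}\pi_\kappa^*L)$ imported from the proof of Theorem \ref{basis}.
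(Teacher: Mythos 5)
Your proof is correct and follows the paper's overall strategy: transport a permutation $G$-basis of line bundles from $K_0(X'_l)$ to $K_0(X_l)$ via the blow-up description of $K_0$, apply Theorem \ref{basis}, and identify the resulting endomorphism algebras orbit by orbit. The pullback computation $\End_{\O_X}(\pi_*g_l^*s)\cong\End_{\O_{X'}}(\pi_{X'*}s)$ via flat base change, the projection formula and $g_*\O_X=\O_{X'}$ is the same in substance as the paper's chain of adjunction isomorphisms. The genuine difference is in the exceptional orbits: you descend each $\O_{X_l}(\tilde E_{i,j})$ to a line bundle $L$ over $X_{\kappa}$ by the geometric observation that a closed point of the blow-up centre with residue field $\kappa$ acquires a $\kappa$-rational point after base change to $\kappa$, then compute $\End_{\O_X}(\pi_*\pi_\kappa^*L)\cong\End_{\O_{X_\kappa}}(L\otimes_\kappa l)\cong \M_{[l:\kappa]}(\kappa)$, which is split. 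The paper instead descends the whole direct sum $\bigoplus_j\O(E_j)$ to a locally free sheaf $Q$ over $X$, applies Lemma \ref{vanish} to kill the off-diagonal Homs so that $\End_{\O_X}(Q)\cong\Hom_G(E_i,l)$, and notes this is Brauer equivalent to $\End_{\O_X}(\pi_*\O(E_j))$. Both routes land on the same split algebra in $\C$; yours is an explicit single-representative computation that avoids Lemma \ref{vanish} at the cost of tracking residue fields through the chain, while the paper's packages the orbit at once and produces $\Hom_G(E_i,l)$ directly. One small wobble: you invoke Lemma \ref{lb} to get the permutation basis $P'$ on $X'_l$, but that lemma is phrased for toric models; since $X'$ is a minimal smooth projective toric surface it is cleaner simply to cite Theorem \ref{K0-surface}.
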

\begin{proof}
We only need to consider the simple case: Let $f: Y\to Z$ be a blow-up of toric $T$-surfaces  and let $E=\{P_j\}$ be the $G$-set of line bundles associated to the exceptional divisors of $g=f_l$. We assume further that the $G$-action on $E$ is transitive. 

Let $p:Y_l\to Y$ and $q: Z_l\to Z$ be the projections. Then we have a commutative diagram:
\begin{center}
\begin{tikzcd}
Y_l \arrow{r}{g} \arrow{d}{p}
& Z_l \arrow{d}{q}\\
Y \arrow{r}{f} & Z
\end{tikzcd}
\end{center}

Recall that if $K_0(Z_l)$ has a $G$-basis $\gamma$, then $g^*(\gamma)\cup E$ is a $G$-basis of $K_0(Y_l)$. Since $Z$ is a toric surface, we can assume $\gamma$ consists of line bundles over $Z_l$. Let $P \in\gamma$. Then
\begin{equation*}
\End_{\O_Y}(p_*g^*P)\cong \End_{\O_Y}(f^*q_*P)\cong \Hom_{\O_Z}(q_*P,f_*f^*(q_*P))\cong \End_{\O_Z}(q_*P)
\end{equation*}
where $f_*f^*$ is identity because $f$ is flat proper and $f_*\O_Y=\O_Z$.

As for the $G$-orbit $E$, we have $\bigoplus_j P_j=p^*Q$ for some locally free sheaf $Q$ on $Y$. By Lemma \ref{vanish} and the assumption that $G$ acts transitively on $E$, we have $\End_{\O_Y}(Q)\cong \Hom_G(E,l)$. It is Brauer equivalent to $\End_{\O_Y}(p_*P_j)$ for any $P_j\in E$. Thus the result follows from Theorem \ref{basis}.
\end{proof}
\section{Derived categories of toric surfaces}\label{dcat}
Let $X$ be a smooth projective variety over $k$ and let $D^b(X)$ be the bounded derived category of coherent sheaves on $X$. We will define exceptional objects and collections in a generalized way.
\begin{defn}
Let $A$ be a finite simple $k$-algebra. An object $V$ in $D=D^b(X)$ is called \textit{$A$-exceptional} if $\Hom_D (V,V)=A$ and $\Ext^i_D(V,V)=0$ for $i\neq 0$.
\end{defn}
\begin{defn}
A set of objects $\{V_1,\dots, V_n\}$ in $D=D^b(X)$ is called an \textit{exceptional collection} if for each $1\leqslant i\leqslant n$, the object $V_i$ is $A_i$-exceptional for some finite simple $k$-algebra $A_i$, and $\Ext^r_D(V_i,V_j)=0$ for any integer $r$ and $i>j$. The collection is \textit{full} if the thick triangulated subcategory $\<V_1,\dots, V_n\>$ generated by the $V_i$ is equivalent to $D^b(X)$.
\end{defn}
\begin{defn}
A set of objects $\{V_1,\dots, V_n\}$ in $D\in D^b(X)$ is called an \textit{exceptional block} if it is an exceptional collection and $\Ext^r_D(V_i,V_j)=0$ for any integer $r$ and $i\neq j$. Note that the ordering of the $V_i$ in this case does not matter.
\end{defn}
Assume $\{V_1,\dots, V_n\}$ is a full exceptional collection as above. Since $\<V_i\>$ is equivalent to $D^b(A_i)$, the bounded derived category of right $A_i$-modules, we have semiorthogonal decompositions $D^b(X)=\<V_1,\dots,V_n\>=\<D^b(A_1),\dots, D^b(A_n)\>$.

The semiorthogonal decomposition of $D^b(X)$ can be lifted to the world of dg categories. For details about dg categories, see \cite{dgcat}. There is a dg enhancement of $D^b(X)$, denoted as $D^b_{dg}(X)$ where $D^b_{dg}(X)$ is the dg category with same objects as $D^b(X)$ and whose morphisms have a dg $k$-module structure such that $H^0(\Hom_{D^b_{dg}(X)}(x,y))=\Hom_{D^b(X)}(x,y)$. Let $\perf_{dg}(X)$ be the dg subcategory of perfect complexes. Since $X$ is smooth projective, $\perf_{dg}(X)$ is quasi-equivalent to $D^b_{dg}(X)$. For an $A$-exceptional object $V$, the pretriangulated dg subcategory $\<V\>_{dg}$ generated by $V$ is quasi-equivalent to $D^b_{dg}(A)$. Therefore, there is a dg enhancement of the semiorthogonal decomposition $D^b_{dg}(X)=\<V_1,\dots, V_n\>_{dg}$, which is quasi-equivalent to $\<D^b_{dg}(A_1),\dots,D^b_{dg}(A_n)\>_{dg}$. 

Let $dgcat$ be the category of all small dg categories. There is a universal additive functor $U: dgcat\to Hmo_0$ where $Hmo_0$ is the category of noncommutative motives, see \cite[\S2.1-2.4]{noncomm}. We have $U(\perf_{dg}(X))\simeq\bigoplus_{i=1}^n U(D^b_{dg}(A_i))\simeq\bigoplus_{i=1}^n U(A_i)$. On the other hand, the motivic category $\C$ is a full subcategory of $Hmo_0$ by sending a pair $(X,A)$ to $\perf_{dg}(X,A)$, the dg category of complexes of right $\O_X\otimes_k A$-modules which are also perfect complexes of $\O_X$-modules \cite[Theorem 6.10]{tamp} or \cite[Theorem 4.17]{noncomm}. The above discussion gives the following well-known fact:
\begin{thm}\label{derivedtomotivic}
Let $X$ be a smooth projective variety over $k$. If $D^b(X)$ has a full exceptional collection of objects $\{V_1,\dots,V_n\}$ where each $V_i$ is $A_i$-exceptional, then $X\cong\prod_{i=1}^n A_i$ in the motivic category $\C$.
\end{thm}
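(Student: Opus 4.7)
The plan is to transport the semiorthogonal decomposition of $D^b(X)$ first to the dg world and then to noncommutative motives, where it becomes a direct sum, and finally to pull the resulting isomorphism back to $\C$ via the fully faithful embedding $\C\hookrightarrow Hmo_0$.

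First, I would pass from the full exceptional collection $\{V_1,\dots,V_n\}$ to its dg enhancement. Since $X$ is smooth and projective, $D^b_{dg}(X)$ is quasi-equivalent to $\perf_{dg}(X)$, and for each $A_i$-exceptional object $V_i$, the pretriangulated dg subcategory $\<V_i\>_{dg}$ is quasi-equivalent to $\perf_{dg}(A_i)$ (which for a finite simple $k$-algebra agrees with $D^b_{dg}(A_i)$). Hence fullness of the exceptional collection upgrades to a semiorthogonal decomposition
\begin{equation*}
\perf_{dg}(X)\simeq \<\perf_{dg}(A_1),\dots,\perf_{dg}(A_n)\>_{dg}
\end{equation*}
in the sense of dg categories.

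Next I would apply the universal additive invariant $U:dgcat\to Hmo_0$. By construction of $Hmo_0$, $U$ sends semiorthogonal decompositions of small pretriangulated dg categories to direct sums, so the decomposition above yields
\begin{equation*}
U(\perf_{dg}(X))\simeq \bigoplus_{i=1}^n U(\perf_{dg}(A_i))\simeq \bigoplus_{i=1}^n U(A_i)
\end{equation*}
in $Hmo_0$. Using the fact (cited in the excerpt from Tabuada's work) that the motivic category $\C$ is a full subcategory of $Hmo_0$ via $(X,A)\mapsto \perf_{dg}(X,A)$, the object $X=(X,k)$ corresponds to $U(\perf_{dg}(X))$ and each $A_i=(\Spec k,A_i)$ corresponds to $U(A_i)$. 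Since the embedding is additive, the finite direct sum on the right is identified with the product $\prod_{i=1}^n A_i$ taken in $\C$. Because the embedding is full and faithful, the isomorphism exhibited in $Hmo_0$ descends to an isomorphism $X\cong \prod_{i=1}^n A_i$ in $\C$.

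The one point that needs care, and which I expect to be the main technical obstacle, is the compatibility between the abstract direct sum in $Hmo_0$ and the product structure in $\C$: one must check that the morphism realising the isomorphism at the level of $Hmo_0$ lies in the image of the subcategory $\C$, i.e.\ that it is represented by classes in $K_0(X\times \Spec k, A_i)=K_0(X,A_i)$, and conversely that these classes assemble into the map $U(\perf_{dg}(X))\to \bigoplus_i U(A_i)$. This follows from the explicit description of Hom-sets in $Hmo_0$ between objects of the form $\perf_{dg}(Y,B)$ in terms of $K_0$ of bimodule categories, which is exactly how the embedding $\C\hookrightarrow Hmo_0$ is set up in \cite{tamp}; invoking that identification closes the argument.
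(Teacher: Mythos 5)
Your proof follows exactly the paper's route: lift the semiorthogonal decomposition to a dg enhancement, apply the universal additive invariant $U:dgcat\to Hmo_0$ to turn it into a direct sum of the $U(A_i)$, and then descend via Tabuada's fully faithful embedding $\C\hookrightarrow Hmo_0$. The caveat you raise at the end is already resolved by the cited fullness statement (any isomorphism in $Hmo_0$ between objects in the essential image of $\C$ pulls back along a full and faithful functor), so nothing further is needed.
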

We know for toric varieties satisfying the conditions of Theorem \ref{basis}, they have a complete motivic decomposition into central simple algebras. The following lemma gives a criterion when the motivic decomposition can be lifted to the decomposition of the derived category (i.e, the reverse of Theorem \ref{derivedtomotivic}):
\begin{lem}\label{ordering}
Let $X$ be a smooth projective toric variety over $k$ that splits over $l$ and $G=\Gal(l/k)$. Assume $K_0(X_l)$ has a permutation $G$-basis $P$ of line bundles over $X_l$. Let $\{P_i\}_{i=1}^t$ be $G$-orbits of $P$ and let $\pi: X_l\to X$ be the projection. Assume each $G$-orbit $P_i$ is an exceptional block. If there is an ordering for $G$-orbits $\{P_i\}_{i=1}^t$ such that $\{P_1,\dots, P_t\}$ gives a full exceptional collection of $D^b(X_l)$, then for any $S_i\in P_i$, the set $\{\pi_*S_1, \dots, \pi_*S_t\}$ is a full exceptional collection of $D^b(X)$.
\end{lem}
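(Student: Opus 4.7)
The plan is to reduce every $\Ext$ computation on $X$ to one on $X_l$ via faithfully flat descent along $\pi$, and then read off the exceptional-collection conditions directly from the hypotheses on $P$. Since $\pi$ is the base change of the affine flat map $\Spec l \to \Spec k$, it is affine and faithfully flat, and both $\pi^*$ and $R\pi_* = \pi_*$ are exact. The canonical isomorphism $l \otimes_k l \cong \prod_{g \in G} l$, combined with flat base change along the cartesian square $X_l \times_X X_l \rightrightarrows X_l \to X$, yields
\[
\pi^* \pi_* F \;\cong\; \bigoplus_{g \in G} g^* F
\]
in $D^b(X_l)$ for any $F \in D^b(X_l)$, where $g^*$ denotes pullback along the Galois action. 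Faithful flatness of $l/k$ also provides the $\Ext$ base-change formula $\Ext^r_X(\mathcal{F},\mathcal{G}) \otimes_k l \cong \Ext^r_{X_l}(\pi^*\mathcal{F}, \pi^*\mathcal{G})$ for coherent sheaves on the smooth projective $X$.

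Putting these together gives
\[
\Ext^r_X(\pi_* S_i, \pi_* S_j) \otimes_k l \;\cong\; \bigoplus_{g,h \in G} \Ext^r_{X_l}(g^* S_i,\, h^* S_j),
\]
and each summand pairs an element of $P_i$ with an element of $P_j$. For $i > j$, the hypothesis that $\{P_1,\dots,P_t\}$ is a (full) exceptional collection forces every summand to vanish, so $\Ext^r_X(\pi_* S_i,\pi_* S_j) = 0$ by faithful flatness. For $i = j$ and $r > 0$, the exceptional-block hypothesis on $P_i$ gives vanishing both when $g^* S_i = h^* S_i$ (including $g = h$, where it reduces to $H^r(X_l, \mathcal{O}_{X_l}) = 0$) and when the two line bundles differ, hence $\Ext^r_X(\pi_* S_i, \pi_* S_i) = 0$. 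Finally, $\End_{\mathcal{O}_X}(\pi_* S_i) = B_i$ is a finite simple $k$-algebra, namely the $i$-th factor of the separable algebra $B$ constructed in Theorem \ref{basis}, so each $\pi_* S_i$ is $B_i$-exceptional in the sense defined earlier.

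For fullness, suppose $F \in D^b(X)$ lies in the right orthogonal of $\{\pi_* S_1,\dots,\pi_* S_t\}$. The same base-change identity, combined with the decomposition of $\pi^* \pi_* S_i$, yields $\Ext^r_{X_l}(S, \pi^* F) = 0$ for every $S \in P$ and every $r$. Since $P = \bigcup_i P_i$ is a full exceptional collection of $D^b(X_l)$, this forces $\pi^* F = 0$, and faithful flatness of $\pi$ then gives $F = 0$; thus the thick triangulated subcategory generated by $\{\pi_* S_1,\dots, \pi_* S_t\}$ is all of $D^b(X)$. The main technical point is verifying the formula $\pi^* \pi_* F \cong \bigoplus_g g^* F$ and the $\Ext$ base-change identity in the derived setting; once these standard ingredients are in place, the rest of the argument is a direct transport of the hypotheses across $\pi$.
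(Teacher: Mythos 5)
Your proof is correct and follows essentially the same approach as the paper's: the exceptionality and semiorthogonality are established by the identical base-change computation $\Ext^r_X(\pi_*S_i,\pi_*S_j)\otimes_k l \cong \bigoplus_{g,h\in G}\Ext^r_{X_l}(g^*S_i, h^*S_j)$. For fullness the paper simply invokes a descent lemma for semiorthogonal decompositions \cite[Lemma 2.3]{aberdel} once the subcategories $\langle\pi_*S_i\rangle$ are known to be admissible, whereas you spell out the special case directly (vanishing of the right orthogonal via $\pi^*F=0$ and faithful flatness); the two arguments rest on the same ingredients and your version is just more self-contained.
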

\begin{proof}
First we show that $\{\pi_*S_1,\dots, \pi_*S_t\}$ is an exceptional collection. Since $\pi$ is flat and finite, both $\pi^*: D^b(X)\to D^b(X_l)$ and $\pi_*: D^b(X_l)\to D^b(X)$ are exact functors. The result follows from $\Ext^r_{D^b(X)}(\pi_*S_i, \pi_* S_j)\otimes_k l\cong \Ext^r_{D^b(X_l)}(\pi^*\pi_*S_i, \pi^*\pi_*S_j)\cong \bigoplus_{g, g'\in G}\Ext^r_{D^b(X_l)}(gS_i, g'S_j)$. In particular, $\pi_*S_i$ is an exceptional object and thus $\<\pi_*S_i\>$ is an admissible subcategory of $D^b(X)$. Since $\<\pi_*S_i\otimes_k l\>=\<P_i\>$ and $D^b(X_l)=\<P_1,\dots, P_t\>$, by \cite[Lemma 2.3]{aberdel}, we have $D^b(X) = \< \pi_*S_1, \dots, \pi_*S_t\>$.
\end{proof}
Using the classification of toric surfaces, we can confirm the lifting for toric surfaces:
\begin{thm}\label{dersurf}
Let $X$ be a smooth projective toric surface over $k$ that splits over $l$ and $G=\Gal(l/k)$. Then $K_0(X_l)$ has a permutation $G$-basis $P$ of line bundles over $X_l$ such that each $G$-orbit is an exceptional block. Furthermore, there exists an ordering of the $G$-orbits $\{P_i\}_{i=1}^t$ of $P$ such that $\{P_1,\dots, P_t\}$ gives a full exceptional collection of $D^b(X_l)$. Therefore, for any $S_i\in P_i$, $\{\pi_*S_1$, $\dots, \pi_*S_t\}$ is a full exceptional collection of $D^b(X)$ where $\pi: X_l\to X$ is the projection.
\end{thm}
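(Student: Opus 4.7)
The plan is to reduce the statement to the minimal toric surfaces classified in Theorem \ref{ms} and then verify the required properties for each of the four minimal families. Once we have produced, on $X_l$, a permutation $G$-basis of line bundles whose $G$-orbits are exceptional blocks and which can be ordered to form a full exceptional collection, the descent of this collection to a full exceptional collection of $D^b(X)$ is immediate from Lemma \ref{ordering}.

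First I would set up the inductive step. Writing $X = X_0 \to X_1 \to \cdots \to X_n = X'$ as in Section \ref{mts}, each arrow $f_i$ is the blow-up of $(X_i)_l$ along a $G$-invariant disjoint union of $T_l$-fixed points. Orlov's blow-up formula produces a semiorthogonal decomposition
\[
D^b((X_{i-1})_l) \;=\; \bigl\langle f_i^* D^b((X_i)_l),\; \O_{E^{(i)}_1}(-1),\ldots,\O_{E^{(i)}_{m_i}}(-1) \bigr\rangle,
\]
and since the exceptional divisors $E^{(i)}_j$ are pairwise disjoint, the set $\{\O_{E^{(i)}_j}(-1)\}_j$ is completely $\Ext$-orthogonal; hence every $G$-orbit among them is automatically an exceptional block. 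The relation $\O_{E^{(i)}_j}(-1) = \O_{(X_{i-1})_l}(E^{(i)}_j) - \O_{(X_{i-1})_l}$ in $K_0$ (cf.\ Section \ref{k0s}) shows that, after shifting by the pullback of the basis on $(X_i)_l$, the line bundles $\O(E^{(i)}_j)$ can be taken as the new basis elements, preserving the line-bundle form. Thus it suffices to treat the minimal surfaces; both the orbit-block property and the existence of an ordering propagate through blow-ups.

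Next I would go through the four minimal cases, recycling the explicit $G$-bases exhibited in the proof of Theorem \ref{K0-surface}. In case \textbf{(i)} $X_l \cong F_a$ with $a \geq 2$, the basis $\{1, J_1, J_2, J_1J_2\}$ coincides with the standard projective-bundle exceptional collection, and the $S_2$-action fixes each element in $K_0$, so all orbits are singletons and the block condition is trivial; ordering by degree yields a full exceptional collection. In case \textbf{(ii)} $X_l \cong \P^2$, the basis $\{1, J_1, J_1^2\} = \{\O, \O(-1), \O(-2)\}$ is Beilinson's collection and again all orbits are singletons. In case \textbf{(iii)} $X_l \cong \P^1 \times \P^1$, the basis $\{1, J_1, J_2, J_1J_2\} = \{\O, \O(-1,0), \O(0,-1), \O(-1,-1)\}$ is the standard full exceptional collection; the only non-singleton orbit (under the subgroup of $D_8$ that swaps the two $\P^1$-factors) is $\{J_1, J_2\}$, and by K\"unneth
\[
\Ext^r(\O(-1,0),\O(0,-1)) \;=\; \bigoplus_{p+q=r} H^p(\P^1,\O(1))\otimes H^q(\P^1,\O(-1)) \;=\; 0
\]
for every $r$, so this orbit is indeed an exceptional block. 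In case \textbf{(iv)} $X_l \cong \Bl_{p_1,p_2,p_3}\P^2$, the basis $\{1, R_1, R_2, R_3, Q_1, Q_2\}$ is exactly the basis used by Blunk in \cite[Theorem 4.2]{mb}, where the required exceptional collection and block decomposition are already established.

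In every case I then order the $G$-orbits so that the resulting sequence $\{P_1,\ldots,P_t\}$ is a full exceptional collection of $D^b(X_l)$, and apply Lemma \ref{ordering} to push forward to an exceptional collection $\{\pi_*S_1,\ldots,\pi_*S_t\}$ in $D^b(X)$. The step I expect to be the most delicate is the verification of the orbit-block property in case (iv), where the orbits $\{R_1,R_2,R_3\}$ and $\{Q_1,Q_2\}$ are nontrivial and the $\Ext$-vanishing between elements within an orbit is not as immediate as in the other cases; however this is exactly the content of Blunk's computation, so I would cite it rather than reprove it. A secondary technical point is checking that the ordering at each blow-up step (inserting the new exceptional block of $\O_{E_j}(-1)$'s) is compatible with an existing full exceptional collection on the base; this is guaranteed by Orlov's formula.
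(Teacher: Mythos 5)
Your overall strategy mirrors the paper's exactly: reduce to the minimal surfaces classified in Theorem~\ref{ms}, exhibit explicit exceptional collections of line bundles there, and then invoke Lemma~\ref{ordering} to descend. But there is a genuine gap in your treatment of the blow-up step, and a couple of the explicit orderings you give for the minimal cases are wrong as stated.

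The gap: you pass from Orlov's decomposition $D^b((X_{i-1})_l) = \langle \O_{E_1}(-1),\dots,\O_{E_m}(-1), f_i^* D^b((X_i)_l)\rangle$ to a collection of \emph{line bundles} by invoking only the $K_0$-identity $\O_{E_j}(-1) = \O(E_j) - \O$. That identity tells you $\{\O(E_j)\}$ together with the pullback basis is a $\Z$-basis of $K_0$, but it says nothing about Ext-vanishing, so it does not produce an exceptional collection; ``preserving the line-bundle form'' is exactly the point that needs proof. The paper fills this with a concrete mutation: the minimal-surface collections all begin with $\O$, and the right mutation of the exceptional pair $(\O_{E_j}(-1), \O)$ is $(\O, \O(E_j))$ (the extension case of \cite[Proposition 2.3]{karnog}), so one can replace the torsion sheaves $\O_{E_j}(-1)$ by the line bundles $\O(E_j)$ after moving $\O$ past them, and mutations preserve both fullness and the block structure of the orbit $\{\O(E_j)\}_j$. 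Without some such mutation argument your inductive step is incomplete.

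Secondary points worth fixing: in cases (iii) and (iv) you list $\{\O,\O(-1,0),\O(0,-1),\O(-1,-1)\}$ and $\{1,R_1,R_2,R_3,Q_1,Q_2\}$ and call them ``the standard full exceptional collection,'' but in those orders they are not exceptional collections (for instance $\Hom(\O(-1,0),\O)\neq 0$). The paper uses the duals $J_i^\vee$, $R_i^\vee$, $Q_j^\vee$ (equivalently $\O(D_i)$), which do form exceptional collections in the order starting from $\O$; one must either pass to the duals or reverse the order. Also, for the del Pezzo case the citation to Blunk \cite[Theorem 4.2]{mb} establishes the permutation $K_0$-basis, but the statement that this basis underlies a full exceptional collection of $D^b$ is a separate fact, cited in the paper from \cite[Proposition 9.1]{aberdel} and \cite{bssdel}.
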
 
\begin{proof}
First assume that $X$ is minimal. By the classification of minimal toric surfaces (Theorem \ref{ms}),  we have $X_l$ is (i) $F_a, a\geqslant 2$; (ii) $\P^2$; (iii) $\P^1\times \P^1$; (iv) del Pezzo surface of degree $6$. Using the notation introduced in Theorem \ref{K0-surface}, the derived category $D^b(X_l)$ has the following full exceptional collections of line bundles:

\hyperlink{i'}{(i)} $\{\O, \O(D_1), \O(D_2), \O(D_1+D_2)\}$;

\hyperlink{ii'}{(ii)} $\{\O, \O(D_1), \O(2D_1)\}=\{\O, \O(1), \O(2)\}$;

\hyperlink{ii'i}{(iii)} $\{\O, \O(D_1), \O(D_2), \O(D_1+D_2)\}=\{\O, \O(1,0), \O(0,1), \O(1,1)\}$;

\hyperlink{iv'}{(iv)} $\{\O, R_1^{\vee}, R_2^{\vee}, R_3^{\vee}, Q_1^{\vee}, Q_2^{\vee}\}$ where $(-)^{\vee}$ is the dual of the invertible sheaf.

(i)-(iii) follow from the projective bundle theorem \cite[Theorem 2.6]{orlproj} and (iv) follows from \cite[Proposition 9.1]{aberdel} or \cite{bssdel}. Moreover, the collections $\{\O(1,0),\O(0,1)\}$, $\{R_i^{\vee}\}_{i=1}^3$ and $\{Q_j^{\vee}\}_{j=1}^2$ are exceptional blocks. These sets are the only $G$-orbits with more than one object. Therefore, each $G$-orbit is an exceptional block.

Now it suffices to consider the case that $f: X\to X'$ is a simple blow-up of a minimal toric surface $X'$, that is, the map $f_l: X_l\to X'_l$ is the blow-up of a $G$-set of disjoint torus invariant points of $X'_l$ where $G$ acts on the set transitively. Let $E_i$ be the exceptional divisors of $f_l$. Let $E$ be the set $\{\O_{E_i}(-1)\}$. By \cite[Theorem 4.3]{orlproj}, the derived category $D^b(X)$ has a full exceptional collection $\{E, L^\bullet f^*D^b(X')\}$. Note that the full exceptional collections of minimal toric surfaces provided above all have the structure sheaf $\O$ as the first object. The right mutation of the pair $(\O_{E_i}(-1), \O)$ is $(\O, \O(E_i))$ (the extension case in \cite[Proposition 2.3]{karnog}). Therefore, the right mutation of $\{E,\O\}$ is $\{\O, E'\}$ where $E'=\{\O(E_i)\}$. The $G$-orbit $E'$ is an exceptional block because the order in the set is exchangeable. Hence, $D^b(X_l)$ has a full exceptional collection $\{\O, E', \text{the rest of the line bundles provided above}\}$ (they form a basis of $K_0(X_l)$) and each $G$-orbit is an exceptional block.
\end{proof}

\bibliography{tsBib.bib}
\bibliographystyle{alpha}
\end{document}